\newcommand{\noopsort}[1]{}
\newcommand\myurl[1]{\url{#1}}
\newcommand\phdize[1]{Ph.D. thesis, #1}
\title{Constructing local $L$-packets for tame unitary groups}
\author{David Roe}
\email{roed.math@gmail.com}
\address{Department of Mathematics, University of Calgary, Calgary, AB T2N 1N4, Canada}
\subjclass[2010]{22E50, 11S37}
\keywords{Langlands, unitary groups}
\thanks{The author was supported by the Pacific Institute for the Mathematical Sciences while adapting this manuscript from his thesis}
\date{\today}
\begin{document}
\newtheorem{theorem}{Theorem}[section]
\newtheorem{proposition}[theorem]{Proposition}
\newtheorem{lemma}[theorem]{Lemma}
\newtheorem{corollary}[theorem]{Corollary}
\newtheorem{conjecture}[theorem]{Conjecture}
\newtheorem{construction}[theorem]{Construction}
\theoremstyle{definition}
\newtheorem{definition}[theorem]{Definition}
\newtheorem{remark}[theorem]{Remark}
\numberwithin{equation}{section}

\begin{abstract}
  \addcontentsline{toc}{section}{Abstract}
We generalize the work of DeBacker and Reeder \cite{reeder-debacker:09a} to the case of unitary groups split by a tame extension.  The approach is broadly similar and the restrictions on the parameter the same, but many of the details of the arguments differ.

Let $\G$ be a unitary group defined over a local field $\K$ and splitting over a tame extension $E/\K$.  Given a Langlands parameter $\varphi \colon \Weil_\K \rightarrow {^L\G}$ that is tame, discrete and regular, we give a natural construction of an $L$-packet $\Pi_\varphi$ associated to $\varphi$, consisting of representations of pure inner forms of $\G(\K)$ and parameterized by the characters of the finite abelian group $A_\varphi = \Z_{\hat{\G}}(\varphi)$.

\end{abstract}
\maketitle

\section{Introduction}

The local Langlands correspondence has enjoyed great success in recent years, with proofs for $\GL_n$ \cites{harris-taylor:01a,henniart:00a} and other classical groups \cite{arthur:EndoscopicClassification}.  However, these proofs do not give an explicit construction of the $L$-packet associated to a particular Langlands parameter.  A different approach, initiated by DeBacker-Reeder \cite{reeder-debacker:09a}, fills this gap at the cost of restricting the class of Langlands parameters appearing on one side of the correspondence.  In this paper we extend the constructions of DeBacker-Reeder to tamely ramified unitary groups.

\subsection*{The DeBacker-Reeder case}

Let $\K$ be a finite extension of $\Qp$ and suppose that $\G$ is a quasi-split, connected, reductive group defined over $\K$ and splitting over an \emph{unramified} extension $E/\K$.  Write $\Weil_\K$ for the Weil group of $K$, $\hat{\G}$ for the connected, reductive group over $\CC$ with root datum dual to that of $\G$ and ${}^L\G = \hat{\G} \rtimes \Gal(E/\K)$ for a Langlands dual group.  DeBacker and Reeder consider Langlands parameters $\varphi : \Weil_\K \to {}^L\G$ that are
\begin{enumerate}
\item \emph{tame}: $\varphi$ factors through the quotient of $\Weil_\K$ by wild inertia,
\item \emph{discrete}: the centralizer of $\varphi$ in $\hat{\G}$ is finite modulo the center of ${}^L\G$, and
\item \emph{regular}: the image of inertia is generated by a semisimple element of ${}^L\G$ whose centralizer in $\hat{G}$ is a maximal torus $\hat{\Ss}$.
\end{enumerate}
We summarize their construction here to highlight the similarities and differences with our version.

Suppose that $\lambda \in X^*(\hat{\Ss})$.  Given $\G, \varphi$ and $\lambda$ they construct pairs $(\pi_\lambda, \Fr_\lambda)$, where $\Fr_\lambda$ is a twist of Frobenius, $\G^{\Fr_\lambda}$ are the $\K$-points of the pure inner form of $\G$ determined by $\Fr_\lambda$, and $\pi_\lambda$ is a representation of $\G^{\Fr_\lambda}$.  They then define a notion of equivalence of such pairs and prove that the equivalence class of $(\pi_\lambda, \Fr_\lambda)$ depends only on the class of $\lambda$ in a finite quotient of $X^*(\hat{\Ss})$ isomorphic to $\Irr(A_\varphi)$.

The first step in the construction of $\pi_\lambda$ is the construction of a point $x_\lambda$ in the Bruhat-Tits building $\BB(\G)$ as the unique fixed point of a specific automorphism of the apartment $X^*(\hat{\Ss}) \otimes \RR$.  This point in the building determines a maximal compact subgroup $\G_\lambda$ used in the construction of $\pi_\lambda$.  From $x_\lambda$ they also obtain an unramified anisotropic maximal torus $\Ss_\lambda$ as a particular twist of a fixed maximal torus $\Ss \subset \G$.  

The image of $\varphi$ is contained within the normalizer of $\hat{\Ss}_{\lambda}$.  If the image were in fact a semidirect product, then the local Langlands correspondence for tori would give a character of $\Ss_{\lambda}(\K)$.  In general there is no such semidirect product decomposition of the image, but DeBacker and Reeder are able to modify $\varphi$ in a canonical way to obtain a new parameter whose image can be expressed as a semidirect product and thus defines a character on $\Ss_\lambda(\K)$.  They can then use Deligne-Lusztig theory to define a representation of the parahoric subgroup $\G_\lambda$, which compactly induces to the desired supercuspidal representation of $\G^{\Fr_\lambda}$. 

One benefit of the DeBacker-Reeder approach is that it explicitly constructs the representations in an $L$-packet from the data of a Langlands parameter.  It also works for a broad class of groups $\G$.

\subsection*{Expanding upon DeBacker-Reeder}

In this paper we will expand and modify the methods of DeBacker and Reeder to remove their condition that $\G$ splits over an unramified extension: this generalization constitutes our main result.  We will continue to assume that $\varphi$ factors through the tame Weil group, and this requirement forces $\G$ to split over a tamely ramified extension of $\K$.  While our methods should apply to tame, discrete, regular parameters for arbitrary $\G$, we focus here on the case of unitary groups as a concrete example.

Suppose that $\K$ is a finite extension of $\Qp$ with $p \ne 2$, $E / \K$ is a ramified quadratic extension of $\K$, $V$ is a Hermitian space over $E$, and $\G$ is the unitary group associated to $V$.  We require that $\varphi$ is tame, discrete and regular as in DeBacker-Reeder.

Any unitary group has a pure inner form that is quasi-split.  A Langlands parameter for $\G$ will yield representations of all pure inner forms of $\G$, so we may assume that $\G$ itself is quasi-split.  Let $\Ss$ be the centralizer of a maximal $\K$-split torus in $\G$; since $\G$ is quasi-split $\Ss$ will be a maximal torus defined over $\K$, uniquely determined up to conjugation within $\G(\K)$.  We use $\Ss$ in the construction of the Langlands dual group $\hat{\G}$, and thus $\hat{\G}$ comes equipped with a maximal torus $\hat{\Ss}$ over $\CC$ dual to $\Ss$.

The first few sections of the paper describe the structures on unitary groups needed to construct $L$-packets.  
In section \ref{sec:HermUnit}, we review Hermitian spaces and unitary groups, and we give a classification over $p$-adic fields.
We continue in Section \ref{sec:tori} by describing tori in unitary groups, culminating in a concrete description of the relatively unramified, anisotropic tori.
These tori also hold the focus of Section \ref{sec:embedding_tori}, where we describe how they embed in different unitary groups.
In Section \ref{sec:buildings} we describe the Bruhat-Tits building of a unitary group and discuss the reductions modulo $\pi$ of the anisotropic tori and parahorics.
We also explain why each relatively unramified anisotropic torus embeds in a unique maximal compact subgroup of $\G$.

With this background in hand, the second part of the paper describes the construction of an $L$-packet from a tame, discrete, regular Langlands parameter $\varphi$.
In Section \ref{sec:ua_tori:construction} we show that any tame Langlands parameter can be conjugated to particular form, and use the image of Frobenius to construct an anisotropic, relatively unramified torus $\T$.  We would like to use the local Langland correspondence for tori to define a character of $\T(\K)$, but are thwarted by the fact that the normalizer of $\hat{\T}$ does not split as a semidirect product.  In Section \ref{sec:character:typeL} we develop groups of type $L$ to sidestep this difficulty.  Using groups of type $L$, we define in Section \ref{sec:char_from_LP} a depth-zero character of $\T(\K)^0$.  We also show that this character is regular, a property used in demonstrating that the associated Deligne-Lusztig representation is irreducible.  Finally, Section \ref{sec:induction:rep_const} puts the pieces together and defines the supercuspidal representations that go into an $L$-packet.  For odd unitary groups the induction process yields a reducible representation, and we describe how to pick out one of the two constituents using a recipe for the central character.

Since DeBacker-Reeder have described $L$-packets for unramified unitary groups, we focus here on the case that $\G = \U(V)$ splits over a ramified quadratic extension $\E/\K$.  However, many of the results in Sections \ref{sec:ua_tori:construction}-\ref{sec:char_from_LP} work for any tamely ramified, connected, reductive group.  The primary exception is that we use the fact that $q \equiv 1 \pmod{\lvert \Gal(\E/\K) \rvert}$ in Construction \ref{con:ua_tori:T_construction}.

\subsection*{Acknowledgements}

A version of the results in this article were completed as part of my Ph.D. thesis \cite{roe:11a}.  I would like to thank my advisor, Benedict Gross, for his years of support and enthusiasm, and for encouraging me to transform my dissertation into this paper.

I greatly appreciate those mathematicians who contributed discussions and feedback to this work.  Jiu-Kang Yu's course on Bruhat-Tits buildings proved tremendously useful, and I thank him for his patience in explaining $p$-adic representation theory.  Mark Reeder's careful reading caught a host of errors and  simplified the arguments for Lemma \ref{lem:ua_tori:cent_ttau} and Proposition \ref{prop:ua_tori:Frob_in_N}; I am grateful for his expertise.  Finally, I would like to thank Andrew Fiori for proofreading an early draft and providing valuable feedback on which sections needed clarification.

I am grateful for the NDSEG and NSF fellowships that provided financial support during my five years at Harvard, and for the PIMS postdoctoral fellowship that supported me during the writing of this paper.

\setcounter{tocdepth}{1}
\tableofcontents

%

\section{Hermitian spaces and unitary groups} \label{sec:HermUnit}

In this section we give a review of Hermitian spaces and unitary groups over $p$-adic fields.

Let $K$ be a finite extension of $\Qp$ and fix an algebraic closure $\barK$.  All finite extensions of $\K$ are considered to be subfields of $\bar{\K}$.  We will write $\OK$ for the ring of integers in $\K$, $\k$ for the residue field and $\GalK = \Gal(\barK/\K)$ for the absolute Galois group.  Set $\Ku$ as the maximal unramified subfield of $\barK$, $\I = \Gal(\barK/\Ku)$ as the inertia subgroup of $\GalK$ and $\GalKuK = \Gal(\Ku/\K)$ as the quotient.  Let $\E/\K$ be a separable, quadratic extension of $\K$ and let $\tau$ be the nontrivial element of $\GalEK$.
Suppose that $V$ is a free $\E$-module of rank $n$.

\begin{definition} $ $
\begin{enumerate}
\item A \emph{Hermitian form} on $V$ is a function $\phi \colon V \times V \rightarrow \E$ satisfying
\begin{enumerate}
\item for all $v_1,v_2, v' \in V$ and $a, b \in E$, $\phi(av_1 + bv_2, v') = a\phi(v_1, v')+b\phi(v_2, v')$,
\item for all $v, v' \in V$, $\phi(v, v') = \tau \phi(v', v)$.
\end{enumerate}
\item A Hermitian form is \emph{nondegenerate} if $\phi(v, v') = 0$ for all $v' \in V$ implies $v=0$.
\item A \emph{Hermitian space} is a free $\E$-module with a nondegenerate Hermitian form.
\item The \emph{unitary group} $U(V) \subset \Res_{E/K} \GL(V)$ is the subgroup preserving $\phi$.
\end{enumerate}
\end{definition}

We may associate to $V$ a fundamental invariant.  Suppose $v_1, \ldots, v_n$ is a basis for $V$.

\begin{definition} \label{def:disc}
The \emph{discriminant} $dV$ of $V$ is the determinant of the matrix $\left[\phi(v_i, v_j)\right]$, well defined as an element of $\K^\times / \Nm_{E/\K} E^\times$.
\end{definition}

The discriminant behaves multiplicatively with respect to the orthogonal sum $V \oplus W$ of two Hermitian spaces $V$ and $W$:
\begin{equation} \label{eq:background:disc_sum}
d(V \oplus W) = dV \cdot dW.
\end{equation}

\subsection*{\texorpdfstring{Hermitian spaces over $p$-adic fields}{Hermitian spaces over p-adic fields}}

We assume now that $K$ is a $p$-adic field.  Classification of Hermitian spaces in this case relies on the following result:

\begin{proposition}[{c.f. \cite{gerstein:BasicQuadraticForms}*{Thm 2.67, \S 4.4}}] \label{prop:disc_class}
Two Hermitian spaces $V$ and $W$ associated to $E / \K$ are isometric if and only if they have the same dimension and same discriminant.  
\end{proposition}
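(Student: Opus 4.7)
The plan is to establish both directions of the biconditional: the forward direction by direct computation, and the converse by reducing any Hermitian space to a standard diagonal form.

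The forward direction is routine. An isometry, represented by a change-of-basis matrix $A$, transforms the Gram matrix $G = [\phi(v_i, v_j)]$ into $A \, G \, \tau(A)^T$, so the discriminant is multiplied by $\det(A) \cdot \tau(\det A) = \Nm_{\E/\K}(\det A)$, which is trivial in $\K^\times / \Nm_{\E/\K} \E^\times$. Equality of dimensions is of course also necessary.

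For the converse, I would first show that every nondegenerate Hermitian space admits an orthogonal basis. Using that $\phi(v,v) \in \K$ for every $v$ and that one can always find a vector with $\phi(v,v) \ne 0$ (otherwise the trace from $\E$ to $\K$ of $\phi$ would vanish identically, contradicting nondegeneracy), a Gram--Schmidt argument produces diagonalizations $V \cong \langle a_1, \ldots, a_n \rangle$ and $W \cong \langle b_1, \ldots, b_n \rangle$ with entries in $\K^\times$. The key technical step is a two-dimensional reduction: any $\langle a, b \rangle$ is isometric to $\langle 1, ab \rangle$. This is equivalent to showing $\langle a, b \rangle$ represents $1$, which reduces to the $p$-adic question of whether $a \Nm_{\E/\K}(x) + b \Nm_{\E/\K}(y) = 1$ admits a solution in $\E^2$. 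Using that the local norm index $[\K^\times : \Nm_{\E/\K} \E^\times]$ equals $2$, and splitting into the isotropic case (where $\langle a,b\rangle$ is universal) and the anisotropic case (where one compares cosets of $\Nm_{\E/\K}\E^\times$ in $\K^\times$), this representability is confirmed.

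Iterating the two-dimensional reduction on successive pairs transforms any diagonal form into the standard shape $\langle 1, \ldots, 1, d \rangle$, where $d \in \K^\times/\Nm_{\E/\K}\E^\times$ is the discriminant. Since $V$ and $W$ have equal discriminants by hypothesis, both reduce to the same standard form and are therefore isometric. The principal obstacle is the two-dimensional representability claim: this is the point at which the arithmetic of the $p$-adic quadratic extension $\E/\K$ genuinely enters, and it is the content of the cited classification in Gerstein.
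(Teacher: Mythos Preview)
The paper does not supply its own proof of this proposition: it is stated with a citation to Gerstein and used as a black box. So there is nothing to compare against beyond the reference itself.

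Your outline is the standard route and is essentially correct. A couple of remarks on the details. First, your justification for the existence of an anisotropic vector is slightly compressed: from $\phi(v,v)=0$ for all $v$ one polarizes to get $\Tr_{\E/\K}(\tau(t)\phi(v,w))=0$ for all $t\in\E$, and nondegeneracy of the trace form then forces $\phi(v,w)=0$, contradicting nondegeneracy of $\phi$. Second, the step ``$\langle a,b\rangle$ represents $1$, hence $\langle a,b\rangle\cong\langle 1,ab\rangle$'' implicitly uses Witt cancellation (or an equivalent direct argument) together with the discriminant to identify the complementary entry; you should say so. Third, the anisotropic case of the binary representability claim is the genuine arithmetic input, as you note. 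Your coset argument gestures at it but does not quite close it: when both $a$ and $b$ lie in the nontrivial norm coset one still has to produce two norms whose sum lies in the other coset, which uses that $\Nm_{\E/\K}\E^\times$ is open (hence additively rich) in $\K^\times$. This is exactly what the cited section of Gerstein handles, so deferring to it there is reasonable.
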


Thus there are precisely two isometry classes of Hermitian space in each dimension since $\K^\times / \Nm_{E/\K} E^\times$ has order 2 by local class field theory.  We now give concrete descriptions of these Hermitian spaces.

\begin{description}[style=nextline]
\item[$\bullet\ \ V$ has dimension $1$ over $E$]
Let $v \in V$ be nonzero.  The discriminant is given by the class of 
\[
\phi(v, v) \in \K^\times / \Nm_{E/\K} E^\times.
\]
An element $\alpha \in \GL(V) \cong E^\times$ will preserve $\phi$ if and only if $\Nm_{E/\K}(\alpha) = 1$, regardless of the value of $\phi(v, v)$.  Thus the unitary groups associated to the two Hermitian spaces of dimension one are isomorphic.  We refer to this group as $\U(E / \K)$, or $\U_1$ if the extension $E / \K$ is fixed.
\item[$\bullet\ \ V$ has dimension $2$ over $E$ and has an isotropic vector]
Suppose that there is an isotropic vector in $V$, namely some $v \in V$ with $v \ne 0$ and $\phi(v,v) = 0$.  Since $\phi$ is nondegenerate, for $w \in V$ not a multiple of $v$ we have $\phi(v, w) \ne 0$.  By adjusting $\alpha \in \K$, we can force
\[
\phi(w + \alpha v, w + \alpha v) = \phi(w,w) + \Tr_{E/\K} \left(\alpha \phi(v, w)\right)
\]
to be zero, since $\Tr_{E/\K}$ is surjective.  By rescaling the resulting vector, we can find a $w \in V$ with $\phi(w, w) = 0$ and $\phi(v,w) = \phi(w,v) = 1$.  Therefore, any two Hermitian spaces with an isotropic vector are isometric.  We will call this space the \emph{hyperbolic plane} associated to $E/\K$ and denote it by $\Hyp$.  The discriminant of the hyperbolic plane is clearly $d\Hyp = -1$.  
\item[$\bullet\ \ V$ has dimension $2$ over $E$ and has no isotropic vector]
The other isometry class of two-dimensional Hermitian spaces has no isotropic vector.  One method for constructing it takes advantage of the fact that we know that its discriminant must be different from $-1$ modulo $\Nm_{E/\K} E^\times$.  For any $\alpha, \beta \in \K^\times$ with $\alpha\beta \not\equiv -1 \pmod{\Nm_{E/\K} E^\times}$, and any basis $v, w$ of $V$ we can define $\phi$ by
\begin{align} \label{eq:background:nonhyper}
\phi(v, v) = \alpha, \nonumber \\
\phi(v, w) = 0, \\
\phi(w, w) = \beta. \nonumber
\end{align}

Any two such spaces are isometric, and we will refer to this other isometry class of two-dimensional Hermitian space as the \emph{anisotropic plane} and denote it by $\Ani$.

One can also start with a quaternion algebra $\Ani$ containing $\E$ and put the structure of a Hermitian space on it: see Gross \cite{gross:04a}*{\S 5} and Springer \cite{springer:LinearAlgebraicGroups}*{\S 17.1.4} for details.  If this quaternion algebra is split we get an isomorphism with $\Hyp$; the non-split case yields the anisotropic planes.  In either case, the associated group of unitary transformations can be identified with $\Ani^\times$.

Note that the unitary groups $\U(\Hyp)$ and $\U(\Ani)$ are not isomorphic: $\U(\Hyp)$ contains a $\K$-split torus of dimension $1$, while $\U(\Ani)$ does not contain a nontrivial $\K$-split torus.
\end{description}

We can express any higher dimensional Hermitian space as an orthogonal sum of these one and two-dimensional spaces:
\begin{proposition} \label{prop:background:Uclass}
Suppose $V$ is a Hermitian space of dimension $n$.  Then
\begin{enumerate}
\item if $n = 2m$, then either
\begin{align*}
V &\cong \Hyp^m & dV &\equiv (-1)^m \\
\intertext{or}
V &\cong \Hyp^{m-1} \oplus \Ani & dV &\nequiv (-1)^m.
\end{align*}
We will call such unitary groups \emph{even}.  We have $U(\Hyp^m) \not\cong U(\Hyp^{m-1} \oplus \Ani)$.
\item If $n=2m+1$, then 
\begin{align*}
V &\cong \Hyp^m \oplus L & dV &\equiv (-1)^m dL
\end{align*}
for some one-dimensional Hermitian space $L$.  We will call such unitary groups \emph{odd}.  We have $\U(\Hyp^m \oplus L) \cong \U(\Hyp^m \oplus L')$ for any Hermitian lines $L$ and $L'$.
\end{enumerate}
\end{proposition}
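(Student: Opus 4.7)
My plan is to reduce the isometry classification to an invariant computation via Proposition \ref{prop:disc_class}: two Hermitian spaces over $\E/\K$ are isometric exactly when their dimensions and discriminants agree, so it suffices to exhibit, for each possible dimension and discriminant class, a representative of the form stated in the proposition. Equation \eqref{eq:background:disc_sum} reduces this to computing discriminants of the basic pieces $\Hyp$, $\Ani$, and a one-dimensional Hermitian space $L$, all already recorded in the preceding discussion.

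For the even case I would compute $d\Hyp^m = (-1)^m$ and $d(\Hyp^{m-1} \oplus \Ani) = (-1)^{m-1} \cdot d\Ani$, and then observe that since $d\Ani \nequiv -1$ these two values lie in different classes of $\K^\times / \Nm_{\E/\K} \E^\times$. Because that quotient has order two, every discriminant class is realized by exactly one of the two constructions, proving the dichotomy. The non-isomorphism $\U(\Hyp^m) \not\cong \U(\Hyp^{m-1} \oplus \Ani)$ follows by comparing $\K$-ranks: each hyperbolic summand contributes a one-dimensional $\K$-split torus while $\Ani$ contributes none (per the remark on $\U(\Ani)$ already in the excerpt), so the split ranks are $m$ and $m-1$ respectively.

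For the odd case, $d(\Hyp^m \oplus L) = (-1)^m \cdot dL$, and as $L$ ranges over the two isometry classes of Hermitian lines $dL$ takes both values in $\K^\times / \Nm_{\E/\K} \E^\times$, so every discriminant is realized. To prove the isomorphism $\U(\Hyp^m \oplus L) \cong \U(\Hyp^m \oplus L')$ independent of $L$, I would use a rescaling argument: for $c \in \K^\times$ the form $c\phi$ is again Hermitian (the identity $c\phi(v,v') = \tau(c\phi(v',v))$ forces $\tau(c) = c$, so $c \in \K$), it has literally the same unitary group as $\phi$ as a subgroup of $\GL(V)$, and its Gram matrix is $c \cdot [\phi(v_i, v_j)]$, so its discriminant is $c^n \cdot dV$. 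When $n = 2m+1$ is odd, $c^n = c \cdot (c^2)^m \equiv c \pmod{\Nm_{\E/\K} \E^\times}$ because $c^2 = \Nm_{\E/\K}(c)$ for $c \in \K^\times$, so taking $c$ in the nontrivial class flips the discriminant between the two classes without changing the unitary group. Applying Proposition \ref{prop:disc_class} to the rescaled space then identifies $\U(\Hyp^m \oplus L)$ with $\U(\Hyp^m \oplus L')$.

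The main obstacle is this isomorphism claim in the odd case; the rescaling trick is clean once set up, but one needs to notice that the evenness of $n$ is exactly what prevents the analogous argument from collapsing the two classes of $\U(V)$ in the even case, since $c^{2m} = (c^2)^m$ is always a norm from $\E^\times$. Apart from this, the proof reduces to bookkeeping with the multiplicativity of discriminants and the known order two of $\K^\times / \Nm_{\E/\K} \E^\times$.
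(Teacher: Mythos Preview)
Your proposal is correct and follows essentially the same approach as the paper: exhibit representatives realizing each discriminant class (invoking Proposition \ref{prop:disc_class}), distinguish the even unitary groups by the dimension of a maximal $\K$-split torus, and identify the odd unitary groups via the rescaling $\phi \mapsto c\phi$ for $c \in \K^\times \setminus \Nm_{E/\K} E^\times$. The paper's proof is terser but makes exactly these moves.
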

\begin{proof}
We obtain each possible discriminant and thus each isometry class of Hermitian space.  The inequality between even unitary groups follows from the difference in the dimension of the maximal $K$-split torus in the two cases.  For odd Hermitian spaces, scaling the Hermitian form by an element of $\K^\times - \Nm_{E/\K} E^\times$ changes the discriminant but leaves the notion of unitary transformation invariant.  So the two different isometry classes of Hermitian space in odd dimensions yield isomorphic unitary groups.
\end{proof}

It will be useful to specify a basis for $V$ in each case. 
\begin{itemize}
\item When $V \cong \Hyp^m$, let $\{v_i, v_{-i}\}$ be the standard basis for the $i\th$ hyperbolic plane.
\item When $V \cong \Hyp^m \oplus L$, let $\{v_i, v_{-i}\}$ be the standard basis for the $i\th$ hyperbolic plane, and $\{v_0\}$ a basis for $L$.  We require $\phi(v_0, v_0) \in \OO_\K^\times$.
\item When $V \cong \Hyp^{m-1} \oplus \Ani$, we let $\{v_i, v_{-i}\}$ be the standard basis for the $i\th$ hyperbolic plane, and then choose two orthogonal vectors $v_0, v_0' \in \Ani$.  We can normalize the choice of $v_0$ and $v_0'$ by imposing the same conditions as on $v_0$ above.
\end{itemize}

\section{Tori in unitary groups} \label{sec:tori}

In this section we describe a certain class of tori in unitary groups that will play a central role in the construction of $L$-packets: the relatively unramified, anisotropic tori.

\subsection*{Maximal split tori}

A \emph{maximal $\K$-split torus} in an algebraic group $\G$ is a subtorus that is maximal among those that are split over $\K$; any two such tori are conjugate over $\K$ \cite{springer:LinearAlgebraicGroups}*{Thm. 15.2.6}.  Moreover, we can find a maximal $\K$-torus containing any given maximal $\K$-split torus $\Aa \subset \G$, since $\Z_{\G}(\Aa)$ contains a maximal torus defined over $\K$ \cite{springer:LinearAlgebraicGroups}*{Thm. 13.3.6}.

We say that $\G$ is \emph{quasi-split} if one of the following conditions hold.
\begin{proposition}[{c.f. \cite{springer:LinearAlgebraicGroups}*{Prop. 16.2.2}}] \label{prop:background:qs}
The following are equivalent:
\begin{enumerate}
\item the centralizer $\Z_{\G}(\Aa)$ of any maximal $\K$-split torus $\Aa$ is a maximal torus,
\item \label{proppart:background:qs_c} there is a Borel subgroup of $\G$ that is defined over $\K$.  
\end{enumerate}
\end{proposition}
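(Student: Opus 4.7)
My plan is to prove the two implications separately, in both cases working with the roots $\Phi \subset X^*(\T)$ of a maximal $\K$-torus $\T$ in $\G$ and exploiting that the Galois group $\Gamma = \Gal(\bar\K/\K)$ acts trivially on the character lattice of any $\K$-split torus.

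First, assume (1), so that $\T := \Z_\G(\Aa)$ is itself a maximal torus. Because the reductive group $\Z_\G(\Aa)$ is generated by $\T$ together with the root subgroups $U_\alpha$ for $\alpha \in \Phi$ with $\alpha|_\Aa = 0$, the hypothesis forces $\alpha|_\Aa \ne 0$ for every root. I will pick a cocharacter $\lambda \in X_*(\Aa)_\RR$ avoiding all the finitely many hyperplanes $\{\alpha|_\Aa = 0\}$; this $\lambda$ is then regular for $\Phi$, and as $X_*(\Aa)$ is $\Gamma$-fixed it is Galois-invariant. Setting $\Phi^+ := \{\alpha \in \Phi : \langle \alpha, \lambda\rangle > 0\}$ yields a system of positive roots, and for $\sigma \in \Gamma$ we have $\langle \sigma\alpha,\lambda\rangle = \langle\alpha,\sigma^{-1}\lambda\rangle = \langle\alpha,\lambda\rangle$, so $\Phi^+$ is $\Gamma$-stable. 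The Borel $\B \supset \T$ determined by $\Phi^+$ is a priori only defined over $\bar\K$, but $\Gamma$-stability forces descent to $\K$.

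Conversely, assume (2) and pick a $\K$-Borel $\B$ with a maximal $\K$-torus $\T \subset \B$ (available because $\B$ is $\K$-solvable and $\K$ has characteristic zero). Let $\Aa \subset \T$ be the maximal $\K$-split subtorus, which is a maximal $\K$-split torus of $\G$ by the standard conjugacy of maximal $\K$-split tori. The Borel $\B$ corresponds to a $\Gamma$-stable positive system $\Phi^+ \subset \Phi$. Suppose for contradiction that $\Z_\G(\Aa) \supsetneq \T$; then some $\alpha \in \Phi$ satisfies $\alpha|_\Aa = 0$, i.e.\ $\alpha$ lies in the kernel $X^*(\T/\Aa) \hookrightarrow X^*(\T)$, and after swapping $\alpha \leftrightarrow -\alpha$ we can arrange $\alpha \in \Phi^+$. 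The torus $\T/\Aa$ is $\K$-anisotropic by construction, so $X^*(\T/\Aa)^\Gamma = 0$ (any nonzero $\Gamma$-invariant character would factor through $\Gm$ and produce a $\K$-split quotient). Consequently the $\Gamma$-orbit sum of $\alpha$ vanishes in $X^*(\T)$; but every summand lies in the $\Gamma$-stable set $\Phi^+$, and positive roots cannot sum to zero. This contradiction gives $\Z_\G(\Aa) = \T$.

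\textbf{Main obstacle.} The subtle ingredient is the direction (2) $\Rightarrow$ (1), and specifically the vanishing $X^*(\T/\Aa)^\Gamma = 0$ for an anisotropic $\K$-torus. Once this is in hand, the orbit-sum argument against $\Phi^+$ is immediate. The remaining work is routine bookkeeping on how restriction $X^*(\T) \to X^*(\Aa)$ interacts with the Galois action, together with the structural description of $\Z_\G(\Aa)$ in terms of root subgroups trivial on $\Aa$.
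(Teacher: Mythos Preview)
The paper does not prove this proposition; it simply cites Springer's \emph{Linear Algebraic Groups}, Prop.~16.2.2, and moves on. So there is no in-paper argument to compare against.

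Your argument is essentially the standard one and is correct in outline, with one small logical wrinkle in the direction $(2)\Rightarrow(1)$. You assert that the maximal $\K$-split subtorus $\Aa\subset\T$ is already a maximal $\K$-split torus of $\G$ ``by the standard conjugacy of maximal $\K$-split tori,'' but conjugacy alone does not give this: not every maximal $\K$-torus contains a maximal $\K$-split torus (an anisotropic maximal torus contains none at all). The clean fix is to reorder your steps. First run the orbit-sum argument exactly as you wrote it to prove $\Z_\G(\Aa)=\T$; \emph{then} observe that any $\K$-split torus $\Aa'\supseteq\Aa$ centralizes $\Aa$, hence lies in $\T$, hence equals $\Aa$ by maximality of $\Aa$ inside $\T$. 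This shows $\Aa$ is maximal $\K$-split in $\G$, and now conjugacy transfers the conclusion to an arbitrary maximal $\K$-split torus.

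One cosmetic point: when you write ``the $\Gamma$-orbit sum of $\alpha$ vanishes,'' make explicit that $\Gamma$ acts on $X^*(\T)$ through a finite quotient (since $\T$ splits over a finite extension), so the sum is finite and the observation that a nonempty sum of positive roots cannot vanish applies cleanly.
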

Note that since $\Aa$ is determined up to conjugacy, the first criterion allows us to pick out a $\G(\K)$ conjugacy class of maximal tori, which we will refer to as the \emph{quasi-split maximal tori} in $\G$.

In order to understand these tori for unitary groups over $p$-adic fields, we begin with the hyperbolic plane $\Hyp$.  Let $\Ss'$ be the maximal torus in $\GL(\Hyp)$ consisting of those matrices with $v_{-1}$ and $v_1$ as eigenvectors, and define $\Ss \subset \U(\Hyp)$ as the intersection of $\Res_{E/\K} \Ss'$ with $\U(\Hyp) \subset \Res_{E/\K} \GL(\Hyp)$.  Then $\Ss$ is isomorphic to $\Res_{E/\K} \Gm$, with $\K$-points consisting of those matrices scaling $v_1$ by $\alpha \in E^\times$ and $v_{-1}$ by $\tau(\alpha)^{-1}$.  We now define $\Aa$ as the maximal $\K$-split subtorus of $\Ss$, which is also a maximal $\K$-split subtorus in $\U(V)$.  The $\K$-points of $\Aa$ consist of those matrices scaling $v_1$ by $\alpha \in \K^\times$ and $v_{-1}$ by $\alpha^{-1}$.  

We can choose a basis $\{\chi_1, \chi_{-1}\}$ of $X^*(\Ss)$ so that $\tau \in \GalEK$ acts by $\tau(\chi_1) = -\chi_{-1}$ \cite{springer:LinearAlgebraicGroups}*{Prop. 11.4.22}.  Note that $\chi_1$ and $\chi_{-1}$ are not the characters that pick out the eigenvalues of $v_1$ and $v_{-1}$.  Rather, restriction induces an orthogonal projection $X^*(\Ss) \rightarrow X^*(\Aa)$ with kernel spanned by $\chi_1 + \chi_{-1}$ and leaving $\chi_1 - \chi_{-1}$ fixed.  We identify $X^*(\Aa)$ with the span of $\chi_1 - \chi_{-1}$.  The complementary subspace of $X^*(\Ss)$ spanned by $\chi_1 + \chi_{-1}$ corresponds to $\Aa' \subset \Ss$ isomorphic to $\U_1$.

Using these tori in $\Hyp$, we may describe the maximal $K$-split tori and their centralizers for all of the unitary groups listed in Proposition \ref{prop:background:Uclass}.
\begin{itemize}
\item The quasi-split torus $\Ss$ in $\U(\Hyp \oplus \cdots \oplus \Hyp)$ merely uses more indices.  We can write down a basis $\{\chi_{-m}, \ldots, \chi_{-1}, \chi_1, \ldots, \chi_m\}$ of $X^*(\Ss)$ so that $\GalK$ acts through its quotient $\GalEK$, with $\tau$ mapping $\chi_i$ to $-\chi_{-i}$.  One can identify $\Aa$ as the subtorus corresponding to the span of $\{\chi_i - \chi_{-i}\}_{i=1}^m$, and we have
\begin{align*}
\Aa &\cong (\Gm)^m, \\
\Ss &\cong (\Res_{E/\K} \Gm)^m.
\end{align*}
\item In $\U(\Hyp \oplus \cdots \oplus \Hyp \oplus L)$ we add $\chi_0$ to the basis for $X^*(\Ss)$; $\GalK$ still acts through $\GalEK$ with $\tau$ mapping $\chi_i$ to $-\chi_{-i}$.  We have
\begin{align*}
\Aa &\cong (\Gm)^m, \\
\Ss &\cong (\Res_{E/\K} \Gm)^m \times \U_1(E/K).
\end{align*}
\item Both of the previous cases are quasi-split, with $\Ss$ equal to the centralizer of $\Aa$.  The maximal $\K$-split torus in $\U(\Hyp \oplus \cdots \oplus \Hyp \oplus \Ani)$ is just the one included from $\U(\Hyp \oplus \cdots \oplus \Hyp)$, but its centralizer is $(\Res_{E/K} \Gm)^m \times \Ani$.  By Proposition \ref{prop:background:qs} $\U(\Hyp \oplus \cdots \oplus \Hyp \oplus \Ani)$ is not quasi-split.
\end{itemize}

\subsection*{Weyl groups}

In studying anisotropic tori in unitary groups, it will be important to understand the Weyl group of $\Ss$ in the quasi-split case.  Let $\Nn$ be the normalizer of $\Ss$ in $\G = \U(V)$, and $\W = \Nn / \Ss$ the Weyl group of $\Ss$.  Since $\U(V)$ is an inner form of $\GL(V)$, $\W$ is isomorphic to the symmetric group $\Perm_n$, generated by reflections $\omega_{i,j}$ in the roots $\chi_i - \chi_j$.  Define $\eta \in \W$ as a product of commuting reflections:
\[
\eta = \prod_{i=1}^m \omega_{i,-i}.
\]
The action of $\GalK$ on $\W$ is determined through the actions of $\GalK$ and $\W$ on $X^*(\Ss)$:
\[
(\gamma.\sigma)(\chi) = \gamma(\sigma(\gamma^{-1} \chi)) \mbox{ for } \sigma \in \W \mbox{ and } \gamma \in \GalK.
\]
A computation shows that in fact this action is inner:
\[
\tau . \sigma = \eta\sigma\eta.
\]

\begin{proposition} \label{prop:ua_tori:WI_for_2An}
The rational Weyl group $\W^\GalK = \Z_{\W}(\eta)$ is isomorphic to $(\ZZ/2\ZZ)^m \rtimes \Perm_m$.
\end{proposition}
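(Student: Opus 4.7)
The proof breaks naturally into two steps: first identifying $\W^{\GalK}$ with $\Z_{\W}(\eta)$, and second computing this centralizer inside $\Perm_n$.

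For the first step, the plan is to unwind the Galois action. Since the action of $\GalK$ on $X^*(\Ss)$ factors through $\GalEK$, the same is true of the induced action on $\W$; so $\W^{\GalK} = \W^{\GalEK}$ and it suffices to identify the fixed points of $\tau$. Using $\tau.\sigma = \eta\sigma\eta$ together with the fact that $\eta$ is an involution (it is a product of commuting transpositions), $\tau.\sigma = \sigma$ is equivalent to $\eta\sigma = \sigma\eta$, giving the first equality.

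For the second step, I would compute the centralizer of $\eta$ directly inside $\W \cong \Perm_n$ with $n = 2m$. The element $\eta$ is a fixed-point-free involution whose cycles are the pairs $\{i,-i\}$ for $i = 1, \ldots, m$. An element $\sigma \in \Perm_n$ commutes with $\eta$ if and only if $\sigma$ permutes this collection of pairs as an unordered set. Sending $\sigma$ to the induced permutation of the $m$ pairs defines a surjective homomorphism
\[
\Z_{\W}(\eta) \twoheadrightarrow \Perm_m.
\]
The kernel consists of elements that stabilize each pair $\{i,-i\}$ setwise; on each pair such a $\sigma$ either fixes the two indices or swaps them, so the kernel is $(\ZZ/2\ZZ)^m$. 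A splitting is given by lifting a permutation $\pi$ of $\{1,\ldots,m\}$ to the permutation of $\{-m,\ldots,-1,1,\ldots,m\}$ that sends $i \mapsto \pi(i)$ and $-i \mapsto -\pi(i)$, yielding the claimed semidirect product decomposition.

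Neither step is a serious obstacle; the only point requiring care is the verification that the Galois action on $\W$ really is inner via $\eta$ (already stated in the excerpt) and the bookkeeping that $\eta^2 = 1$, which converts the twisted conjugation condition into genuine commutativity. The centralizer computation itself is the standard identification of the centralizer of a fixed-point-free involution in a symmetric group with the hyperoctahedral group.
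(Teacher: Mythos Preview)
Your argument is correct and essentially the same in spirit as the paper's, though the second step is organized differently.  The paper exhibits explicit generators $\omega_{i,-i}$ and $\omega_{i,j}\omega_{-i,-j}$ (for $i,j>0$) visibly commuting with $\eta$, observes they generate a copy of $(\ZZ/2\ZZ)^m \rtimes \Perm_m$, and then checks that this is the full centralizer by a cardinality count via orbit--stabilizer (there are $n!/(2^m m!)$ permutations conjugate to $\eta$, so $|\Z_\W(\eta)|=2^m m!$).  Your approach instead builds the homomorphism $\Z_\W(\eta)\twoheadrightarrow \Perm_m$ from the action on the pairs $\{i,-i\}$, identifies the kernel, and splits the sequence.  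Both are standard ways to identify a hyperoctahedral centralizer; yours is slightly more structural, the paper's slightly more concise.

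One small omission: you write ``with $n=2m$'' and call $\eta$ fixed-point-free, but the proposition is meant to cover the odd case $n=2m+1$ as well.  There $\eta$ has the single fixed index $0$, and any $\sigma$ commuting with $\eta$ must fix $0$ (it is the unique $\eta$-fixed index), after which your pair-permutation argument goes through verbatim on the remaining $2m$ indices.  The paper's counting formula $n!/(2^m m!)$ happens to work uniformly in both parities, which is why it does not separate the cases.
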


\begin{proof}
The transpositions $\omega_{i,-i}$ and the elements $\omega_{i,j}\omega_{-i,-j}$ for $i,j > 0$ visibly commute with $\eta$; these generate a subgroup of $\W$ of the desired form.  The cardinality of $\Z_{\W}(\eta)$ is $2^m m!$ by the orbit-stabilizer theorem and the computation that there are $\frac{n!}{2^m m!}$ permutations conjugate to $\eta$.
\end{proof}

\begin{remark}
Reeder \cite{reeder:11a} includes a discussion of centralizers in Weyl groups.  He focuses on the Weyl group of $\Eop_8$, but his techniques are applicable to a general Weyl group.
\end{remark}

We will need a description of the conjugacy classes in $\W^\GalK$.  Since $\W^\GalK \cong \Wop(\Bop_m)$, we can turn to Carter \cite{carter:72a}*{pp. 25-26}.

The rational Weyl group $\W^\GalK$ acts on the set $\{ \chi_{\pm i}\}_{i=1}^m$, and one can decompose any element $w \in \W^\GalK$ into cycles on these vectors.  Such cycles take the form
\[
\chi_{i_1} \mapsto \chi_{\pm i_2} \mapsto \chi_{\pm i_3} \mapsto \cdots \mapsto \chi_{\pm i_r} \mapsto \chi_{\pm i_1}.
\]
\begin{definition} \label{def:ua_tori:positive_cycle}
We say that such a cycle $(i_1, \ldots, i_r)$ is \emph{positive} if $w^r(\chi_{i_1}) = \chi_{i_1}$ and \emph{negative} if $w^r(\chi_{i_1}) = \chi_{-i_1}$.  We call $r$ the \emph{length} of the cycle.  The collection of lengths and signs of the cycles of $w$ is called the \emph{signed cycle type} of $w$.
\end{definition}

Given such a signed cycle type, we can define a pair of partitions $\mu$ and $\nu$ by setting $\mu$ to be the collection of lengths of positive cycles and $\nu$ to be the collection of lengths of negative cycles.  For example, when $n=6$ the element $\omega_{1,-1}\omega_{2,-2}$ yields $\mu = \{1\}$ and $\nu = \{1,1\}$.

\begin{proposition}[{c.f. \cite{carter:72a}*{Prop. 24}}] \label{prop:ua_tori:conj_in_Bn} \mbox{}
\begin{enumerate}
\item A signed cycle type occurs for some element of $\W^\GalK$ if and only if $\lvert \mu \rvert + \lvert \nu \rvert = m$.
\item The conjugacy classes of $\W^\GalK$ are in bijection with the possible signed cycle types.  
\end{enumerate}
\end{proposition}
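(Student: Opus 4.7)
The plan is to identify $\W^\GalK$ with the signed permutation group on $\{1, \ldots, m\}$ acting on $\{\chi_{\pm i}\}_{i=1}^m$, and to read off cycle invariants from this action. By Proposition \ref{prop:ua_tori:WI_for_2An}, an element $w \in \W^\GalK$ is exactly a permutation of this $2m$-element set commuting with the involution $\eta \colon \chi_i \leftrightarrow \chi_{-i}$. Consequently the cycle decomposition of $w$ on $\{\chi_{\pm i}\}$ is $\eta$-stable, so every cycle either (a) is $\eta$-stable of even length $2r$, or (b) belongs to a pair of disjoint cycles interchanged by $\eta$, each of length $r$.

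For part (1), I would match these two cases to the two signed cycle types. A pair of type (b) yields a positive cycle of length $r$: for any $\chi_{i_1}$ in one of the two cycles, $w^r(\chi_{i_1}) = \chi_{i_1}$, and the pair uses exactly $r$ of the indices $\{1, \ldots, m\}$. A cycle of type (a) yields a negative cycle of length $r$: after $r$ steps $\chi_{i_1}$ must map to $\chi_{-i_1}$ (any other option would either shorten the cycle or make it $\eta$-unstable), so again $r$ indices are used. Summing cycle lengths gives $\lvert \mu \rvert + \lvert \nu \rvert = m$. Conversely, given such partitions, I realize the prescribed signed cycle type concretely: partition $\{1, \ldots, m\}$ accordingly, realize each positive part of length $r$ by two disjoint $r$-cycles (one on $\chi_{i_1}, \ldots, \chi_{i_r}$ and another on $\chi_{-i_1}, \ldots, \chi_{-i_r}$), and each negative part of length $r$ by a single $2r$-cycle alternating $\chi_{i_j}$ with $\chi_{-i_j}$. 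Each such element commutes with $\eta$ by construction and so lies in $\W^\GalK$.

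For part (2), I first argue that signed cycle type is a conjugation invariant. For $v, w \in \W^\GalK$, the permutation $vwv^{-1}$ acts on $\{\chi_{\pm i}\}$ by relabeling each cycle of $w$ via $v$; this preserves cycle length, and it preserves positivity because $v\chi_{i_1}$ is of the form $\chi_{\pm j_1}$, and $(vwv^{-1})^r$ fixes $v\chi_{i_1}$ if and only if $w^r$ fixes $\chi_{i_1}$. For the converse, given $w_1, w_2$ with the same signed cycle type, I would construct a conjugator $v \in \W^\GalK$ by pairing cycles of $w_1$ with cycles of $w_2$ of matching length and sign, then defining $v$ cycle-by-cycle as a signed bijection of the underlying index sets.

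The main obstacle, and the step requiring the most care, is that within a single cycle the individual signs $\epsilon_k$ appearing in $\chi_{i_1} \mapsto \chi_{\epsilon_2 i_2} \mapsto \cdots \mapsto \chi_{\epsilon_r i_r} \mapsto \chi_{\pm i_1}$ are not conjugation invariants: only their product (that is, the positive/negative designation) is. Showing that any two cycles sharing a common length and overall sign can be brought into a common normal form by conjugating with elements of the $(\ZZ/2\ZZ)^m$ factor of $\Wop(\Bop_m)$ is the classical combinatorial heart of the conjugacy classification in the hyperoctahedral group, and for that bookkeeping I would appeal directly to Carter \cite{carter:72a}*{Prop.\ 24}.
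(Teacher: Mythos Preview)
Your argument is correct. The paper itself offers no proof of this proposition: it simply records the statement with a citation to Carter \cite{carter:72a}*{Prop.~24} and moves on. Your write-up therefore supplies strictly more than the paper does---a clean identification of positive/negative cycles with the $\eta$-paired and $\eta$-stable cycles of $w$ on $\{\chi_{\pm i}\}$, the counting argument giving $\lvert\mu\rvert+\lvert\nu\rvert=m$, and the invariance of signed cycle type under conjugation---before, like the paper, deferring the final normal-form bookkeeping to Carter. There is nothing to compare beyond that; both accounts ultimately rest on the same reference.
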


\subsection*{Relatively unramified tori}

Assume now that $\G$ is quasi-split.  Following Reeder \cite{reeder:11a}*{\S 6}, we describe tori in $\G$ as Galois twists of the quasi-split torus $\Ss$.

We say that two tori $\Ss_1$ and $\Ss_2$ are \emph{rationally conjugate} if there is an element of $\G(\K)$ conjugating $\Ss_1(\K)$ to $\Ss_2(\K)$, and \emph{stably conjugate} if there is an element of $\G(\bar{\K})$ conjugating $\Ss_1(\K)$ to $\Ss_2(\K)$.  These notions partition the $\K$-tori in $\G$ into stable conjugacy classes, and each stable conjugacy class into rational conjugacy classes.  We have maps 
\[
\Hh^1(\K, \Nn) \rightarrow \Hh^1(\K, \W),
\]
induced by the projection $\Nn \rightarrow \W$, and
\[
\Hh^1(\K, \Nn) \rightarrow \Hh^1(\K, \G),
\]
induced by the inclusion $\Nn \rightarrow \G$.  These cohomology groups give us a parameterization of the $\K$-conjugacy classes of maximal tori in $\G$ and its pure inner forms.

\begin{proposition}[{c.f. \cite{reeder:11a}*{Prop 6.1} and \cite{serre:GaloisCohomology}*{Cor. 2 of Prop. I.36}}]
Let $\G$ be a quasi-split group over $\K$, $\xi \in \Zcycle^1(\K, \G)$ be a cocycle and $\G_\xi$ be the twist of $\G$ corresponding to $\xi$.
\begin{enumerate}
\item The rational classes of maximal tori in $\G_\xi$ are in bijection with the set $R_\xi$ of cohomology classes in $\Hh^1(\K, \Nn)$ mapping to the class of $\xi$ in $\Hh^1(\K, \G)$.  In particular, the rational classes of maximal tori in $\G$ are in bijection with the kernel of $\Hh^1(\K, \Nn) \rightarrow \Hh^1(\K, \G)$. 
\item The stable classes of maximal tori in $\G_\xi$ are in bijection with the image of $R_\xi$ in $\Hh^1(\K, \W)$.
\item The stable classes of maximal tori in $\G$ are in bijection with $\Hh^1(\K, \W)$.
\end{enumerate}
\end{proposition}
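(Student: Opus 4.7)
The plan is to apply nonabelian Galois cohomology to the two short exact sequences of $\K$-group schemes
\[
1 \to \Nn \to \G \to \G/\Nn \to 1 \qquad \text{and} \qquad 1 \to \Ss \to \Nn \to \W \to 1,
\]
using the fact that $\G/\Nn$ is the variety of maximal tori of $\G$. Concretely, since $\Nn$ is the scheme-theoretic stabilizer of $\Ss$ under conjugation and any two maximal tori of $\G_{\bar{\K}}$ are $\G(\bar{\K})$-conjugate, the map $g \Nn \mapsto g\Ss g^{-1}$ identifies $(\G/\Nn)(\bar{\K})$ with maximal tori of $\G_{\bar{\K}}$. Taking Galois-fixed points identifies $(\G/\Nn)(\K)$ with the set of maximal $\K$-tori of $\G$, and $\G(\K)$-orbits on $(\G/\Nn)(\K)$ with rational conjugacy classes.

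For part (1) with $\xi$ trivial, the exact sequence of pointed sets
\[
\G(\K) \to (\G/\Nn)(\K) \to \Hh^1(\K, \Nn) \to \Hh^1(\K, \G)
\]
produces a canonical bijection between $\G(\K) \backslash (\G/\Nn)(\K)$ and the kernel of the last map. For general $\xi$, I would invoke the twisting principle (cf. \cite{serre:GaloisCohomology}*{I.\S 5.3--5.5}): twisting by $\xi$ identifies the fiber of $\Hh^1(\K, \Nn) \to \Hh^1(\K, \G)$ over the class of $\xi$ with $\ker(\Hh^1(\K, \Nn_\xi) \to \Hh^1(\K, \G_\xi))$, where $\Nn_\xi$ is the inner twist of $\Nn$ by $\xi$. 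Since $\G_\xi$ is the twist of $\G$ and the variety of maximal tori is equivariant under inner conjugation, applying the first paragraph's identification to $\G_\xi$ realizes $R_\xi$ as the set of rational conjugacy classes of maximal tori in $\G_\xi$, proving (1).

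For parts (2) and (3), I would use the second short exact sequence to push rational classes forward to $\Hh^1(\K, \W)$. The key step is to show that two rational max tori $\T_1, \T_2$ in $\G_\xi$ (with representing cocycles $n_1, n_2 \in \Zcycle^1(\K, \Nn)$ coming from $g_i \in \G(\bar{\K})$ with $\T_i = g_i \Ss g_i^{-1}$) are stably conjugate exactly when $\pi(n_1)$ and $\pi(n_2)$ are cohomologous in $\Hh^1(\K, \W)$. One direction is a direct computation: a stable conjugator $h \in \G(\bar{\K})$ with $h\T_1 h^{-1} = \T_2$ yields $n = g_2^{-1} h g_1 \in \Nn(\bar{\K})$ whose image in $\W$ provides a coboundary relating $\pi(n_1)$ and $\pi(n_2)$, using that $h^{-1}\gamma(h)$ normalizes $\T_1$ because both $\T_i$ are $\K$-rational. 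The converse builds a stable conjugator by reversing the construction. This proves (2). For (3), the surjectivity of $\Hh^1(\K, \Nn) \to \Hh^1(\K, \W)$ when $\G$ is quasi-split lifts every Weyl cocycle to a maximal torus in $\G$ itself, so the image of $R_{\mathrm{trivial}}$ is all of $\Hh^1(\K, \W)$.

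The main obstacle is the last assertion: establishing that $\Hh^1(\K, \Nn) \to \Hh^1(\K, \W)$ is surjective in the quasi-split case, equivalently, that every $\W$-cocycle lifts. This is where quasi-splitness is used essentially—a pinning of $\G$ over $\K$ allows one to choose, for each $w \in \W$, a $\K$-rational representative in $\Nn$ (via a Tits-style section), so a $\W$-cocycle can be lifted pointwise. The bookkeeping to verify that the lift is itself a cocycle, and the careful interpretation of the twisting bijection in nonabelian cohomology, are the main technical burdens; the rest of the argument is formal manipulation of exact sequences.
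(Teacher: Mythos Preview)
The paper does not supply its own proof of this proposition; it simply records the statement with the attributions to Reeder and Serre and moves on. So there is no paper-side argument to compare against. Your outline for parts (1) and (2) is the standard one and is fine: the identification of $\G/\Nn$ with the variety of maximal tori, the long exact sequence in nonabelian cohomology, and the twisting bijection from \cite{serre:GaloisCohomology} give exactly what is claimed.

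Part (3) is where your sketch has a genuine gap. You assert that surjectivity of $\Hh^1(\K,\Nn)\to\Hh^1(\K,\W)$ ``lifts every Weyl cocycle to a maximal torus in $\G$ itself,'' but that inference is not valid. A class in $\Hh^1(\K,\Nn)$ lifting $\rho$ corresponds, by your own part (1), to a rational class of tori in the inner form $\G_\xi$ determined by its image $\xi\in\Hh^1(\K,\G)$; nothing forces $\xi$ to be trivial. What (3) requires is the stronger statement that every $\rho\in\Hh^1(\K,\W)$ lifts to the kernel $R_{\text{trivial}}=\ker\bigl(\Hh^1(\K,\Nn)\to\Hh^1(\K,\G)\bigr)$, i.e.\ that the twisted torus $\Ss_\rho$ actually embeds over $\K$ into $\G$, not merely into some pure inner form.

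Your proposed mechanism, lifting the cocycle via a Tits-style section attached to a $\K$-pinning, does not close this gap. The Tits section is Galois-equivariant but is not a group homomorphism: one has $\dot w_1\dot w_2 = t(w_1,w_2)\,\dot{w_1w_2}$ with $t$ a nontrivial $2$-cocycle valued in $\Ss$, so the pointwise lift of a $\W$-cocycle is generally not an $\Nn$-cocycle, and calling the correction ``bookkeeping'' understates the problem. Even if you produced some cocycle lift, you have given no reason its image in $\Hh^1(\K,\G)$ vanishes. The result you need---that for quasi-split $\G$ every $\rho\in\Hh^1(\K,\W)$ is realized by a maximal $\K$-torus in $\G$---is a genuine theorem (this is the content of Reeder's Proposition~6.1, with antecedents in Kottwitz and Raghunathan), and its proof uses more than a section of $\Nn\to\W$. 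You should either invoke that result directly or supply its argument.
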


Given a 1-cocycle $\rho \in \Zcycle^1(\K, \W)$ let $\Ss_\rho$ be the twist of $\Ss$ corresponding to $\rho$.  Since $E/\K$ is ramified, no $\Ss_\rho$ is actually unramified.  In order to work with those that are closest to unramified, we make the following definition.

\begin{definition}
We say that a torus $\Ss_\rho$ is \emph{relatively unramified} if it becomes isomorphic to $\Ss$ over the maximal unramified extension $\Ku$ of $\K$.
\end{definition}

\begin{proposition} \label{prop:ua_tori:unram_classifying_set} $ $
\begin{enumerate}
\item The torus $\Ss_\rho$ is relatively unramified if and only if the image of $\rho$ in $\Hh^1(\I, \W)$ is trivial.
\item Stable classes of relatively unramified maximal tori in $\G$ are in bijection with $\Hh^1(\GalKuK, \W^\I)$.
\end{enumerate}
\end{proposition}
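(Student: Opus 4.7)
The plan for part (1) is to argue directly from the cocycle description of the twist. The torus $\Ss_\rho$ is obtained from $\Ss_{\barK}$ by modifying the $\GalK$-action via $\rho$, so for any extension $L/\K$ the tori $\Ss_\rho$ and $\Ss$ become isomorphic over $L$ exactly when the restriction $\rho|_{\Gal(\barK/L)}$ is a coboundary. Specializing to $L = \Ku$, whose absolute Galois group is $\I$, yields that $\Ss_\rho$ is relatively unramified if and only if the image of $\rho$ in $\Hh^1(\I, \W)$ is trivial.

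For part (2), I would combine part (1) with the preceding proposition and the nonabelian inflation-restriction exact sequence. The preceding proposition identifies stable classes of maximal tori in $\G$ with $\Hh^1(\K, \W)$; by part (1), the relatively unramified ones correspond precisely to the kernel of the restriction map $\Hh^1(\K, \W) \to \Hh^1(\I, \W)$. The inflation-restriction sequence for the normal subgroup $\I \trianglelefteq \GalK$ with quotient $\GalKuK$,
\[
1 \to \Hh^1(\GalKuK, \W^\I) \to \Hh^1(\K, \W) \to \Hh^1(\I, \W),
\]
is exact as a sequence of pointed sets, so inflation is injective with image equal to this kernel. Composing these bijections gives the stated parameterization.

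The main concern is bookkeeping rather than substance. Because $\W$ is nonabelian I would invoke the nonabelian version of inflation-restriction, which is classical (see \cite{serre:GaloisCohomology}) and remains exact as a sequence of pointed sets through $\Hh^1$. I would also want to check that the $\GalKuK$-module structure on $\W^\I$ appearing in the statement is the natural one, obtained by descent from the $\GalK$-action on $\W^\I$; this descent is valid because $\I$ acts trivially on the invariants $\W^\I$ by definition. With these pieces in place no further computation is needed.
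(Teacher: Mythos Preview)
Your proposal is correct and follows essentially the same route as the paper: the paper's proof is a terse two-liner invoking the inflation--restriction sequence together with the fact that $\Hh^1(\I,\W)$ classifies stable classes of $\Ku$-tori, which is exactly your argument with part (1) phrased via that classification rather than directly via the twisting description. Your additional remarks about the nonabelian version of inflation--restriction and the descent of the $\GalK$-action on $\W^\I$ to $\GalKuK$ are appropriate bookkeeping that the paper leaves implicit.
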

\begin{proof}
Both statements follow from the inflation-restriction sequence \cite{serre:LocalFields}*{\S VII.6}
\[
0 \rightarrow \Hh^1(\GalKuK, \W^\I) \xrightarrow{\Inf} \Hh^1(\K, \W) \xrightarrow{\Res} \Hh^1(\I, \W)^{\Fr} \rightarrow \cdots
\]
and the fact that $\Hh^1(\I, \W)$ classifies stable classes of $\Ku$-tori.
\end{proof}

\subsection*{Anisotropic tori}

Suppose $\sigma \in \W^\I$ and let $\rho \in \Zcycle^1(\GalKuK, \W^\I)$ be the $1$-cocyle mapping Frobenius to $\sigma$.  We give a criterion for the torus $\Ss_\rho$ to be anisotropic and give a concrete description of $\Ss_\rho$ in this case.

\begin{proposition}
The torus $\Ss_\rho$ is anisotropic if and only if the signed cycle type of $\sigma$ has no positive cycles.
\end{proposition}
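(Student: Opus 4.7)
The plan is to pass to the character lattice: $\Ss_\rho$ is anisotropic over $\K$ if and only if $X^*(\Ss_\rho)^{\GalK} = 0$, so I will compute that fixed subgroup and determine when it vanishes.

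First I will unwind the twisted Galois action on $X^*(\Ss_\rho) = X^*(\Ss)$. Since $\rho$ is inflated from $\Zcycle^1(\GalKuK, \W^\I)$, its restriction to $\I$ is trivial, so inertia continues to act through the original involution $\tau\colon \chi_i \mapsto -\chi_{-i}$. For Frobenius I will pick a lift $\tilde\Fr \in \GalK$ that fixes $\E$ pointwise; this is possible because $\E/\K$ is ramified and $\Ku/\K$ is unramified, so $\E \cap \Ku = \K$. Such a lift acts trivially on $X^*(\Ss)$ in the original action, hence on $X^*(\Ss_\rho)$ it acts purely as $\sigma$. Therefore
\[
X^*(\Ss_\rho)^{\GalK} \;=\; \bigl( X^*(\Ss)^{\tau} \bigr)^{\sigma}.
\]

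Next I will describe both factors concretely. A short computation shows that $X^*(\Ss)^{\tau}$ is free of rank $m$ with basis $e_i := \chi_i - \chi_{-i}$ for $i = 1,\ldots,m$. Since $\sigma \in \W^\I = \Z_{\W}(\eta)$ commutes with $\eta\colon \chi_i \leftrightarrow \chi_{-i}$, and $\tau$ acts on $X^*(\Ss)$ as $-\eta$, the element $\sigma$ commutes with the $\tau$-action as well, and hence preserves this sublattice. Writing $\sigma(\chi_{i_k}) = \chi_{\epsilon_k i_{k+1}}$ with $\epsilon_k \in \{\pm 1\}$ and $i_{k+1} \in \{1,\ldots,m\}$ yields the clean formula $\sigma(e_{i_k}) = \epsilon_k e_{i_{k+1}}$.

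Finally I will argue cycle-by-cycle. Restrict $\sigma$ to the $\ZZ$-span of the $e_{i_k}$'s attached to a single signed cycle of length $r$; a vector $v = \sum_k c_k e_{i_k}$ is $\sigma$-fixed exactly when the cyclic system $c_k = \epsilon_{k-1} c_{k-1}$ admits a nonzero solution, which happens iff $\prod_k \epsilon_k = +1$, i.e.\ iff the cycle is positive in the sense of Definition \ref{def:ua_tori:positive_cycle}. Summing over all cycles yields $X^*(\Ss_\rho)^{\GalK} \ne 0$ precisely when $\sigma$ has at least one positive cycle, which is the claim. The main bookkeeping task is to keep the conventions straight --- verifying that the inflated $\rho$ is really trivial on $\I$, that $\sigma$ commutes with $\tau$ and not merely with $\eta$, and that the signs $\epsilon_k$ of the linear action on $e_{i_k}$ match the signs that define positive versus negative cycles --- after which the linear algebra is mechanical.
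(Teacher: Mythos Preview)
Your proof is correct and follows essentially the same approach as the paper: both compute $X^*(\Ss_\rho)^{\GalK}$ by noting that inertia acts through $\tau$ and Frobenius through $\sigma$, identify the $\tau$-fixed sublattice as spanned by $\chi_i-\chi_{-i}$, and then analyze the $\sigma$-action cycle by cycle. The only cosmetic difference is that in the converse direction the paper first finds the $\sigma$-fixed vectors on each cycle block of the full lattice and then checks $\tau$-invariance (also noting $\chi_0$ in the odd case), whereas you take $\tau$-invariants first and then $\sigma$-invariants; since $\sigma$ and $\tau$ commute this comes to the same thing.
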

\begin{proof}
Since $E/\K$ is ramified, $\W^\I = \W^\GalK$ and Proposition \ref{prop:ua_tori:conj_in_Bn} applies.  Suppose the order of $\sigma$ is $r$, and write $\tilde{\rho}$ for the homomorphism $\GalK \to \W \rtimes \GalEK$ defined from $\rho$ by inflation.  The image of $\tilde{\rho}$ will be isomorphic to $\ZZ/r\ZZ \times \ZZ/2\ZZ$, generated by $(\sigma, 1)$ and $(1,\tau)$.  The characters in $X^*(\Ss_\rho)$ fixed by $(1,\tau)$ are spanned by $\chi_i - \chi_{-i}$ for $1 \le i \le m$.

If $\sigma$ has a positive cycle $C$ then $\sum_{i \in C} (\chi_i - \chi_{-i})$ provides a nonzero element of $X^*(\Ss_\rho)^{\GalK}$, so $\Ss_\rho$ cannot be anisotropic.

Conversely, if $\sigma$ is a product of disjoint negative cycles then we may decompose $X^*(\Ss_\rho)$ as a corresponding direct sum.  If $(C, -C)$ is a negative cycle then the only character fixed by the cycle is $\sum_{i \in C} (\chi_i + \chi_{-i})$, which is not fixed by $(1, \tau)$.  Moreover, for odd $n$, $\chi_0$ is negated by $(1, \tau)$.  Thus $X^*(\Ss_\rho)^{\GalK} = 0$ and $\Ss_\rho$ is anisotropic.
\end{proof}

\subsection*{Elemental tori} \label{sec:ua_tori:elem_tori}

The the decomposition of $\sigma \in \W^\I$ into negative cycles gives a corresponding decomposition of the torus $\Ss_\rho$ as a product of simpler tori.  We first give an intrinsic definition of these ``elemental'' tori, then prove the product decomposition, and finally study elemental tori in more detail.

For any $r$, let $\K_r$ be the unramified extension of $\K$ of degree $r$, and note that $\E_r = \E \cdot \K_r$ is an unramified extension of $E$ of degree $r$.
We have $\Gal(\E_r/\K) \cong \ZZ/r\ZZ \times \ZZ/2\ZZ$, since the element $\tau_r$ of order $2$ fixing $\K_r$ is central.  Let $\sigma_r$ be the image of $\Fr \in \GalE$ in $\Gal(\E_r/\K)$; it will be an element of order $r$ in $\Gal(\E_s/\K)$ fixing $\E$.

We will assume from now on that $s = 2r$ is even.  In this case, define $\eta_s = \tau_s \sigma_s^r$ and let $L_r$ be the fixed field of $\eta_s$.  The diagram of fields is:

\[
\begin{tikzcd}[font=\normalsize,row sep={{{{35,between origins}}}},column sep={{{{40,between origins}}}},/tikz/commutative diagrams/arrows=dash]
 {} & & \E_s \arrow{dl}[swap]{\Gal(\E_s/\E_r)=\langle\sigma_s^r\rangle} \arrow{dr}{\Gal(\E_s/\K_s)=\langle\tau_s\rangle} \arrow{d}[description]{\langle \eta_s \rangle} & &\\
 {} & \E_r \dlar \drar & L_r \dar & \K_s \dlar \\
 \E \drar & & \K_r \dlar \\
 {} & \K
\end{tikzcd}
\]

We will frequently suppress the subscript and write $\tau$ for $\tau_s$, $\sigma$ for $\sigma_s$, $\eta$ for $\eta_s$ and $L$ for $L_r$.  Note that both $\tau \in \Gal(E_s/\K_s)$ and $\eta$ induce $\tau \in \Gal(E/\K)$ on $E$.

We define a torus $\T_s$ over $\K$ by 
\begin{equation} \label{eq:ua_tori:Tsdef}
\T_s = \Res_{L/\K} \U_1(E_s/L).
\end{equation}
These tori will form the building blocks for all maximal, relatively unramified, anisotropic tori:

\begin{theorem} \label{thm:ua_tori:elemental_decomposition}
Suppose that $\T \subset \G$ is a maximal, relatively unramified anisotropic torus, whose stable class corresponds to $\sigma \in \W^\I$.  Suppose $\sigma = \sigma_1\cdots \sigma_j$ is the decomposition of $\sigma$ into disjoint negative cycles, and let $s_i$ be the length of $\sigma_i$.
\begin{enumerate}
\item If $\G = \U_{2m+1}$, then $\T \simeq \prod_{i=1}^j \T_{s_i} \times \U_1$.
\item If $\G = \U_{2m}$, then $\T \simeq \prod_{i=1}^j \T_{s_i}$.
\end{enumerate}
\end{theorem}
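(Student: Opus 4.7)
The plan is to reconstruct $\T$ from its character lattice $X^*(\T)$ viewed as a $\GalK$-module (using the anti-equivalence between tori over $\K$ and finite-rank free $\GalK$-modules), and to decompose this lattice along the cycle structure of $\sigma$. Working with the usual basis $\{\chi_{\pm i}\}$ of $X^*(\T)$ (together with $\chi_0$ if $\G = \U_{2m+1}$), Frobenius acts as $\sigma$ and $(1,\tau)$ acts by $\chi_i \mapsto -\chi_{-i}$, exactly as in the anisotropy computation preceding the theorem. Because each negative cycle $\sigma_i$ touches a set of indices $\{i_1,\ldots,i_{r_i}\}$ together with their negatives, the partition induced by $\sigma_1,\ldots,\sigma_j$ is stable under both actions (and $\chi_0$ is stable separately). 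This produces a $\GalK$-stable orthogonal splitting
\[
X^*(\T) \;=\; M_0 \oplus M_1 \oplus \cdots \oplus M_j
\]
where $M_0 = \ZZ\chi_0$ is present only in the odd case and each $M_i$ has rank $2r_i$ with basis $\{\chi_{\pm i_k}\}_{k=1}^{r_i}$. Dualizing yields a product decomposition $\T \simeq \T^{(0)} \times \T^{(1)} \times \cdots \times \T^{(j)}$ defined over $\K$.

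Next I would identify each factor. On $M_0$ Frobenius is trivial while $(1,\tau)$ acts by $-1$, which is exactly the character lattice of $\U_1(\E/\K)$ and handles the $\U_1$ factor in the odd case. For $M_i$, set $s = 2r_i$ and observe that $\eta_s = \tau_s \sigma_s^{r_i} \in \Gal(\E_s/\K)$ acts on $\chi_{i_1}$ by
\[
\chi_{i_1} \;\xrightarrow{\;\sigma^{r_i}\;}\; \chi_{-i_1} \;\xrightarrow{\;\tau\;}\; -\chi_{i_1},
\]
so $\eta_s$ preserves the line $\ZZ\chi_{i_1}$. Since the Frobenius orbit of $\chi_{i_1}$ already has length $2r_i$ and $(1,\tau)$ sends $\ZZ\chi_{\pm i_k}$ to $\ZZ\chi_{\mp i_k}$, the orbit of the line $\ZZ\chi_{i_1}$ under $\Gal(\E_s/\K)$ has exactly $2r_i$ elements. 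Orbit-stabilizer applied to the group of order $[\E_s:\K] = 4r_i$ forces the stabilizer of $\ZZ\chi_{i_1}$ to be the order-two subgroup $\langle \eta_s \rangle$. Hence the $\GalK$-stabilizer of $\ZZ\chi_{i_1}$ is $\Gal(\bar\K/L_{r_i})$ and its quotient $\Gal(\E_s/L_{r_i}) = \langle\eta_s\rangle$ acts by $-1$ on $\ZZ\chi_{i_1}$, matching the $\Gal(\bar\K/L_{r_i})$-module $X^*\bigl(\U_1(\E_s/L_{r_i})\bigr)$. The standard realization of a transitive permutation representation as an induction then gives
\[
M_i \;\cong\; \operatorname{Ind}_{\Gal(\bar\K/L_{r_i})}^{\GalK} X^*\!\bigl(\U_1(\E_s/L_{r_i})\bigr) \;=\; X^*(\T_{s_i})
\]
as $\GalK$-modules, so $\T^{(i)} \simeq \T_{s_i}$ over $\K$. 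Assembling the factor-wise isomorphisms proves the theorem.

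The step I expect to be the main obstacle is the stabilizer computation: one must rule out the possibility that some other element of $\Gal(\E_s/\K)$ preserves $\ZZ\chi_{i_1}$, which needs a careful case analysis within the semidirect product $\W \rtimes \Gal(\E/\K)$ rather than a naive count, and one must verify that the sign appearing when $\eta_s$ is applied really gives the nontrivial rather than trivial character of $\Gal(\E_s/L_{r_i})$. A minor bookkeeping matter is reconciling the ``length of a cycle'' convention of Definition \ref{def:ua_tori:positive_cycle} with the even-subscript requirement in the definition of $\T_s$; throughout the identification one should read $s_i = 2r_i$ so that $L_{r_i}$ and $\E_{s_i}$ have the expected degrees.
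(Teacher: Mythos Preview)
Your proposal is correct and follows essentially the same route as the paper: both arguments reduce to matching $\GalK$-modules by decomposing $X^*(\T)$ along the negative cycles of $\sigma$ and identifying each rank-$2r_i$ summand with $X^*(\T_{s_i})$, together with the extra $\ZZ\chi_0$ in the odd case. The only difference is the direction of the identification: the paper first unpacks $X^*(\T_s)\cong \Ind^{\GalK}_{\GalL}X^*(\U_1(E_s/L))$ using powers of $\sigma$ as coset representatives, writes the action explicitly as $\sigma(\chi_i)=\chi_{\upsilon(i)}$, $\eta(\chi_i)=-\chi_i$, $\tau(\chi_i)=-\chi_{-i}$, and then observes this matches each cycle block of $X^*(\T)$; you instead start from the cycle block and recognize it as the induced module via orbit--stabilizer. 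These are the same computation read in opposite directions. Your stated worry about the stabilizer is already settled by your own orbit count (the Frobenius orbit of $\ZZ\chi_{i_1}$ already exhausts all $2r_i$ lines, so the stabilizer in a group of order $4r_i$ must be exactly $\langle\eta_s\rangle$), and your remark about reading $s_i=2r_i$ is exactly the convention the paper adopts tacitly when it later writes $\us=(2r_1,\ldots,2r_j)$.
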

\begin{proof}
In order to define each of these isomorphisms of tori, we may give a $\GalK$-equivariant isomorphism between $X^*(\T)$ and the character group of each right hand side.  The character group $X^*(\T_s)$ is easy to describe:
\[
X^*(\T_s) \cong \Ind^{\GalK}_{\GalL} X^*(\U_1(E_s/L)).
\]
Since $\U_1(E_s/L)$ splits over $E_s$, $\GalK$ acts on $X^*(\T_s)$ through its quotient $\Gal(E_s/\K)$.  Let $\{\chi\}$ be a basis for $X^*(\U_1(E_s/L)) \cong \ZZ$.  Each coset of $\Gal(E_s/L)$ in $\Gal(E_s/\K)$ contains a unique power of $\sigma$, and we can choose a basis for the induction where each basis function evaluates to $\chi$ on one power of $\sigma$ and zero on the others.  It will be convenient to denote this basis by $\{\chi_{-n}, \ldots, \chi_{-1}, \chi_1, \ldots, \chi_n\}$, where $\sigma$ acts by the cyclic permutation $\upsilon$ of $\{-n, \ldots, -1, 1, \ldots, n\}$ defined by

\[
\begin{tikzcd}[font=\normalsize,column sep={{{{35,between origins}}}}]
(-n) \arrow[out=315, in=225, looseness=0.4]{rrrr} & \cdots \lar & (-2) \lar & (-1) \lar & (1) \rar & (2) \rar & \cdots \rar & (n) \arrow[out=135,in=45,looseness=0.4]{llll}
\end{tikzcd}
\]

Then the action of $\Gal(E_s/\K)$ on $X^*(\T_s)$ is given by 
\begin{equation} \label{eq:ua_tori:Tsact}
\begin{aligned} 
\sigma(\chi_i) &= \chi_{\upsilon(i)} \\
\eta(\chi_i) &= -\chi_{i},
\end{aligned}
\end{equation}
and thus $\tau(\chi_i) = -\chi_{-i}$.

If $\sigma$ breaks up as the product of disjoint negative cycles of lengths $s_1, s_2, \ldots, s_k$ and $\G = \U_{2m}$, then $X^*(\T)$ will decompose as a representation of $\GalK$ into a direct sum of submodules of dimensions $2s_1, 2s_2, \ldots, 2s_k$, each spanned by the $\chi_i$ for $i$ occurring in a single negative cycle.  The action of $\GalK$ is precisely the one on $\T_s$ given in \eqref{eq:ua_tori:Tsact}.

The case that $\G=\U_{2m+1}$ is similar, but there will be an additional $1$-dimensional summand on which $\GalK$ acts through $\GalEK$, with $\tau$ negating $\chi_{0}$.
\end{proof}

\subsection*{Filtrations and N\'eron models}

Moy and Prasad define a decreasing filtration $\T_s(K)^r$ on $\T_s(K)$ (c.f. \cite{moy-prasad:96a} and \cite{yu:03a}*{\S 4-5}), where $\T_s(K)^0$ is given by the $\OK$-points of the identity component $\TT_s^\circ(\OK) \subset \TT_s(\OK) = \T_s(K)$ of the N\'eron model of $\T_s$.

\begin{proposition} \label{prop:ua_tori:Ts_components}
The N\'eron model $\TT_s$ of $\T_s$ is connected.
\end{proposition}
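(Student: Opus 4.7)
My plan is to exhibit $\T_s$ as a Weil restriction of an \emph{unramified} norm-one torus, for which the N\'eron model is manifestly connected, and then to invoke the compatibility of N\'eron models with Weil restriction.

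First I would verify that the extension $\E_s/L$ is unramified. Since $\tau_s$ fixes $\K_s$ and the residue extension of $\E_s/\K_s$ is trivial (the ramification of $\E_s/\K$ being absorbed entirely by $\E_s/\K_s$), $\tau_s$ acts trivially on the residue field of $\E_s$, which is $\k_s$. On the other hand $\sigma_s$ restricts to a generator of $\Gal(\k_s/\k) \cong \ZZ/s\ZZ$, so $\sigma_s^r$ has order $s/r = 2$ on $\k_s$. Hence $\eta_s = \tau_s \sigma_s^r$ acts nontrivially on $\k_s$, forcing $[\k_L:\k] = r$. Combined with $[L:\K] = s$, this gives $e(L/\K) = 2$, and therefore $e(\E_s/L) = e(\E_s/\K)/e(L/\K) = 1$.

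Second, because $\E_s/L$ is an unramified quadratic extension, $\U_1(\E_s/L)$ admits a natural smooth $\mathcal{O}_L$-torus model $\underline{\U}$, whose $R$-points for any $\mathcal{O}_L$-algebra $R$ are $\{\alpha \in (R \otimes_{\mathcal{O}_L} \mathcal{O}_{\E_s})^\times : \alpha \cdot \eta_s(\alpha) = 1\}$. Its generic fiber is $\U_1(\E_s/L)$ and its special fiber is the connected $\k_L$-torus $\U_1(\k_{\E_s}/\k_L)$. Smoothness together with the N\'eron mapping property (automatic for a torus scheme over a DVR whose generic fiber agrees with $\U_1(\E_s/L)$) identify $\underline{\U}$ with the N\'eron model of $\U_1(\E_s/L)$, and it is connected.

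Third, formation of N\'eron models commutes with Weil restriction along finite separable extensions (Bosch--L\"utkebohmert--Raynaud, \emph{N\'eron Models}, Ch.\ 7), so the N\'eron model of $\T_s = \Res_{L/\K} \U_1(\E_s/L)$ is $\Res_{\mathcal{O}_L/\OK}\underline{\U}$. A local computation shows that Weil restriction along the finite flat extension $\mathcal{O}_L/\OK$ preserves smoothness and geometric connectedness of fibers (after base-change to $\bar{\k}$ the base ring becomes a product of copies of $\bar\k[\varepsilon]/\varepsilon^2$, and $\Res$ of $\mathbb{G}_m$ over each factor is the connected group $\mathbb{G}_m \times \Ga$), so $\TT_s$ is connected and $\TT_s = \TT_s^\circ$.

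The main obstacle I anticipate is rigorously justifying the third step in the presence of ramification in $L/\K$. As a self-contained backup I would invoke Raynaud's formula $\pi_0(\TT_{s,\bar k}) \cong \Hh^1(\I, X_*(\T_s))$: inertia in $\Gal(\E_s/\K)$ has order two and is generated by $\tau_s$, which acts on the lattice from the proof of Theorem \ref{thm:ua_tori:elemental_decomposition} by $\chi_i \mapsto -\chi_{-i}$; both $\ker(1+\tau_s)$ and $\operatorname{im}(1-\tau_s)$ then equal the sublattice of symmetric tuples $\{(a_i) : a_i = a_{-i}\}$, so $\Hh^1 = 0$ and the component group is trivial.
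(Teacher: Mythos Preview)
Your main argument is essentially the paper's proof: it also cites Bosch--L\"utkebohmert--Raynaud for the compatibility of N\'eron models with Weil restriction (\S7.6, Prop.~6) and for the connectedness of N\'eron models of unramified anisotropic tori such as $\U_1(\E_s/L)$ (\S10.1, Prop.~3), without further elaboration. Your steps one through three just unpack these two citations---verifying that $\E_s/L$ is unramified, writing the integral model of $\U_1$ explicitly, and checking that $\Res_{\mathcal{O}_L/\OK}$ preserves connectedness---so the approaches coincide; the backup via $\pi_0(\TT_{s,\bar\k}) \cong \Hh^1(\I, X_*(\T_s))$ is a correct independent verification that the paper does not use.
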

\begin{proof}
Formation of the N\'eron model commutes with Weil restriction \cite{bosch-lutkebohmert-reynaud:NeronModels}*{\S 7.6, Prop. 6}, and N\'eron models of unramified anisotropic tori such as $\U_1(E_s/L)$ are connected \cite{bosch-lutkebohmert-reynaud:NeronModels}*{\S 10.1, Prop. 3}.
\end{proof}

\begin{corollary}
The N\'eron model $\TT$ of a maximal, relatively unramified, anisotropic torus in $\U_{n}(E/\K)$ will be connected if $n$ is even and will have component group $\ZZ/2\ZZ$ if $n$ is odd.
\end{corollary}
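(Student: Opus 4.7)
The plan is to reduce the claim to a product of known pieces via the elemental decomposition of Theorem~\ref{thm:ua_tori:elemental_decomposition}, and then invoke Proposition~\ref{prop:ua_tori:Ts_components} for the elemental factors together with a direct analysis of the $\U_1(E/\K)$ factor that appears in odd rank.

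First I would apply Theorem~\ref{thm:ua_tori:elemental_decomposition} to write $\T$ as a product of the elemental tori $\T_{s_i}$ and, when $n$ is odd, an additional factor of $\U_1 = \U_1(E/\K)$. Formation of Néron models commutes with finite products, so $\TT \simeq \prod_i \TT_{s_i}$ (times $\TT_{\U_1}$ in the odd case), and the component group of a product of smooth group schemes is the product of the component groups. By Proposition~\ref{prop:ua_tori:Ts_components} each $\TT_{s_i}$ is connected, so the component group of $\TT$ is trivial in the even case and equal to $\pi_0(\TT_{\U_1})$ in the odd case.

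It then remains to show that the Néron model of $\U_1(E/\K)$, for a ramified quadratic extension $E/\K$, has component group $\ZZ/2\ZZ$. The clean way is to use the short exact sequence of $\OK$-group schemes
\[
1 \to \TT_{\U_1} \to \Res_{\OO_E/\OK}\Gm \xrightarrow{\Nm} \Gm \to 1,
\]
where the flanking groups are smooth with connected fibers. Passing to $\OK$-points and reducing modulo the uniformizer, and using that for a ramified extension the norm map on residue fields is the identity on $\kk$, one sees that the cokernel on special fibers is trivial while the image of the identity component of $\Res_{\OO_E/\OK}\Gm$ under the norm is $\Gm^\circ$; comparing this with the actual surjection of Néron models forces $\pi_0(\TT_{\U_1}) = \ZZ/2\ZZ$. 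Equivalently, one can cite \cite{bosch-lutkebohmert-reynaud:NeronModels}*{\S 10.1} directly: for a ramified separable quadratic extension the norm-one torus has component group of order equal to the ramification index, namely $2$.

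The main obstacle is the second step: Proposition~\ref{prop:ua_tori:Ts_components} is stated only for the \emph{unramified} anisotropic tori $\U_1(E_s/L)$ (and their Weil restrictions), so it does not cover the ramified $\U_1(E/\K)$ appearing when $n$ is odd. Once this component-group calculation for $\U_1(E/\K)$ is in hand, the rest is the formal product decomposition above.
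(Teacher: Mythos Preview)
Your approach is exactly the paper's: reduce via Theorem~\ref{thm:ua_tori:elemental_decomposition} to a product, apply Proposition~\ref{prop:ua_tori:Ts_components} to the elemental factors, and handle the extra $\U_1(E/\K)$ factor separately in the odd case. The paper simply asserts that the N\'eron model of $\U_1(E/\K)$ has two components without further argument, whereas you attempt to justify it; your citation of \cite{bosch-lutkebohmert-reynaud:NeronModels}*{\S 10.1} is the right place to point, though your exact-sequence sketch is muddled (for a totally ramified quadratic extension the induced norm on residue fields is squaring, not the identity, and your sentence about ``the image of the identity component \dots\ is $\Gm^\circ$'' does not parse since $\Gm$ is already connected).
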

\begin{proof}
This result follows from Theorem \ref{thm:ua_tori:elemental_decomposition} and Proposition \ref{prop:ua_tori:Ts_components}, together with the fact that the N\'eron model of $\U_1(E/\K)$ has two components.
\end{proof}

\section{Embedding tori in other unitary groups} \label{sec:embedding_tori}

For each elementary torus $\T_s$, we will define a family of Hermitian spaces $\{V_{s,\kappa}\}_{\kappa \in L^\times}$, together with an embedding of $\T_s$ into each unitary group $\U(V_{s,\kappa})$.  These unitary groups are not necessarily quasi-split.  Instead, we get embeddings into both pure inner forms of $\G$, which will eventually yield representations of the different pure inner forms.

As an $\E$-vector space, $V_{s,\kappa}$ is simply $\E_s$.  Following Euler (see \cite{serre:LocalFields}*{p. 56}), for any $\kappa \in \E_s$, define a bilinear form $\phi_\kappa$ on $V_{s,\kappa}$ by
\[
\phi_\kappa(x, y) = \Tr_{E_s/E}\left(\frac{\kappa}{\pi_L} \cdot x \cdot \eta(y)\right).
\]
Here $\pi_L$ is a uniformizer of $L$ with $\Tr_{L/K_r} \pi_L = 0$, and we divide by $\pi_L$ in the definition of $\phi_\kappa$ so that Proposition \ref{prop:induction:quasi-splitness} holds.

\begin{proposition}
The bilinear form $\phi_\kappa$ is Hermitian if and only if $\kappa \in L$.
\end{proposition}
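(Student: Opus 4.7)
The plan is to check each of the two axioms of a Hermitian form in turn and see where the hypothesis $\kappa \in L$ must enter.

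First, $E$-linearity in the first argument holds for every $\kappa \in E_s$: elements of $E$ are fixed by $\Gal(E_s/E)$ and so pull out of the trace $\Tr_{E_s/E}$, so this axiom places no constraint on $\kappa$.

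The real content is in the conjugate symmetry $\phi_\kappa(x,y) = \tau\phi_\kappa(y,x)$. The key move is to compute the right-hand side by lifting $\tau$ through the restriction map $\Gal(E_s/\K) \to \Gal(E/\K)$; the convenient lift is $\eta$ itself, which restricts to $\tau$ on $E$ by the construction preceding \eqref{eq:ua_tori:Tsdef}. Since $\Gal(E_s/\K)$ is abelian (it is the Galois group of the compositum of the unramified extension $K_s/\K$ and the ramified quadratic $E/\K$), conjugation by $\eta$ preserves $\Gal(E_s/E)$ setwise, so $\tau \circ \Tr_{E_s/E} = \Tr_{E_s/E} \circ\, \eta$ on all of $E_s$.

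Applying this to $z = \frac{\kappa}{\pi_L}\, y\, \eta(x)$ and using (i) $\eta^2 = 1$, which follows from $\tau_s^2 = \sigma_s^s = 1$ together with the commutativity just noted, and (ii) $\eta(\pi_L) = \pi_L$, since $\pi_L \in L$, the computation collapses to
\[
\tau\phi_\kappa(y,x) = \Tr_{E_s/E}\!\left( \frac{\eta(\kappa)}{\pi_L}\, x\, \eta(y)\right).
\]
The Hermitian condition therefore becomes
\[
\Tr_{E_s/E}\!\left( \frac{\kappa - \eta(\kappa)}{\pi_L} \cdot x\eta(y) \right) = 0 \qquad \text{for all } x,y \in E_s.
\]
As $x$ alone ranges over $E_s$ the product $x\eta(y)$ already exhausts $E_s$, so non-degeneracy of the trace form of the separable extension $E_s/E$ forces $\eta(\kappa) = \kappa$, i.e.\ $\kappa \in L$; running the chain of equalities in reverse shows this condition is also sufficient.

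The main piece of care is the Galois bookkeeping at the start: confirming that $\eta$ lifts $\tau|_E$, that $\eta^2 = 1$, and that $\pi_L$ is $\eta$-fixed. All three follow immediately from the field diagram preceding \eqref{eq:ua_tori:Tsdef}, so no serious obstacle is anticipated.
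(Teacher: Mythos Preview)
Your argument is correct and follows exactly the route the paper takes: the paper's one-line proof (``$\phi_\kappa(x,y)=\tau\phi_\kappa(y,x)$ if and only if $\kappa\in L$ since $\eta$ induces $\tau$ on $E$'') is precisely the computation you have written out in full, using $\tau\circ\Tr_{E_s/E}=\Tr_{E_s/E}\circ\eta$, $\eta^2=1$, $\eta(\pi_L)=\pi_L$, and nondegeneracy of the trace form. You have simply supplied the details the paper left implicit.
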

\begin{proof}
The trace pairing is bilinear and nondegenerate.  Moreover, 
$\phi_\kappa(x, y) = \tau \phi_\kappa(y, x)$ if and only if $\kappa \in L$ since $\eta$ induces $\tau$ on $\E$.
\end{proof}

From now on we will assume that $\kappa \in L^\times$, in which case $V_{s, \kappa}$ is a Hermitian space.  Since 
\[
\T_s(\K) = \{\alpha \in \E_s \st \Nm_{E_s/L} \alpha = 1\},
\]
we have an embedding
\begin{align*}
\T_s(\K) &\rightarrow \U(V_{s, \kappa}), \\
\alpha &\mapsto \mbox{ multiplication by $\alpha$.}
\end{align*}

\begin{proposition} \label{prop:induction:quasi-splitness}
The unitary group $\U(V_{s, \kappa})$ is quasi-split if and only if $\kappa \in \Nm_{\E_s/L}(\E_s^\times)$.
\end{proposition}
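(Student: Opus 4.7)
By Proposition \ref{prop:background:Uclass}, since $\dim_E V_{s,\kappa} = s = 2r$ is even, $\U(V_{s,\kappa})$ is quasi-split if and only if $V_{s,\kappa} \cong \Hyp^r$, which is equivalent to $dV_{s,\kappa} \equiv (-1)^r \pmod{\Nm_{E/\K}(E^\times)}$. The plan is to anchor the discriminant at $\kappa = 1$, compute $dV_{s,\kappa}/dV_{s,1}$ explicitly, and then invoke local class field theory to translate into a norm condition.

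\textbf{Step 1 (anchor).} I first show that $V_{s,1}$ is hyperbolic by exhibiting $E_r \subset E_s = V_{s,1}$ as a totally isotropic $E$-subspace of dimension $r$. Since $\sigma_s^r$ fixes $E_r$ pointwise, $\eta|_{E_r}$ coincides with the nontrivial element $\tau_r$ of $\Gal(E_r/\K_r)$, so for $x \in E_r$
\[
\phi_1(x,x) = \Tr_{E_s/E}\bigl(\Nm_{E_r/\K_r}(x)/\pi_L\bigr).
\]
The argument lies in $L \subset E_s$; under $E_s \cong E \otimes_\K L$, the restriction of $\Tr_{E_s/E}$ to $L$ equals $\Tr_{L/\K}$, and transitivity through $\K_r$ together with $\Tr_{L/\K_r}(\pi_L^{-1}) = 0$ (immediate from $\Tr_{L/\K_r}(\pi_L) = 0$, since the nontrivial element of $\Gal(L/\K_r)$ sends $\pi_L$ to $-\pi_L$) gives $\phi_1(x,x) = 0$. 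Polarization for Hermitian forms, valid since $p \ne 2$, upgrades this to $\phi_1(x,y) = 0$ for all $x,y \in E_r$, so $V_{s,1} \cong \Hyp^r$.

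\textbf{Step 2 (discriminant).} Choose a $\K$-basis $\{l_1, \ldots, l_s\}$ of $L$, which is also an $E$-basis of $E_s \cong E \otimes_\K L$. The Gram matrix of $\phi_\kappa$ on this basis has entries $\Tr_{L/\K}(\kappa l_i l_j/\pi_L)$ and factors as $N^T D_\kappa N$, where $N_{g,i} = g(l_i)$ and $D_\kappa$ is diagonal with entries $g(\kappa/\pi_L)$ indexed by $g \in \Gal(L/\K)$. Taking determinants yields
\[
dV_{s,\kappa}/dV_{s,1} \equiv \Nm_{L/\K}(\kappa) \pmod{(\K^\times)^2},
\]
hence also modulo $\Nm_{E/\K}(E^\times)$.

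\textbf{Step 3 (LCFT).} Because $E \cap L = \K$ and $E \cdot L = E_s$, restriction $\Gal(E_s/L) \to \Gal(E/\K)$ is an isomorphism, and functoriality of the reciprocity map produces a commutative diagram
\[
\begin{tikzcd}
L^\times/\Nm_{E_s/L}(E_s^\times) \arrow[d, "\Nm_{L/\K}"'] \arrow[r, "\cong"] & \Gal(E_s/L) \arrow[d, "\cong"] \\
\K^\times/\Nm_{E/\K}(E^\times) \arrow[r, "\cong"] & \Gal(E/\K)
\end{tikzcd}
\]
showing that $\Nm_{L/\K}(\kappa) \in \Nm_{E/\K}(E^\times)$ if and only if $\kappa \in \Nm_{E_s/L}(E_s^\times)$. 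Chaining Steps 1--3 gives: $\U(V_{s,\kappa})$ is quasi-split iff $V_{s,\kappa} \cong V_{s,1}$ iff $dV_{s,\kappa}/dV_{s,1}$ is trivial modulo $\Nm_{E/\K}(E^\times)$ iff $\kappa \in \Nm_{E_s/L}(E_s^\times)$.

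The main obstacle is Step 1: identifying $E_r$ as the correct isotropic subspace and chasing the cascade of trace identities requires some care, and this is precisely where the normalizing choice $\Tr_{L/\K_r}\pi_L = 0$ in the definition of $\phi_\kappa$ earns its keep. Once $V_{s,1}$ is fixed as the hyperbolic anchor, the discriminant computation of Step 2 and the LCFT input of Step 3 are routine.
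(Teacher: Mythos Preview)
Your proof is correct and proceeds along a different line from the paper's. The paper first reduces to a two-dimensional Hermitian space $V_\kappa'$ over $E_r/\K_r$ via the identity $\phi_\kappa = \Tr_{E_r/E}\circ\phi_\kappa'$, proves a lemma that $\U(V_{s,\kappa})$ is quasi-split iff $\U(V_\kappa')$ is (the nontrivial direction is a dimension count showing an $r$-dimensional isotropic subspace of $V_{s,\kappa}$ forces an isotropic vector in $V_\kappa'$), and then checks the two coset representatives $\kappa\in\{1,\pi_L\}$ directly, finding $\pi_L$ isotropic when $\kappa=1$.

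You instead stay in dimension $s$ throughout. Your anchor step is essentially the paper's isotropic vector $\pi_L\in V_1'$ blown up to the full subspace $E_r\subset V_{s,1}$, but you obtain it by a different trace computation. Where the paper disposes of the remaining coset by an ad hoc discriminant check in dimension~$2$, you compute $dV_{s,\kappa}/dV_{s,1}=\Nm_{L/\K}(\kappa)$ uniformly via the factorization $N^{T}D_\kappa N$ and then invoke class field theory for the tower $E_s/L/\K$ with $E\cdot L=E_s$, $E\cap L=\K$ to convert the $\Nm_{L/\K}$-condition into the $\Nm_{E_s/L}$-condition. Your route trades the paper's reduction lemma for a global discriminant formula plus a standard LCFT functoriality; it is slightly heavier in inputs but has the advantage of producing the discriminant of $V_{s,\kappa}$ explicitly, which is exactly what the subsequent corollary needs.
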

\begin{proof}
We first reduce to the case $s = 2$.  Let $V_\kappa'$ be the two-dimensional $E_r$-vector space $E_s$ with Hermitian pairing $\phi_\kappa'$ (relative to the quadratic extension $\E_r/\K_r$) defined by
\[
\phi_\kappa'(x, y) = \Tr_{\E_s/\E_r}\left(\frac{\kappa}{\pi_L} \cdot x \cdot \eta(y)\right).
\]
We can reconstruct $\phi_\kappa$ from $\phi_\kappa'$ via the identity $\phi_\kappa = \Tr_{\E_r/\E} \circ \phi_\kappa'$.  We relate $V_{s,\kappa}$ to $V_\kappa'$ in the following lemma.

\begin{lemma}
The unitary group $\U(V_{s, \kappa})$ is quasi-split if and only if $\U(V_\kappa')$ is quasi-split.
\end{lemma}
\begin{proof}
If $\U(V_\kappa')$ is quasi-split then there is a nonzero isotropic vector $v \in V_\kappa'$.  One may check that $\E_r \cdot v$ is an $r$-dimensional isotropic subspace of $V_{s, \kappa}$, and thus $\U(V_{s,\kappa})$ is quasi-split.

Conversely, suppose that $X \subset V_{s, \kappa}$ is an $r$-dimensional isotropic subspace.  Since $\Tr_{\E_r/\E}$ is $\E$-linear, the set
\[
Y = \{y \in \E_r \st \Tr_{\E_r/\E}(y) = 0\}
\]
is an $(r-1)$-dimensional $E$-subspace of $E_r$.  The composition 
\[
X \xhookrightarrow{\phantom{abc}} \E_s \xlongrightarrow{\Delta} \E_s \times \E_s \xrightarrow{\phi_\kappa'} \E_r
\]
is $\K$-linear, and has image contained in $Y$.  But $\dim_\K X = 2r$ while $\dim_K Y = 2r-2$, so the composition has nontrivial kernel.  This yields a nonzero isotropic vector in $V_\kappa'$ and finishes the proof of the lemma.
\end{proof}

Suppose that $\kappa_1, \kappa_2 \in E_s^\times$ satisfy $\kappa_1 = \Nm_{\E_s/L}(\alpha) \kappa_2$.  Then $x \in \E_s$ is isotropic for $\phi_{\kappa_1}'$ if and only if $\alpha x$ is isotropic for $\phi_{\kappa_2}'$,
and $\U(V_{\kappa_1}')$ is quasi-split if and only if $\U(V_{\kappa_2}')$ is quasi-split.  So we need only consider $\kappa=1$ and $\kappa=\pi_L$, representatives for the two cosets of $\Nm_{E_s/L} \E_s^\times$ in $L^\times$.

If $\kappa = 1$ then $\pi_L \in V_1'$ is isotropic and thus $\U(V_{s,1})$ is quasi-split by the lemma.  An easy computation shows that when $\kappa = \pi_L$, the space $V_{\kappa}'$ has a different discriminant, so $\U(V_{s,\pi_L})$ is not quasi-split.
\end{proof}

Let $u \in \K^\times$ be a non-square unit (and thus $u \notin \Nm_{\E/\K} \E^\times$). 
\begin{corollary} \label{cor:induction:discVskappa}
We have $\disc(V_{s, \kappa}) \equiv u^{v_L(\kappa) + r (q-1)/2} \pmod{\Nm_{\E/\K} \E^\times}$.
\end{corollary}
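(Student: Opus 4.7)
The plan is to bypass a direct trace-form computation and instead pin down $V_{s,\kappa}$ up to isometry via its dimension and whether $\U(V_{s,\kappa})$ is quasi-split, then convert this into an explicit formula for the discriminant.

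First I would check that $\E_s/L$ is unramified of degree $2$: both $\E_s/\K$ and $L/\K$ have ramification index $2$ (inherited from $\E/\K$ and from $L/\K_r$, respectively), so $e(\E_s/L) = 1$. It follows that $\Nm_{\E_s/L}(\E_s^\times) = \{\kappa \in L^\times : v_L(\kappa) \text{ is even}\}$, and Proposition~\ref{prop:induction:quasi-splitness} reduces to the statement that $\U(V_{s,\kappa})$ is quasi-split if and only if $v_L(\kappa)$ is even.

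Since $\dim_\E V_{s,\kappa} = 2r$ is even, Proposition~\ref{prop:background:Uclass}(1) classifies $V_{s,\kappa}$ by its discriminant modulo norms: the quasi-split case forces $\disc(V_{s,\kappa}) \equiv (-1)^r$, while the non-quasi-split case yields the unique other element of $\K^\times/\Nm_{\E/\K}\E^\times$. Since this quotient has order $2$ with nontrivial class represented by $u$, both cases combine into
\[
\disc(V_{s,\kappa}) \equiv (-1)^r \cdot u^{v_L(\kappa)} \pmod{\Nm_{\E/\K}\E^\times}.
\]

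Finally I would express $(-1)^r$ in terms of $u$ using residue-field arithmetic. A unit of $\K$ is a norm from $\E^\times$ if and only if its image in $k^\times$ is a square (by local class field theory for the tame ramified extension $\E/\K$), and $-1 \in k^\times$ is a square precisely when $(q-1)/2$ is even. Hence $-1 \equiv u^{(q-1)/2} \pmod{\Nm_{\E/\K}\E^\times}$; raising to the $r$-th power and combining with the display above gives the claimed formula.

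The main subtlety lies in the synthesis of the middle step: matching the quasi-split dichotomy supplied by Proposition~\ref{prop:induction:quasi-splitness} with the discriminant dichotomy supplied by Proposition~\ref{prop:background:Uclass}(1), and verifying that these two dichotomies carve $L^\times$ into the same two cosets. Once that correspondence is in place, the rest of the argument collapses to the standard identity $-1 \equiv u^{(q-1)/2}$ modulo norms.
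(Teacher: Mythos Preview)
Your proposal is correct and follows essentially the same argument as the paper's own proof: both use the unramifiedness of $\E_s/L$ to recast Proposition~\ref{prop:induction:quasi-splitness} as a parity condition on $v_L(\kappa)$, invoke Proposition~\ref{prop:background:Uclass} to identify the discriminant in the quasi-split case as $(-1)^r$, and then rewrite $-1$ as $u^{(q-1)/2}$ modulo norms. Your write-up is simply more explicit about the intermediate steps.
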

\begin{proof}
Since $\E_s/L$ is unramified, $\kappa \in \Nm_{\E_s/L} \E_s^\times $ if and only if $v_L(\kappa) \equiv 0 \pmod{2}$.  By Proposition \ref{prop:background:Uclass}, the discriminant of the quasi-split unitary group of dimension $s$ is congruent to $(-1)^r$ modulo $\Nm_{\E/\K} \E^\times$.  Since $-1$ is a unit, it is a norm from $\E$ if and only if it is a square, which occurs if and only if $q \equiv 1 \pmod{4}$.  
\end{proof}

\subsection*{Embeddings of products}

We first consider even dimensional unitary groups.  By Theorem \ref{thm:ua_tori:elemental_decomposition} we may write $\T \simeq \prod_{i=1}^j \T_{s_i}$.  Let $\us = (s_1, \ldots, s_j) = (2r_1, \ldots, 2r_j)$ be the tuple of dimensions and set $L_i = L_{r_i}$.  

For odd dimensional unitary groups, Theorem \ref{thm:ua_tori:elemental_decomposition} implies that $\T \simeq \prod_{i=1}^{j-1} \T_{s_i} \times \U_1$.  Let $s_j = 1$ and $\us = (s_1, \ldots, s_j) = (2r_1, \ldots, 2r_{j-1}, 1)$ be the tuple of dimensions.  Set $L_i = L_{r_i}$ as above, and $L_j = \K$.  For $\kappa_j \in L_j^\times$ we can define a one-dimensional Hermitian space $V_{1, \kappa_j} \cong E$ by setting $\phi_{\kappa_j}(1,1) = \kappa_j / \pi_\K$.  We will write $\T_1$ for $\U_1$ to simplify notation: $\T_1(\K)$ acts on $V_{1, \kappa_j}$ by multiplication just as the other $T_{s_i}$ act on $V_{s_i, \kappa_i}$.  Note that the Hermitian condition on $\phi_{\kappa_j}$ forces $\kappa_j \in \K^\times$, and thus $v_E(\kappa_j)$ must be even.

In both cases we set $n = \sum_i s_i$.  For every $j$-tuple $\uk = (\kappa_1, \ldots, \kappa_j)$ with $\kappa_i \in L_i^\times$, we get a Hermitian space $V_{\us, \uk} = \prod_{i=1}^j V_{s_i, \kappa_i}$ and a product embedding $\T \hookrightarrow \U(V_{\us, \uk})$.  Write $\phi_{\uk}$ for the Hermitian pairing on $V_{\us, \uk}$, $\G_{\us, \uk}$ for $\U(V_{\us, \uk})$ and $\T_{\us, \uk}$ for the image of $\T$ in $\G_{\us, \uk}$.

\begin{proposition} \label{prop:induction:ukform}
For $n$ odd, $\G_{\us, \uk}$ is always quasi-split.  For $n$ even, $\G_{\us, \uk}$ is quasi-split if and only if 
\[
\sum_{i=1}^j v_L(\kappa_i) \equiv 0 \pmod{2}.
\]
\end{proposition}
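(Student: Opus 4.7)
The plan is to compute the discriminant of $V_{\us,\uk}$ as a product, and then read off quasi-splitness from Proposition \ref{prop:background:Uclass}. The essential inputs are the multiplicativity formula \eqref{eq:background:disc_sum} and the explicit discriminant formula of Corollary \ref{cor:induction:discVskappa}.

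For the odd case, part (2) of Proposition \ref{prop:background:Uclass} states that $\U(\Hyp^m \oplus L) \cong \U(\Hyp^m \oplus L')$ for any Hermitian lines $L$ and $L'$. Since $V_{\us,\uk}$ is odd dimensional, it is isometric (after possibly rescaling the line) to a space of this form, and its unitary group is quasi-split regardless of the values of the $\kappa_i$. Thus the odd case requires no calculation beyond invoking the classification.

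For the even case, I would write $m = n/2 = \sum_{i=1}^j r_i$ and iterate \eqref{eq:background:disc_sum} together with Corollary \ref{cor:induction:discVskappa} to get
\[
\disc(V_{\us,\uk}) \equiv \prod_{i=1}^j u^{v_{L_i}(\kappa_i) + r_i(q-1)/2} \equiv u^{\sum_i v_{L_i}(\kappa_i) + m(q-1)/2} \pmod{\Nm_{E/\K} E^\times}.
\]
By Proposition \ref{prop:background:Uclass}(1), $\G_{\us,\uk}$ is quasi-split (i.e. $V_{\us,\uk} \cong \Hyp^m$) if and only if $dV_{\us,\uk} \equiv (-1)^m$ modulo norms. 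The same identity used to derive Corollary \ref{cor:induction:discVskappa}, namely that $-1 \in \Nm_{E/\K} E^\times$ if and only if $q \equiv 1 \pmod 4$, gives $(-1)^m \equiv u^{m(q-1)/2}$. Dividing, the quasi-split condition becomes $u^{\sum_i v_{L_i}(\kappa_i)} \equiv 1$, which (since $u$ is a non-norm) is equivalent to $\sum_i v_{L_i}(\kappa_i) \equiv 0 \pmod{2}$.

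The main obstacle is really only the careful arithmetic of reconciling the sign $(-1)^m$ with the non-norm $u$; once the identity $(-1) \equiv u^{(q-1)/2} \pmod{\Nm_{E/\K} E^\times}$ is in hand, everything else is bookkeeping. One minor subtlety to double-check is the case where the odd factor contributes: the one-dimensional piece $V_{1,\kappa_j}$ has discriminant $\kappa_j/\pi_\K$, but since the odd case is settled purely by Proposition \ref{prop:background:Uclass}(2) this value does not need to enter the computation.
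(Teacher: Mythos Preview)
Your proof is correct and follows essentially the same route as the paper's: both dispatch the odd case by invoking the fact that all odd unitary groups are quasi-split, and handle the even case by computing $\disc(V_{\us,\uk})$ via multiplicativity and Corollary~\ref{cor:induction:discVskappa}, then comparing with the discriminant $(-1)^m \equiv u^{m(q-1)/2}$ of $\Hyp^m$. Your write-up is slightly more explicit in spelling out the arithmetic of the exponent, but the argument is the same.
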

\begin{proof}
The odd case is immediate since all odd unitary groups are quasi-split, so we assume that $n$ is even.  The discriminant of $\Hyp$ is $-1$, which is a norm from $E$ if and only if $q \equiv 1 \pmod{4}$.  Therefore the discriminant of a quasi-split space of dimension $2m$ will be $u^{(q-1)m}$.  The result now follows from Corollary \ref{cor:induction:discVskappa} and Proposition \ref{prop:disc_class}.
\end{proof}

\section{Bruhat-Tits buildings of unitary groups} \label{sec:buildings}

Suppose $\G$ is a reductive group over $\K$ with anisotropic center.  The Bruhat-Tits building $\BB(\G / \K)$ provides a tool for classifying models of $\G$ over $\OK$ and compact subgroups of $\G(\K)$.  Various structures on $\BB(\G)$ play a role in this classification:
\begin{itemize}
\item $\BB(\G)$ is a complete metric space and a simplicial complex.
\item $\G(\K)$ acts on $\BB(\G)$ by simplicial isometries.
\item $\BB(\G)$ is the union of a collection of distinguished subsets, known as \emph{apartments}, indexed by the maximal $\K$-split tori in $\G$.  The apartment $\Ap(\Aa)$ associated to $\Aa$ is an affine space for the real vector space $X_*(\Aa) \otimes \RR$.  When $\G$ is quasi-split, $\Aa$ is determined by its centralizer $\Ss = \Z_\G(\Aa)$ and we will also write $\Ap(\Ss)$ for $\Ap(\Aa)$.
\end{itemize}
A \emph{facet} is either a vertex or the interior of a positive-dimensional simplex, and an \emph{alcove} is a facet of maximal dimension.  For more details on Bruhat-Tits buildings, see Tits' introduction \cite{tits:79a}, the original articles by Bruhat and Tits \cites{bruhat-tits:72a, bruhat-tits:84a, bruhat-tits:84b, bruhat-tits:87a, bruhat-tits:87b}, Yu's survey article \cite{yu:09b}, or Garret's book \cite{garrett:BuildingsAndClassicalGroups} for buildings of split classical groups.

To each facet $F$ we may attach three subgroups of $\G(\K)$.  We write $\fix{F}$ for the subgroup that fixes every point of $F$ and $\stab{F}$ for the subgroup that stabilizes $F$.  These groups may be interpreted as the $\OK$-points of models $\GG_F$ and $\GG_F^\flat$ of $\G$.  These models have the same identity component $\GG_F^\circ$.  The parahoric subgroup $\parah{F} = \GG_F^\circ(\OK)$ is the third subgroup of $\G(\K)$ associated to $F$.  The Moy-Prasad filtration and the filtrations on the root spaces $\Uu_\alpha(\K)$ yield a filtration $\{\G(\K)_x^r\}_{r \ge 0}$ on the parahoric $\parah{F}$ \citelist{\cite{bruhat-tits:72a} \cite{bruhat-tits:84a} \cite{yu:03a}*{Thm. 8.3}}.  The depth-zero representations appearing in this paper arise via inflation along $\parah{x} \to \parah{x} / \G(\K)_x^{0+}$, so we need to understand these subgroups and their quotients.

We can give a concrete description of buildings of unitary groups in terms of lattices using the following theorem.
\begin{theorem}[{c.f. \cite{prasad-yu:02a}*{Thm. 1.9}}]
Suppose $\HHh$ is a connected, reductive group over a non-archimedian local field $\K$ and $\Omega$ is a finite group of $\K$-automorphisms of $\HHh$ whose order is not divisible by $p$.  Then $\G = (\HHh^\Omega)^\circ$ is reductive and $\BB(\G)$ can be identified with $\BB(\HHh)^\Omega$.
\end{theorem}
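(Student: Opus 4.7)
The plan is to prove the two assertions in order. For reductivity of $\G = (\HHh^\Omega)^\circ$, I would invoke Steinberg's theorem on fixed-point subgroups: because $\lvert \Omega \rvert$ is prime to $p$, the group $\Omega$ is linearly reductive, the scheme-theoretic fixed subgroup $\HHh^\Omega$ is smooth, and its identity component is reductive. This gives the first assertion directly.

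For the building identification, I would construct a $\G(\K)$-equivariant map $\iota \colon \BB(\G) \to \BB(\HHh)$ whose image is $\BB(\HHh)^\Omega$, and then check surjectivity. To define $\iota$, choose a maximal $\K$-split torus $\Aa$ of $\G$; averaging over $\Omega$ (possible since $\lvert \Omega \rvert$ is invertible in $\K$) enlarges $\Aa$ to an $\Omega$-stable maximal $\K$-split torus $\Aa'$ of $\HHh$ whose $\Omega$-fixed identity component recovers $\Aa$. The inclusion of cocharacter lattices $X_*(\Aa) \hookrightarrow X_*(\Aa')$ gives an affine embedding of apartments $\Ap(\Aa) \hookrightarrow \Ap(\Aa')$ landing in the $\Omega$-fixed points, and one extends $\iota$ to all of $\BB(\G)$ using $\G(\K)$-equivariance together with the fact that any two apartments of $\BB(\G)$ are $\G(\K)$-conjugate. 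Well-definedness, injectivity, and isometry of $\iota$ reduce to compatibility of the affine root systems of $\G$ and $\HHh$: the roots of $\G$ relative to $\Aa$ are restrictions of $\Omega$-orbit sums of roots of $\HHh$ relative to $\Aa'$, and the same relation holds for the associated Moy-Prasad filtrations on root groups, since $\lvert \Omega \rvert$ is prime to $p$.

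For surjectivity onto $\BB(\HHh)^\Omega$, pick $x \in \BB(\HHh)^\Omega$. The Bruhat-Tits fixed-point theorem ensures $\BB(\HHh)^\Omega$ is a nonempty convex subset of the CAT(0) space $\BB(\HHh)$. The key claim is that through $x$ one may find an $\Omega$-stable apartment $\Ap(\Aa')$ of $\BB(\HHh)$ such that $(\Aa')^{\Omega,\circ}$ is a maximal $\K$-split torus of $\G$; then the $\Omega$-fixed locus of $\Ap(\Aa')$ is exactly $\Ap((\Aa')^{\Omega,\circ})$ and contains $x$, placing $x$ in the image of $\iota$. The main obstacle is precisely this existence statement: stabilizers of $x$ in $\HHh(\K)$ are $\Omega$-stable, but producing an $\Omega$-stable maximal bounded subgroup, and then an $\Omega$-stable maximal $\K$-split torus meeting the stabilizer in a particular way, is delicate. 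I would handle it by passing to the maximal unramified extension $\Ku$, where the split case admits a transparent combinatorial description of apartments through $x$ (one can average the vertices of any apartment containing $x$ over $\Omega$), and then descending to $\K$ via Galois invariance of the constructions.
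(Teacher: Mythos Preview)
The paper does not prove this theorem: it is quoted verbatim from Prasad--Yu \cite{prasad-yu:02a}*{Thm.~1.9} and then immediately applied to $\HHh = \Res_{E/\K}\GL(V)$. There is no argument in the paper to compare your proposal against.

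That said, your sketch has genuine gaps if you intend it as an independent proof. The phrase ``averaging over $\Omega$ enlarges $\Aa$ to an $\Omega$-stable maximal $\K$-split torus $\Aa'$'' does not describe an actual construction: one cannot average subtori, and there is no reason an $\Omega$-stable maximal $\K$-split torus of $\HHh$ should contain a given maximal $\K$-split torus of $\G$ without further argument. Likewise, in the surjectivity step, ``averaging the vertices of any apartment containing $x$ over $\Omega$'' is not a meaningful operation in a building: $\Omega$-translates of a vertex need not lie in a common apartment, and even if they do, their barycenter is typically not a vertex and does not determine an apartment. The existence of an $\Omega$-stable apartment through an arbitrary $\Omega$-fixed point is precisely the heart of Prasad--Yu's argument, and it requires substantial work with schematic parahoric group schemes and \'etale descent rather than a combinatorial averaging. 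Your reduction to the split case over $\Ku$ followed by Galois descent is in spirit what they do, but the actual mechanism is more delicate than what you have written.
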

\noindent In particular, if $\HHh = \Res_{E/\K} \HHh'$ for some $\HHh'$ defined over $E$, then $\Omega = \GalEK$ acts on $\BB(\HHh / \K) = \BB(\HHh' / E)$.  Note that $p$ does not divide $\lvert \Omega \rvert$ precisely when $E / \K$ is tamely ramified.  Applying this theorem to the case that $\HHh'$ is $\U(V) / E \cong\GL_n / E$ we can realize $\BB(\U(V) / K)$ as the fixed points of $\BB(\HHh)$ under the involution induced by $\GalEK$.

\subsection*{\texorpdfstring{The building of $\GL_n$}{The building of GL(n)}}

For a vector space $V$ over $E$, we seek a concrete description of the building of $\GL(V)$ in order to describe $\BB(\U(V))$ as a subset.  The points of $\BB(\GL(V)/E)$ will be equivalence classes of norms on $V$, where we consider two norms equivalent if they differ by a constant \cite{yu:09b}*{\S 2.1.3}.

Norms are closely related to lattices.  A basis $\BBB = \{v_1, \ldots, v_n\}$ of $V$ is a \emph{splitting basis} for $\alpha$ if there exist $c_1, \ldots, c_n \in \RR$ so that
$\alpha\left(\sum \lambda_i v_i\right) = \min \left(v_E(\lambda_i) + c_i\right)$.  An apartment of $\BB(\GL(V))$ consists of those norm classes with a common splitting basis $\BBB$; the corresponding torus consists of the elements of $\GL(V)$ stabilizing the lines spanned by the vectors in $\BBB$.  The \emph{hyperspecial norm} associated to a basis $\BBB$ is the norm $\alpha\left(\sum \lambda_i v_i\right) = \min v_E(\lambda_i).$  To any hyperspecial norm $\alpha$ we associate the lattice $L_\alpha = \OE \langle v_1, \ldots, v_n \rangle$.
The equivalence relation on norms translates to one on lattices: $L$ and $L'$ are equivalent if $L' = \pi_E^c L$ for some $c \in \ZZ$.  The hyperspecial points in $\BB(\GL(V))$ are the vertices in the simplicial decomposition of $\BB(\GL(V))$ and correspond to equivalence classes of lattices.

A set of $k+1$ vertices form a simplex if there are lattices $L_0, \ldots, L_k$ representing the corresponding lattice classes such that
\[
L_0 \supsetneq L_1 \supsetneq \cdots \supsetneq L_k \supsetneq \pi_\K L_0.
\]

\subsection*{\texorpdfstring{The building of $\U_n$}{The building of U(n)}}

We now return to the analysis of the building of $\G = \U(V)$ in terms of an action of $\GalEK$ on the building of $\HHh = \Res_{E/\K} \GL(V)$.  Suppose that $\Aa$ is a maximal $\K$-split torus in $\U(V)$, contained in a maximal torus $\Ss$ that is defined over $\K$.  Since $\Ss$ is defined over $\K$, the apartment $\Ap(\Ss / E)$ is $\GalEK$-stable, and we can identify the $\GalEK$-fixed points with the apartment $\Ap(\Aa / \K)$.  If $\G$ is quasi-split, then each apartment of $\BB(\G)$ will be contained in a unique apartment of $\BB(\HHh)$; if $\G$ is not quasi-split then the dimension of the apartments of $\BB(\G)$ will be one less, and each apartment will be contained in many apartments of $\BB(\HHh)$.

Since $\GalEK$ acts on $\BB(\HHh)$ as a simplicial involution, there will be two types of simplices that intersect $\BB(\HHh)^{\GalEK}$:
\begin{itemize}
\item Simplices of $\BB(\HHh)$ that are fixed by $\GalEK$, corresponding to simplices of $\BB(\G)$ of the same dimension.  The Hermitian form $\phi$ on $V$ gives an identification of $V$ with its dual, and the dual of a lattice $\Lambda$ will be the lattice
\[
\Lambda^\vee = \{v \in V \st \phi(v, \lambda) \in \OE\}.
\]
The vertices of $\BB(\U(V))$ of this type correspond to lattices satisfying $\Lambda = \Lambda^\vee$.
\item Simplices of $\BB(\HHh)$ that are stabilized by $\GalEK$ but not fixed, each containing a simplex of $\BB(\G)$ of one lower dimension fixed by $\GalEK$.  Vertices of $\BB(\G)$ of this type arise from 1-simplices in $\BB(\HHh)$ whose ends are exchanged by the nontrivial element of $\GalEK$.  Such edges correspond to pairs of lattices $\Lambda_0$, $\Lambda_1$ with
\begin{align*}
\Lambda_0 &\supsetneq \Lambda_1 \supsetneq \pi_E \Lambda_0, \mbox{ or}\\
\Lambda_0^\vee &= \Lambda_1.
\end{align*}
\end{itemize}
Merging these two types, we see that vertices in the simplicial decomposition of $\BB(\U(V))$ correspond to lattice classes with a representative $
\Lambda$ satisfying
\[
\Lambda \supseteq \Lambda^\vee \supsetneq \pi_E \Lambda.
\]

\subsection*{Anisotropic tori}

In Section \ref{sec:embedding_tori}, we parameterized embeddings of the tori $\T_{\us, \uk}$ into unitary groups $\G_{\us, \uk}$.  Since $\T_{\us, \uk}(\K)$ is compact, it is contained in at least one maximal compact subgroup of $\G_{\us, \uk}$.  In fact, we may use its action on the building $\BB(\G_{\us, \uk})$ to see that $\T_{\us, \uk}(\K)$ is contained in a unique maximal compact subgroup.

\begin{theorem} \label{thm:induction:unique_T_fix_vertex}
The action of the torus $\T_{\us, \uk}(\K)$ fixes a unique vertex $x$ in $\BB(\G_{\us, \uk})$.
\end{theorem}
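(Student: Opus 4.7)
The proof naturally splits into existence and uniqueness. For existence, I would use the orthogonal decomposition $V_{\us, \uk} = \bigoplus_i V_{s_i, \kappa_i}$, together with the factoring $\T_{\us, \uk} = \prod_i \T_{s_i}$ (plus a $\U_1$ factor when $n$ is odd) from Theorem \ref{thm:ua_tori:elemental_decomposition}. Each $\T_{s_i}$ acts only on the $i$-th summand by multiplication, and the Levi subgroup $\prod_i \U(V_{s_i, \kappa_i}) \subset \G_{\us,\uk}$ has building $\prod_i \BB(\U(V_{s_i, \kappa_i}))$ embedded in $\BB(\G_{\us,\uk})$. So it suffices to construct a $\T_{s_i}(K)$-fixed vertex in each factor. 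Since $\T_{s_i}(K) \subset \mathcal{O}_{E_{s_i}}^\times$ and $E_{s_i}/E$ is unramified (so $\mathfrak{D}_{E_{s_i}/E} = \mathcal{O}_{E_{s_i}}$), the trace formula $\phi_{\kappa_i}(x,y) = \Tr_{E_{s_i}/E}((\kappa_i/\pi_L)\cdot x \eta(y))$ gives $\Lambda^\vee = (\pi_L/\kappa_i)\pi_E^{-a}\mathcal{O}_{E_{s_i}}$ for $\Lambda = \pi_E^a \mathcal{O}_{E_{s_i}}$. A direct computation of valuations then identifies the unique integer $a_i$ (or a modified lattice, when $v_L(\kappa_i)$ is even and $\U(V_{s_i,\kappa_i})$ is quasi-split) for which the vertex condition $\Lambda_i \supseteq \Lambda_i^\vee \supsetneq \pi_E \Lambda_i$ holds, and $\T_{s_i}(K)$-stability is automatic. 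The orthogonal sum $\bigoplus_i \Lambda_i$ is a $\T_{\us,\uk}(K)$-fixed vertex.

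For uniqueness, $\T = \T_{\us,\uk}$ is a maximal $K$-torus of $\G = \G_{\us,\uk}$: its dimension $\sum_i s_i = n$ matches the rank of $\U_n$. Theorem \ref{thm:ua_tori:elemental_decomposition} guarantees $\T$ is anisotropic over $\K$. By the Bruhat-Tits theory of tori, the fixed point set of $\T(\K)$ in $\BB(\G)$ equals the apartment $\Ap(\T)$, which as an affine space is modeled on $X_*(\T)^{\Gal_\K} \otimes_{\ZZ} \RR$; anisotropy forces this space to be trivial, so the fixed set is a single point. Combined with existence, this point must be the vertex $x$ constructed above.

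The main obstacle lies in step two: the statement that the fixed locus of a maximal anisotropic torus collapses to a single point requires a careful appeal to Bruhat-Tits theory for non-split tori. A concrete alternative, which I would pursue if the black-box invocation feels unsatisfying, is to proceed by contradiction. Suppose $\T(\K)$ fixes two distinct vertices; by the CAT(0) property of $\BB(\G)$ the entire geodesic segment between them is fixed, forcing $\T(\K)$ into the parahoric of a positive-dimensional facet. By the connectedness of the N\'eron model $\TT$ (the Corollary after Proposition \ref{prop:ua_tori:Ts_components}, at least in the even case), the image of $\T(\K)^0 = \TT(\mathcal{O}_\K)$ in the reductive quotient at any parahoric containing it is a torus of rank $n$; but the reductive quotient at a non-vertex parahoric has rank strictly less than $n$, a contradiction. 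The odd-$n$ case, where $\TT$ has component group $\ZZ/2\ZZ$ coming from the $\U_1$ factor, requires a small additional argument using that the $\U_1$ center acts trivially on the building.
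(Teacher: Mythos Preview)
Your existence argument is correct and matches the paper's: one takes the product lattice $\Lambda = \prod_i \pi_E^{b_i}\OO_{E_{s_i}}$, computes $\Lambda^\vee$ using that each $E_{s_i}/E$ is unramified, and solves for the unique $\ub$ with $\Lambda \supseteq \Lambda^\vee \supsetneq \pi_E\Lambda$.

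Both of your uniqueness arguments, however, have genuine gaps. In the first, an anisotropic torus has no apartment in $\BB(\G,K)$; apartments are indexed by maximal $K$-\emph{split} tori. What you presumably intend is the embedded building $\BB(\T,K)\hookrightarrow\BB(\G,K)$, which is indeed a point, but the assertion that the $\T(K)$-fixed locus \emph{equals} this point is precisely the content of the theorem together with Corollary~\ref{cor:induction:unique_fixed_point_on_building}; it is not a citable black box. In your alternative argument, the rank claim is false on two counts. First, the reductive quotients $\Gbar_F$ at all facets $F$ have the \emph{same} rank (the $\Ku$-rank $m=\lfloor n/2\rfloor$ of $\G$); moving from a vertex to an adjacent edge replaces $\Gbar_v$ by a Levi of the same rank, so nothing drops. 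Second, the image of $\T(K)^0$ in $\Gbar_v$ is not a torus of rank~$n$: the special fiber of the connected N\'eron model of $\T$ has dimension $n$ but is not a torus (already $\U_1(E/K)$ for ramified $E/K$ has additive identity component in its special fiber), and after passing to the reductive quotient of $\GG_v$ the image lands in a group of rank~$m$. A salvageable variant would argue that $\Tbar\subset\Gbar_x^\circ$ is \emph{elliptic} and hence cannot lie in a proper parabolic, but establishing ellipticity amounts to the same concrete analysis you are trying to avoid.

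The paper's uniqueness proof takes a completely different, lattice-theoretic route: it shows that every $\T_{\us,\uk}(K)$-stable $\OO_E$-lattice in $V_{\us,\uk}$ is already one of the $\Lambda_{\us,\ub}$. The two ingredients are (i) the elements $(-1,\ldots,-1,1,-1,\ldots,-1)\in\T_{\us,\uk}(K)$, which force any stable lattice to split as a product $\prod_i\Lambda_i$ with $\Lambda_i\subset E_{s_i}$; and (ii) the lemma that $\mu_{q^r+1}(E_s)\subset\T_s(K)$ and that these roots of unity generate $\OO_{E_s}$ as an $\OO_E$-module (their reductions give a basis of the residue extension), which forces each $\Lambda_i$ to be $\pi_E^{b_i}\OO_{E_{s_i}}$. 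This bypasses all of the building-theoretic subtleties above.
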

\begin{proof}
For the purpose of reducing subscripts, write $\E_i$ for $\E_{s_i}$, $L_i$ for $L_{r_i}$ and $\OO_i$ for the ring of integers of $\E_i$ for the duration of this proof.  For each tuple $\ub = (b_1, \ldots, b_j)$ of integers, we define a lattice 
\[
\Lambda_{\us, \ub} = \prod_{i=1}^j \pi_\E^{b_i} \OO_i \subset V_{\us, \uk}.
\]
Since $\Nm_{\E_i/L_i} \alpha = 1$ implies $\alpha \in \OO_i^\times$, the action of $\T_{\us, \uk}$ on $V_{\us, \uk}$ preserves $\Lambda_{\us, \ub}$.  Each extension $\E_i / \E$ is unramified and thus has trivial different, so the dual of $\OO_i$ under the trace pairing is just $\OO_i$, and the dual of $\pi_\E^{b_i}\OO_i$ under $\phi_{\kappa_i}$ is $\pi_\E^{-b_i-v_L(\kappa_i)} \OO_i$.  Therefore, if we write $v_L(\uk)$ for $(v_L(\kappa_1), \ldots, v_L(\kappa_j))$, 
\[
\Lambda_{\us, \ub}^\vee = \Lambda_{\us, -v_L(\uk) - \ub}.
\]
In order for 
\[
\Lambda_{\us, \ub} \supseteq \Lambda_{\us, \ub}^\vee \supseteq \pi_\E \Lambda_{\us, \ub},
\]
every entry of $-v_L(\uk) - 2\ub$ must be either $0$ or $1$.  There is a unique such $\ub$ for each $\uk$, and for this choice of $\ub$, the corresponding vertex of $\BB(\G_{\us, \uk})$ will be fixed by $\T_{\us, \uk}(\K)$.  In order to check that $\T_{\us, \uk}(\K)$ fixes a unique vertex, it suffices to check that any lattice fixed by $\T_{\us, \uk}(\K)$ must be one of the $\Lambda_{\us, \ub}$.

Suppose that $\Lambda$ is an $\OO_\E$-lattice in $V_{\us, \uk}$ fixed by $\T_{\us, \uk}(\K)$.  For each $i$ between $1$ and $j$, let $\lambda_i = (\lambda_{i,1}, \ldots, \lambda_{i,j}) \in \Lambda$ be any element with $v_{E_i}(\lambda_{i,i})$ minimal among the valuations of $i^\mathrm{th}$ coordinates of elements of $\Lambda$; let $b_i$ be this minimal valuation.  We now show that $\Lambda = \Lambda_{\us, \ub}$.

First we reduce to working one coordinate at a time.  Since $\Nm_{E_i/L_i}(-1) = 1$ for every $i$, we have an element 
\[
\alpha_i = (-1, -1, \ldots, -1, 1, -1, \ldots, -1) \in \T_{\us, \uk}(\K),
\]
where the $1$ occurs in position $i$.  Therefore we may replace $\lambda_i$ by 
\[
\lambda_i/2 + \alpha_i \lambda_i/2 = (0, \ldots, 0, \lambda_{i,i}, 0, \ldots, 0) \in \Lambda,
\]
which also has minimal valuation in the $i^\mathrm{th}$ coordinate.  

Write $\OE \cdot \T_s(\K)$ for the $\OE$-submodule of $\OO_{\E_s}$ generated by $\T_s(\K)$.  By our definition of the $b_i$, we have $\Lambda \subseteq \Lambda_{\us, \ub}$.  To show the reverse containment, it suffices to show that $\OE \cdot \T_s(\K) = \OO_{\E_s}$.  When $s=1$, this equation clearly holds.  For $s=2r$, we have the following lemma.

\begin{lemma} For any field $M$, let $\mu_n(M)$ denote the group of $n^\mathrm{th}$ roots of unity in $M$.  
\begin{enumerate}
\item $\mu_{q^r+1}(\E_s) \subset \T_s(\K)$,
\item $\mu_{q^r+1}(\E_s)$ generates $\OO_{\E_s}$ as an $\OO_\E$-module.
\end{enumerate}
\end{lemma}
\begin{proof}
The nontrivial element of $\Gal(\E_s/L)$ is $\Fr^r$, which acts on elements $\alpha \in \mu_{q^r+1}$ by $\alpha \mapsto \alpha^{q^r}$.  Thus $\Nm_{\E_s/L}(\alpha) = \alpha^{q^r+1} = 1$, so $\alpha \in \T_s(\K)$.

Now let $\k_s$ be the degree $s$ extension of $\k$ and $\bar{\alpha}$ be a generator for the cyclic group $\mu_{q^r+1}(\k_s)$.  Since the multiplicative order of $\bar{\alpha}$ is $q^r+1$, $\bar{\alpha}$ is not contained in any subfield of $k_s$, and thus the set $\{1, \bar{\alpha}, \ldots, \bar{\alpha}^{s-1}\}$ is a basis for $\k_s$ over $\k$.  Since $\E_s/\E$ is unramified we can approximate any element of $\OO_{\E_s}$ arbitrarily well with elements of $\OO_\E \cdot \T_s(\K)$.  Completeness of $\OO_{\E_s}$ now finishes the proof.
\end{proof}

Returning to the proof of the theorem, we have shown that $\Lambda = \Lambda_{\us, \ub}$ for an appropriate choice of $\ub$.  Therefore the action of $\T_{\us, \uk}(\K)$ fixes a unique vertex.
\end{proof}

\begin{corollary} \label{cor:induction:unique_fixed_point_on_building}
The torus $\T_{\us, \uk}(\K)$ fixes no other point in $\BB(\G_{\us, \uk})$.
\end{corollary}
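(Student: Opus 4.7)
The plan is to identify the set of points in $\BB(\G_{\us,\uk})$ fixed by $\T_{\us,\uk}(\K)$ with a space of invariant norms on $V_{\us,\uk}$, and to show this space collapses to a single point once one accounts for norm equivalence and Galois fixedness. Identifying $\BB(\G_{\us,\uk})$ with the $\GalEK$-fixed locus in $\BB(\Res_{\E/\K} \GL(V_{\us,\uk}))$, each point of the building corresponds to a $\GalEK$-stable equivalence class of $\E$-norms on $V_{\us,\uk}$, so it suffices to exhibit a unique equivalence class of $\E$-norms that is simultaneously $\GalEK$-fixed and $\T_{\us,\uk}(\K)$-invariant.

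I would first describe all $\T_{\us,\uk}(\K)$-invariant $\E$-norms on $V_{\us,\uk}$. The proof of Theorem~\ref{thm:induction:unique_T_fix_vertex} already establishes, via the lemma $\OE \cdot \T_s(\K) = \OO_{\E_s}$, that every $\T_{\us,\uk}(\K)$-invariant $\OE$-lattice has the form $\Lambda_{\us,\ub}$ for some $\ub \in \ZZ^j$. Since each ball of an invariant norm is itself an invariant lattice, the compatibility $\pi_\E B_r = B_{r+1}$ forces any invariant norm to take the form
\[
\alpha_{\uc}(v) = \min_i \bigl( v_{\E_i}(v_i) - c_i \bigr), \qquad \uc = (c_1, \ldots, c_j) \in \RR^j,
\]
and the equivalence of norms identifies $\uc$ with $\uc + a \mathbf{1}$ for $a \in \RR$, where $\mathbf{1} = (1, \ldots, 1)$.

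Next I would pin down the action of $\GalEK$ on this parameter space. The nontrivial element sends $\alpha$ to its dual $\alpha^\vee$ with respect to $\phi_{\uk}$. The lattice identity $\Lambda_{\us,\ub}^\vee = \Lambda_{\us, -v_L(\uk) - \ub}$ from the theorem's proof translates at the level of norms into $\alpha_{\uc}^\vee = \alpha_{D - \uc}$ for a constant vector $D$ depending on $\uk$, so the induced involution on parameters is affine with linear part $-I$. The Galois-fixed condition $\alpha_{\uc} \sim \alpha_{\uc}^\vee$ then reads $2\uc - D \in \RR \cdot \mathbf{1}$, producing a one-parameter family $\uc = (D + a\mathbf{1})/2$ that collapses to a single class once one quotients by $\uc \sim \uc + b\mathbf{1}$, since that equivalence shifts $a$ by $2b$.

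The main obstacle is the second step, converting the lattice duality formula into the precise affine action on parameters. Only the qualitative fact that the linear part equals $-I$ matters for the uniqueness argument, but extracting it requires careful bookkeeping between the real-valued $c_i$ and the integer-valued ball thresholds $b_i(r)$, together with verifying that the Hermitian pairing induces the involution $\alpha \mapsto \alpha^\vee$ on norms dual to the involution $\Lambda \mapsto \Lambda^\vee$ on lattices. Once this is in hand, uniqueness of the fixed equivalence class is immediate, and by Theorem~\ref{thm:induction:unique_T_fix_vertex} the unique fixed point must coincide with the vertex $x$.
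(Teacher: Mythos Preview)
Your approach is correct but quite different from the paper's. The paper gives a three-line geometric argument: if $\T_{\us,\uk}(\K)$ fixed some $y \ne x$, then by isometry it fixes the geodesic segment $[x,y]$, which meets the interior of a positive-dimensional facet; since the action is simplicial, the whole facet and hence all its boundary vertices are fixed, contradicting Theorem~\ref{thm:induction:unique_T_fix_vertex}. In other words, the paper reduces the corollary to the vertex statement purely via convexity of the fixed-point set and the simplicial nature of the action, with no further computation. Your route instead parameterizes the $\T$-invariant norms directly as an affine space $\RR^j/\RR\cdot\mathbf{1}$ (using the lattice classification from the theorem's proof and the product structure of the balls), computes the Galois involution as an affine map with linear part $-I$, and reads off that the fixed locus is a single class. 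This buys you an explicit description of the entire $\T$-fixed set inside $\BB(\GL(V))$ before passing to $\GalEK$-invariants, at the cost of the bookkeeping you flag in your last paragraph; the paper's argument is shorter and works verbatim for any compact group acting simplicially and isometrically on a building once a unique fixed vertex is known.
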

\begin{proof}
Suppose that $\T_{\us, \uk}(\K)$ fixes an additional point $y \in \BB(\G_{\us, \uk})$, which we may take to lie in a common apartment $\Ap$.  Since $\T_{\us, \uk}(K)$ acts isometrically, it must fix the whole line between $x$ and $y$.  This line will pass through the interior of some facet in $\Ap$ that is not a vertex.  Since $\T_{\us, \uk}(\K)$ acts by simplicial automorphisms, it must fix the whole facet, and thus the vertices in the closure of the facet.  This contradicts Theorem \ref{thm:induction:unique_T_fix_vertex}.
\end{proof}

\begin{corollary} \mbox{}
\begin{enumerate}
\item $\T_{\us, \uk}(\K)$ is contained in a unique maximal compact subgroup $\GG_{\us, \uk}(\OK) \subset \G_{\us, \uk}(\K)$.
\item $\TT_{\us, \uk}^\circ(\OK)$ is contained in a unique maximal parahoric subgroup $\GG_{\us, \uk}^\circ(\OK)$.
\end{enumerate}
\end{corollary}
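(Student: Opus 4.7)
The plan is to combine the Bruhat-Tits fixed-point theorem with the uniqueness results Theorem~\ref{thm:induction:unique_T_fix_vertex} and Corollary~\ref{cor:induction:unique_fixed_point_on_building}, using the standard dictionary that identifies maximal compact subgroups of $\G_{\us, \uk}(\K)$ with vertex stabilizers $\stab{v}$ in $\BB(\G_{\us, \uk})$, and maximal parahoric subgroups with their neutral components $\parah{v}$.

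For part~(1), existence is immediate, since $\stab{x}$ is a maximal compact subgroup containing $\T_{\us, \uk}(\K)$, which I declare to be $\GG_{\us, \uk}(\OK)$. For uniqueness, any maximal compact $K \supseteq \T_{\us, \uk}(\K)$ is itself compact and therefore fixes some point $y \in \BB(\G_{\us, \uk})$ by the Bruhat-Tits fixed-point theorem; since $\T_{\us, \uk}(\K) \subseteq K$ also fixes $y$, Corollary~\ref{cor:induction:unique_fixed_point_on_building} forces $y = x$, so $K \subseteq \stab{x}$ and maximality gives equality. For part~(2), existence follows from the Moy-Prasad identification $\TT_{\us, \uk}^\circ(\OK) = \T_{\us, \uk}(\K)^0$: since $x$ lies in the apartment of $\T_{\us, \uk}$, the depth-zero filtration on $\T_{\us, \uk}$ is induced by the depth-zero filtration on $\G_{\us, \uk}$ at $x$, so $\TT_{\us, \uk}^\circ(\OK) \subseteq \G_{\us, \uk}(\K)_x^0 = \parah{x}$. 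For uniqueness, any maximal parahoric $\parah{v}$ containing $\TT_{\us, \uk}^\circ(\OK)$ must have $v$ fixed by $\TT_{\us, \uk}^\circ(\OK)$, and I would then argue $v = x$ by rerunning the proof of Theorem~\ref{thm:induction:unique_T_fix_vertex} with $\TT_{\us, \uk}^\circ(\OK)$ in place of $\T_{\us, \uk}(\K)$.

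The main obstacle is this last step in the odd case, because the component-group representative $-1$ in the $\U_1$-slot keeps the coordinate-isolating element $\alpha_i$ of the theorem from lying in $\TT_{\us, \uk}^\circ(\OK)$. In the even case the corollary to Proposition~\ref{prop:ua_tori:Ts_components} gives $\TT_{\us, \uk}^\circ(\OK) = \T_{\us, \uk}(\K)$, so everything is automatic. In the odd case I would replace $\alpha_i$ by the modified element carrying $1$ in both position $i$ and the $\U_1$-slot and $-1$ elsewhere, which does lie in $\TT_{\us, \uk}^\circ(\OK)$, and then use an auxiliary application of $\TT_1^\circ(\OK)$-action followed by subtraction to eliminate the residual $\U_1$-coordinate. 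The lattice-generating lemma $\OE \cdot \T_s(\K) = \OO_{\E_s}$ carries over with $\TT_s^\circ(\OK)$ in place of $\T_s(\K)$---trivially when $s = 1$ and identically when $s > 1$---so the rest of the argument goes through and shows that $x$ is the unique vertex fixed by $\TT_{\us, \uk}^\circ(\OK)$, completing the uniqueness in~(2).
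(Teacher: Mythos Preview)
Your argument is correct and follows the paper's approach; the paper's entire proof is the single sentence ``Every maximal compact subgroup fixes a point of $\BB(\G)$, and every maximal parahoric subgroup fixes a vertex.'' You have in fact been more careful than the paper in part~(2): the paper leaves implicit the verification that $\TT_{\us,\uk}^\circ(\OK)$ (and not merely the larger group $\T_{\us,\uk}(\K)$) fixes only the vertex $x$, and your modification of the coordinate-isolating elements $\alpha_i$ in the odd case is a correct way to supply that detail.
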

\begin{proof}
Every maximal compact subgroup fixes a point of $\BB(\G)$, and every maximal parahoric subgroup fixes a vertex.
\end{proof}

At this point we fix $\us$ and $\uk$ in order to simplify the notation.  Note that $\us$ is determined by $\T$, and the choice of $\uk$ is equivalent to a choice of embedding $\T \hookrightarrow \G'$ for some inner form $\G'$ of $\G$.  We set
\begin{align*}
G &= \G_{\us, \uk}(\K),  \\
G^\flat &= \GG_{\us, \uk}^\flat(\OK) = \GG_{\us, \uk}(\OK), \\
G^\circ &= \GG_{\us, \uk}^\circ(\OK),\\
\Gbar^* &= \GG_{\us, \uk}^\flat(\k) = \GG_{\us, \uk}(\k).\\
\end{align*}
Finally, let $\Gbar$ be the maximal reductive quotient of $\Gbar^*$, and let $\Gbar^\circ$ the connected component of the identity of $\Gbar$.

\subsection*{Reductions of parahorics and maximal compacts}

Our construction of representations of $G$ has as intermediate steps the construction of representations of $G^\circ$ and then $G^\flat$.  We need to understand the reductions of $G^\circ$ and $G^\flat$ in order to pass from a representation of the first to a representation of the second.  

We may assume that $\uk$ is sorted so that all of the $\kappa_i$ with odd valuation appear at the beginning and those of even valuation at the end.  If $n$ is odd this convention aligns with our previous choice of putting the $\U_1$ last, since $v_E(\kappa_j)$ will always be even.  Let $d$ be the cutoff so that $\kappa_d$ has odd valuation and $\kappa_{d+1}$ even valuation.  Let $l = \sum_{i=1}^d s_i$ and $m = \sum_{i=d+1}^j s_i$.

The filtration on the parahoric $G^\circ$ induces a filtration on $G^\flat$, and the quotient
\[
\Gbar = \GG_{\us, \uk}(\OK)/\GG_{\us, \uk}(\OK)^{0+}
\]
gives the $\k$-points of a reductive group over $\k$.

\begin{theorem} \label{thm:induction:Ured}
Suppose that $\G = \U_n / \K$ is a unitary group.
\begin{enumerate}
\item The reduction $\Gbar$ is given by 
\[
\Gbar \cong \Sp_l(\k) \times \Orth_m(\k).
\]
\item The connected component of the identity is given by 
\[
\Gbar^\circ \cong \Sp_l(\k) \times \SO_m(\k).
\]
\end{enumerate}
\end{theorem}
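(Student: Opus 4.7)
The plan is to realize the reductive quotient $\Gbar$ as the isometry group of two explicit bilinear forms on successive subquotients of the lattice $\Lambda = \Lambda_{\us,\ub}$ from Theorem~\ref{thm:induction:unique_T_fix_vertex}, and then to identify the two forms as a non-degenerate alternating form of rank $l$ and a non-degenerate symmetric form of rank $m$.

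Because the Hermitian form $\phi_{\uk}$ is preserved by $G$, the stabilizer $G^\flat$ of $\Lambda$ automatically preserves $\Lambda^\vee$ and $\pi_E\Lambda$, and hence acts on the $\k$-vector spaces $V_1 := \Lambda/\Lambda^\vee$ and $V_2 := \Lambda^\vee/\pi_E\Lambda$; standard Bruhat--Tits theory identifies the kernel of this combined action with the pro-$p$ radical $G^{0+}$, so $\Gbar$ embeds in $\GL(V_1)\times\GL(V_2)$. The dual computation inside the proof of Theorem~\ref{thm:induction:unique_T_fix_vertex}, together with the sorting of $\uk$, gives $\Lambda_i^\vee = \pi_E\Lambda_i$ for $i \le d$ and $\Lambda_i^\vee = \Lambda_i$ for $i > d$; since $\E_i/\E$ is unramified of degree $s_i$, each $\Lambda_i/\pi_E\Lambda_i$ has $\k$-dimension $s_i$, so $V_1 \cong \bigoplus_{i \le d}\Lambda_i/\pi_E\Lambda_i$ has dimension $l$ and $V_2 \cong \bigoplus_{i > d}\Lambda_i/\pi_E\Lambda_i$ has dimension $m$.

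Next I would equip $V_1$ and $V_2$ with the forms coming from $\phi_{\uk}$. Fix $\pi_E$ with $\tau(\pi_E) = -\pi_E$, which exists because $E/K$ is tamely ramified and $p$ is odd. On each self-dual summand ($i > d$), $\phi_{\kappa_i}$ is already $\OE$-valued on $\Lambda_i\times\Lambda_i$; since $\tau$ acts trivially on $\k = \OE/\pi_E\OE$, reduction modulo $\pi_E$ yields a symmetric $\k$-bilinear form, and $\Lambda_i^\vee = \Lambda_i$ makes it non-degenerate. On each non-self-dual summand ($i \le d$), the relation $\Lambda_i^\vee = \pi_E\Lambda_i$ shows that $\pi_E\phi_{\kappa_i}$ is $\OE$-valued on $\Lambda_i\times\Lambda_i$; the Hermitian condition forces $\phi(v,v)\in K$, so $\pi_E\phi(v,v)\in \pi_E K\cap\OE \subseteq \pi_E\OE$, whence the reduction is alternating, and the same dual computation gives non-degeneracy. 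Summing produces a non-degenerate alternating form $\psi_1$ on $V_1$ and a non-degenerate symmetric form $\psi_2$ on $V_2$, together with an embedding $\Gbar \hookrightarrow \Sp(V_1,\psi_1)\times\Orth(V_2,\psi_2) \cong \Sp_l(\k)\times\Orth_m(\k)$.

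To finish I would establish surjectivity and read off part~(2). This can be done either by invoking the Bruhat--Tits classification of parahoric subgroups of classical groups recalled at the start of this section, or directly: isometries respecting the orthogonal decompositions of $V_1$ and $V_2$ lift to block-diagonal elements of $\U(V_{\us,\uk})$ stabilizing $\Lambda$, and the remaining ``off-diagonal'' isometries that interchange isomorphic summands lift similarly, and together these generate the full product. Part~(2) is then immediate, because $\Sp_l$ is connected while $\SO_m$ is the identity component of $\Orth_m$. The main obstacle is this last surjectivity step: identifying the two forms is essentially a valuation calculation, but exhibiting enough lifts of elements that do not respect the orthogonal decomposition of $\Lambda$ requires either the full Bruhat--Tits machinery of smooth integral models or a careful block-by-block analysis among the $V_{s_i,\kappa_i}$ of equal rank.
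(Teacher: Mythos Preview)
Your proposal is correct and matches the paper's argument almost exactly. The paper also reduces $\phi_{\uk}$ on $\Lambda^\vee/\pi_E\Lambda$ and $\pi_E\phi_{\uk}$ on $\Lambda/\Lambda^\vee$ to obtain the symmetric and alternating forms; the only cosmetic difference is that it packages this, following Tits, via the nilpotent endomorphism $\nu$ (multiplication by $\pi_E$) on $\Lambda/\pi_K\Lambda$, setting $\bar{\Lambda}_0=\Lambda/\pi_E\Lambda$ and $\bar{\Lambda}_1=\Lambda^\vee/\pi_E\Lambda$ rather than naming the two subquotients $V_1,V_2$ directly. For the surjectivity step the paper does not argue directly but simply appeals to Tits's treatment of this case, which is precisely the first of the two options you list; your explicit verification that $\pi_E\phi(v,v)\in\pi_E\OE$ (hence the reduced form is alternating, not merely skew) is in fact more detailed than what the paper writes.
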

\begin{proof}
Let 
\[
\Lambda = \prod_{i=1}^j\pi_E^{b_i}\OO_i
\]
be the lattice corresponding to the vertex fixed by $\G^\flat$ as in the proof of Theorem \ref{thm:induction:unique_T_fix_vertex}.  By our definitions of $l$ and $m$, the first $d$ entries of $v_E(\uk)+2\ub$ are $-1$ and the last $j-d$ are $0$.  Set
\[
\bar{\Lambda} = \Lambda / \pi_\K \Lambda.
\]
Since $G$ stabilizes the lattice $\Lambda$, we get an action of $G$ on $\bar{\Lambda}$.
Note that $\GG_{\us, \uk}(\OK)^{0+}$ acts trivially on $\bar{\Lambda}$, and thus we get an action of $\Gbar$ on $\bar{\Lambda}$.

Following Tits \cite{tits:79a}*{\S 3.11}, we consider the endomorphism $\nu$ of $\bar{\Lambda}$ induced by multiplication by $\pi_E$ within $\Lambda$.  The endomorphism $\nu$ is clearly centralized by the action of $\Gbar^*$, and has kernel equal to its image.  Set
\[
\bar{\Lambda}_0 = \bar{\Lambda} / \nu(\bar{\Lambda}) \cong \Lambda / \pi_E \Lambda.
\]
Since $\Gbar^*$ centralizes $\nu$, we get a homomorphism $\Gbar^* \rightarrow \GL(\bar{\Lambda}_0)$ with unipotent kernel.  

The skew Hermitian form $\pi_E \phi_\kappa$ takes integral values on $\Lambda$ since $\Lambda^\vee \supseteq \pi_E \Lambda$, and thus induces an alternating form $\bar{\phi}_0$ on $\bar{\Lambda}_0$.  This form is degenerate, with kernel $\bar{\Lambda}_1 \subset \bar{\Lambda}_0$ equal to the image of $\Lambda^\vee$ in $\bar{\Lambda}_0$.  The dimension of $\bar{\Lambda}_1$ is the sum of the dimensions of the components of $\Lambda$ corresponding to $\kappa_i$ with even valuation, namely $\dim_\k(\bar{\Lambda}_1) = m$.  Our alternating form induces a nondegenerate alternating form on the quotient $\bar{\Lambda}_0 / \bar{\Lambda}_1$, a $\k$-vector space of dimension $l$.

The Hermitian form $\phi_\kappa$ takes integral values on $\Lambda^\vee$ since $\Lambda^\vee \subseteq \Lambda$, and thus induces a symmetric form $\phi_1$ on $\bar{\Lambda}_1$.  The image of $\Gbar^*$ in $\GL(\bar{\Lambda}_0)$ preserves these two forms, and the maximal reductive quotient $\Gbar$ is just the product
\[
\Gbar = \Sp(\phi_0) \times \Orth(\phi_1).
\]

The second half of the theorem now follows easily.
\end{proof}

\begin{corollary}
The size of the component group of $G^\flat$ is given by
\[
\eqnchoice{\lvert G^\flat / G^\circ \rvert}{1}{if $n$ is even and all $\kappa_i$ have odd valuation}{2}{otherwise}
\]
\end{corollary}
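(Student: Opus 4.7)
The plan is to reduce the computation of $G^\flat/G^\circ$ to the component group of the reductive quotient $\Gbar$ already computed in Theorem~\ref{thm:induction:Ured}, and then to translate the resulting combinatorial condition back into the language of the $\kappa_i$.

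For the first step, I would use the smoothness of the $\OK$-model $\GG_{\us,\uk}$, together with Hensel's lemma, to identify $G^\flat/G^\circ$ with the component group of the special fiber $\Gbar^*$. Since the unipotent radical of $\Gbar^*$ is a connected unipotent group over the finite field $\k$, Lang's theorem makes the projection to the maximal reductive quotient surjective on $\k$-points and an isomorphism on $\pi_0$, so this component group coincides with $\Gbar(\k)/\Gbar^\circ(\k)$. Theorem~\ref{thm:induction:Ured} then rewrites it as $\Orth_m(\k)/\SO_m(\k)$.

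Because $p \neq 2$, the determinant map shows that $\Orth_m(\k)/\SO_m(\k)$ is trivial when $m = 0$ and isomorphic to $\ZZ/2\ZZ$ otherwise, so everything reduces to determining when $m = 0$. By definition $m = \sum_{i > d} s_i$, where $d$ is the cutoff between the $\kappa_i$ of odd valuation and those of even valuation, so $m = 0$ is equivalent to every $\kappa_i$ having odd valuation. When $n$ is odd, the final summand is the $\U_1$-piece and the paper's convention places it last because $v_E(\kappa_j)$ is even, hence $\kappa_j$ sits after the cutoff and $m \geq s_j = 1$; thus $m = 0$ forces $n$ to be even, recovering exactly the two cases in the statement.

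The step I expect to be most delicate is the first one: the identification of $G^\flat/G^\circ$ with the component group of the special fiber. This uses standard but non-trivial facts about smooth $\OK$-group schemes and about the vanishing of $H^1$ for connected unipotent groups over a finite field, and one must be careful that the identity components on both sides correspond under the reduction map. Once that reduction is in hand, the remainder of the argument is a short bookkeeping exercise with the sorting convention on $\uk$ introduced before Theorem~\ref{thm:induction:Ured}.
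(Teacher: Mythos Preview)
Your argument is correct and matches the paper, which states this as an immediate corollary of Theorem~\ref{thm:induction:Ured} without giving any explicit proof. Your reduction of $G^\flat/G^\circ$ to $\Gbar/\Gbar^\circ \cong \Orth_m(\k)/\SO_m(\k)$ followed by the bookkeeping on when $m=0$ is exactly the intended reasoning; the step you flag as delicate is standard Bruhat--Tits theory (the parahoric modulo its pro-unipotent radical is the $\k$-points of the connected reductive quotient) that the paper simply takes for granted.
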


In the case that $G^\circ$ sits inside $G^\flat$ with index $2$, we will need to determine whether the induction of a Deligne-Lustig representation remains irreducible after inducing.  To this end, we have the following proposition.

\begin{proposition} \label{prop:induction:connected_center}
The center $\Z(\barG)$ lies within $\barG^\circ$ if and only if $n$ is even.
\end{proposition}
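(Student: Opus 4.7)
The plan is to compute $\Z(\Gbar)$ directly from the explicit structure of $\Gbar$ given in Theorem \ref{thm:induction:Ured} and then to reduce the containment question to a parity count on $m$. Since $\Gbar \cong \Sp_l \times \Orth_m$ and $\Gbar^\circ \cong \Sp_l \times \SO_m$, the center factors as $\Z(\Gbar) = \Z(\Sp_l) \times \Z(\Orth_m) = \{\pm I_l\} \times \{\pm I_m\}$. The symplectic factor is always contained in $\Sp_l$, so $\Z(\Gbar) \subseteq \Gbar^\circ$ if and only if $-I_m \in \SO_m$, i.e., if and only if $\det(-I_m) = (-1)^m = 1$. Thus the proposition is equivalent to showing that $m$ and $n$ have the same parity.

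The main task is therefore to analyze $m = \sum_{i=d+1}^j s_i$. When $n$ is even, Theorem \ref{thm:ua_tori:elemental_decomposition} writes $\T$ purely as a product of $\T_{s_i}$ with every $s_i = 2r_i$ even, so $m$ is automatically a sum of even numbers, regardless of how the cutoff $d$ falls. When $n$ is odd, the last factor of $\T$ is the appended $\U_1$ with $s_j = 1$, while all other $s_i = 2r_i$ remain even. The key observation is that for this factor the Hermitian compatibility condition from Section \ref{sec:embedding_tori} forces $\kappa_j \in \K^\times$, so $v_E(\kappa_j) = 2 v_\K(\kappa_j)$ is automatically even. Hence $\kappa_j$ falls into the even-valuation block, $j > d$, and the $1 = s_j$ is included in the sum defining $m$. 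Therefore $m = 1 + (\text{sum of even } s_i)$ is odd.

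Combining the two steps, $m \equiv n \pmod{2}$, and consequently $\Z(\Gbar) \subseteq \Gbar^\circ$ if and only if $n$ is even. The only nontrivial point in the argument is the observation that the $\U_1$ summand always contributes to the orthogonal factor $\Orth_m$ rather than to the symplectic factor $\Sp_l$; this is forced by the Hermitian condition on $\kappa_j$ and hence does not require any choice. Everything else is a routine computation with centers of classical groups.
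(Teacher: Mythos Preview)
Your proposal is correct and follows essentially the same route as the paper: reduce via Theorem~\ref{thm:induction:Ured} to the orthogonal factor, identify the center there as $\{\pm I_m\}$, and observe that $-I_m \in \SO_m$ if and only if $m$ is even, hence if and only if $n$ is even. The only difference is that you spell out the parity $m \equiv n \pmod 2$ (tracing it to the fact that the $\U_1$ factor has $\kappa_j \in \K^\times$ and hence even $v_E(\kappa_j)$), whereas the paper simply asserts this parity in passing.
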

\begin{proof}
Since $\barG = \Sp_l(\k) \times \Orth_m(\k)$, since $n$ has the same parity as $m$, and since $\Sp_l(\k)$ is connected, it suffices to prove the statement for $\barG = \Orth_n(\k)$.

In order for a diagonal matrix $\alpha$ to be orthogonal, we must have $\alpha^2 = 1$.  For scalar $\alpha$, this condition reduces to $\alpha = \pm 1$.

If $n$ is odd, the $-1$ matrix does not lie in $\SO_n(\k)$ but does lie in the center of $\Orth_n(\k)$.  For $n$ even, $-1 \in \SO_n(\k)$ and thus $\Z(\barG) \subset \barG^\circ$.
\end{proof}

Note that the different reductions line up correctly with the reductions given in Appendix \ref{app:loc_indx}, Figure \ref{fig:background:Ureductions}.  In particular, if $n=2m$ and $G$ is quasi-split, then there must be either no odd $v_{E_i}(\kappa_i)$ or at least two; this explains why there are no reductions of the form $\Orth_2 \times \Sp_{2m-2}$ for the quasi-split $G$.  Conversely, if $G$ is not quasi-split then there must be at least one odd $v_{E_i}(\kappa_i)$, corresponding to the lack of any reduction of the form $\Sp_{2m}$.

In the other direction, Figure \ref{fig:background:Ureductions} gives us information about the orthogonal form $\phi_1$ in the proof of Theorem \ref{thm:induction:Ured}: it will be split if $G$ is quasi-split and non-split otherwise.

\section{Tori from Langlands parameters} \label{sec:ua_tori:construction}

Let $\G = U(V)$ be a quasi-split unitary group and $\varphi : \Weil_K \to {}^L\G$ be a tame, discrete, regular Langlands parameter as in the introduction.  We defined in \S \ref{sec:tori} a quasi-split torus $\Ss \subset \G$, unique up to conjugacy.  In this section we will construct from $\varphi$ a maximal, relatively unramified, anisotropic torus $\T$ that will serve as an ingredient for the representations in the $L$-packet $\Pi_\varphi$.

Choose a topological generator $\ttau$ of the tame inertia group $\Itame$ with image $\tau \in \GalEK$, and define $z \in \hat{\G}$ by
\[
\varphi(\ttau) = z\tau.
\]

\begin{proposition} \label{prop:phiconj}
We may conjugate $\varphi$ by an element of $\hat{\G}$ so that $z \in \hat{\Ss}^\tau$.
\end{proposition}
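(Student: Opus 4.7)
The plan rests on the fact that $\varphi(\ttau) = z\tau$ is a semisimple element of the disconnected group ${}^L\G = \hat{\G} \rtimes \langle \tau \rangle$. First, I will invoke Steinberg's theorem that every semisimple element of a (possibly disconnected) reductive algebraic group normalizes a Borel-torus pair of the identity component, so $(z, \tau)$ normalizes some $(B, \hat{T})$ in $\hat{\G}$. Since Borel-torus pairs form a single $\hat{\G}$-conjugacy class, I may conjugate $\varphi$ by a suitable element of $\hat{\G}$ to arrange $(B, \hat{T}) = (B_0, \hat{\Ss})$, the standard pair relative to which $\tau$ acts on $\hat{\G}$ as a pinned automorphism. (For unitary groups this pinned involution acts as $\chi_i \mapsto -\chi_{n+1-i}$ on $X^*(\hat{\Ss})$, which preserves the positive roots.) Then the $\tau$-stability of $\hat{\Ss}$ combined with $(z, \tau)$ normalizing $\hat{\Ss}$ forces $z \in N_{\hat{\G}}(\hat{\Ss})$, and $\tau(B_0) = B_0$ combined with $(z, \tau)$ normalizing $B_0$ forces $z \in N_{\hat{\G}}(B_0) = B_0$. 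Since the only Weyl-group element lying in a Borel is the identity, $B_0 \cap N_{\hat{\G}}(\hat{\Ss}) = \hat{\Ss}$, and hence $z \in \hat{\Ss}$.

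To upgrade this to $\tau(z) = z$ I will conjugate further by some $s \in \hat{\Ss}$. Such a conjugation keeps $z$ inside $\hat{\Ss}$ and replaces it by $zs\tau(s)^{-1}$, and a short abelian computation shows that the new $z$ is $\tau$-fixed if and only if $(s/\tau(s))^2 = \tau(z)/z$. The element $\tau(z)/z$ lies automatically in the $(-1)$-eigentorus $\hat{\Ss}^{\tau,-} := \{h \in \hat{\Ss} : \tau(h) = h^{-1}\}$ (since applying $\tau$ inverts it), and both squaring on $\hat{\Ss}^{\tau,-}$ and the map $s \mapsto s/\tau(s) : \hat{\Ss} \to \hat{\Ss}^{\tau,-}$ are surjective — the latter factors through the former (write $s = s^+ s^-$ in the $\tau$-eigendecomposition and compute $s/\tau(s) = (s^-)^2$), and squaring on a complex torus is surjective. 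Hence a suitable $s$ exists, and after conjugating $\varphi$ by it we obtain $z \in \hat{\Ss}^\tau$.

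The main technical obstacle is the first step: the passage from ``$(z, \tau)$ semisimple'' to ``$(z, \tau)$ normalizes a Borel-torus pair of $\hat{\G}$'' uses the structure theory of semisimple elements in disconnected reductive groups, and the identification of the resulting pair with $(B_0, \hat{\Ss})$ relies on the $L$-group structure providing $\tau$ as a pinned automorphism. Once that structural setup is in place, the remaining two steps are direct calculations inside $\hat{\Ss}$ exploiting the divisibility of complex tori.
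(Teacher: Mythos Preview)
Your proof is correct. The paper's own argument is a one-line citation: it observes that $\varphi(\ttau)$ has finite order, hence conjugation by it is a semisimple automorphism of $\hat{\G}$, and then invokes \cite{reeder:10a}*{Lem.~3.2}, which asserts precisely that any such automorphism is $\hat{\G}$-conjugate to one given by an element of $\hat{\Ss}^\tau \cdot \tau$. Your route via Steinberg's theorem (a semisimple element of a disconnected reductive group normalizes a Borel--torus pair of the identity component) followed by an adjustment inside $\hat{\Ss}$ is essentially how that lemma is proved, so the two approaches coincide at the level of ideas; yours is simply more self-contained.

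One small point: in your second step you call $\hat{\Ss}^{\tau,-}$ a ``complex torus,'' implicitly assuming it is connected. That happens to hold here (for $\hat{\G}=\GL_n$ with the standard involution one checks that $X^*(\hat{\Ss})/(1+\tau)X^*(\hat{\Ss})$ is torsion-free), but it is not automatic for an arbitrary involution on a torus. A cleaner alternative avoids the issue entirely: once $z\in\hat{\Ss}$, pick $s\in\hat{\Ss}$ with $s^{-2}=z$ (possible since $\hat{\Ss}$ is divisible) and conjugate by $s$; the new $z$ is $sz\tau(s)^{-1}=(s\tau(s))^{-1}$, which is visibly $\tau$-fixed.
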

\begin{proof}
An automorphism of $\hat{\G}$ is said to be semisimple if its action on $\hat{\g}$ is diagonalizable.  Since $\varphi(\ttau)$ has finite order, conjugation by it is a semisimple automorphism.  We now apply \cite{reeder:10a}*{Lem. 3.2}.
\end{proof}

From now on, we assume that $z \in \hat{\Ss}^\tau$.  To construct our unramified anisotropic torus, we want to obtain an elliptic element of $\W^\I$.  The first step in this process is the following lemma:

\begin{lemma} \label{lem:ua_tori:cent_ttau}
Assume that $\varphi$ is regular.  Then the centralizer of $\varphi(\ttau)$ is given by 
\[
\Z_{\hat{\G}}(\varphi(\ttau)) = \hat{\Ss}^\tau.
\]
\end{lemma}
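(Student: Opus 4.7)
The plan is to establish the two inclusions separately. Since $z \in \hat{\Ss}^\tau$ by Proposition \ref{prop:phiconj}, the torus $\hat{\Ss}$ is abelian and contains $z$, which makes the containment $\hat{\Ss}^\tau \subseteq \Z_{\hat{\G}}(\varphi(\ttau))$ a one-line computation: for any $g \in \hat{\Ss}^\tau$, using $\tau(g)=g$ and the commutativity of $\hat{\Ss}$,
\[
\varphi(\ttau)\,g\,\varphi(\ttau)^{-1} \;=\; z\tau(g)z^{-1} \;=\; zgz^{-1} \;=\; g.
\]

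For the reverse inclusion, I would take $g \in \hat{\G}$ with $(z\tau)\,g\,(z\tau)^{-1} = g$, which rewrites as $\tau(g) = z^{-1}gz$. The key observation is that $(z\tau)^2 = z\cdot\tau(z) = z^2$, since $\tau^2 = 1$ in $\Gal(\E/\K)$ and $\tau(z) = z$ from $z \in \hat{\Ss}^\tau$. Hence $g$ centralizes $z^2 \in \hat{\G}$. The regularity hypothesis on $\varphi$—which requires that a generator of $\varphi(\Itame)$ have centralizer in $\hat{\G}$ equal to the maximal torus $\hat{\Ss}$—is what I would use to conclude $\Z_{\hat{\G}}(z^2) = \hat{\Ss}$, and therefore $g \in \hat{\Ss}$. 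Once $g$ lives in the abelian group $\hat{\Ss}$, the relation $\tau(g) = z^{-1}gz$ collapses to $\tau(g) = g$, so $g \in \hat{\Ss}^\tau$ as required.

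The delicate step is the justification that $\Z_{\hat{\G}}(z^2) = \hat{\Ss}$: a priori, squaring a semisimple element can enlarge its centralizer. I would argue that the Zariski closure of the cyclic subgroup $\langle z \rangle$ is a connected subtorus of $\hat{\Ss}$ that contains $z^2$ with the same closure (the closure of $\langle z^2 \rangle$), so the two elements have the same centralizer in $\hat{\G}$; regularity of $\varphi$ identifies this common centralizer with the maximal torus $\hat{\Ss}$. Alternatively one can argue at the level of roots: regularity forces no root of $\hat{\Ss}$ to vanish on $z$, and since we work with tamely ramified $\varphi$ over a field with $p \neq 2$, no root can vanish on $z^2$ either. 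This root-system argument is the main technical hurdle, and is likely where Mark Reeder's simplification mentioned in the acknowledgements enters.
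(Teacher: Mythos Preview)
Your argument has a genuine gap at the ``delicate step.''  Regularity of $\varphi$ is a hypothesis on the element $\varphi(\ttau)=z\tau$, not on $z$ or on $z^2$: it says that $\Z_{\hat{\G}}(z\tau)$ is a torus, and says nothing directly about $\Z_{\hat{\G}}(z)$ or $\Z_{\hat{\G}}(z^2)$.  Your two proposed justifications both fail.  The Zariski-closure argument breaks because $z$ has finite order (so the closure of $\langle z\rangle$ is the finite group $\langle z\rangle$ itself, not a connected torus, and $\langle z^2\rangle$ can be strictly smaller).  The root argument fails because ``no root vanishes on $z$'' is the condition for $z$ to be regular in $\hat{\G}$, which is \emph{not} what regularity of $z\tau$ asserts.

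A concrete failure: take $\hat{\G}=\GL_2$ with $\tau$ the pinned outer involution, so that $\hat{\Ss}^\tau=\{\mathrm{diag}(a,a^{-1})\}$.  One checks directly that $\Z_{\GL_2}(\tau)=\hat{\Ss}^\tau$, so $z=I$ gives a regular $\varphi(\ttau)=\tau$.  But then $z^2=I$ and $\Z_{\hat{\G}}(z^2)=\GL_2$, not $\hat{\Ss}$.  Your deduction ``$g$ centralizes $z^2$, hence $g\in\hat{\Ss}$'' therefore collapses, even though the lemma's conclusion is perfectly true in this example.

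The paper's proof sidesteps this entirely.  Since $z\in\hat{\Ss}$ and $\tau$ normalizes $\hat{\Ss}$, conjugation by $\varphi(\ttau)$ is a semisimple automorphism stabilizing $\hat{\Ss}$; this forces $\hat{\Ss}^\tau$ to be a \emph{maximal torus} in the reductive group $\Z_{\hat{\G}}(\varphi(\ttau))$.  Regularity then says this centralizer is itself a torus, hence equal to any maximal torus it contains, giving $\Z_{\hat{\G}}(\varphi(\ttau))=\hat{\Ss}^\tau$ in one stroke.  No statement about $\Z_{\hat{\G}}(z)$ is needed.
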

\begin{proof}
The group $\Z_{\hat{\G}}(\varphi(\ttau))$ certainly contains $\hat{\Ss}^\tau$.  By Proposition \ref{prop:phiconj}, conjugation by $\varphi(\ttau)$ stabilizes $\hat{\Ss}$ and thus $\hat{\Ss}^\tau$ is a maximal torus in $\Z_{\hat{\G}}(\varphi(\ttau))$.  But our assumption that $\varphi$ is regular implies that $\Z_{\hat{\G}}(\varphi(\ttau))$ is a torus, and we thus obtain the desired result.

Alternatively, one can use a result of Reeder \cite{reeder:10a}*{Prop. 3.8} to equate the Lie algebras $\hat{\g}^{\varphi(\ttau)}$ and $\hat{\s}^\tau$. 
\end{proof}

In order to get an relatively unramified torus, we need an element of $\Hh^1(\GalKuK, \W^\I)$.  We do so by reducing $\varphi(\Fr)$ modulo $\hat{\Ss}$.  As long as $\varphi$ maps $\Fr$ into $\N_{\hat{\G}}(\hat{\Ss})$, we obtain a cocycle in $\Hh^1(\GalKuK, \W^\I)$.

\begin{proposition} \label{prop:ua_tori:Frob_in_N}
After conjugating so that $z \in \hat{\Ss}^\tau$, we have $\varphi(\Fr) \in \N_{\hat{\G}}(\hat{\Ss})$.
\end{proposition}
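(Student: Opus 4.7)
The plan is to exploit the defining relation $\Fr \ttau \Fr^{-1} = \ttau^q$ in the tame quotient of $\Weil_\K$. Applying $\varphi$ yields $\varphi(\Fr) \varphi(\ttau) \varphi(\Fr)^{-1} = \varphi(\ttau)^q$. Since $E/\K$ is totally ramified, tame inertia surjects onto $\Gal(E/\K)$, so after modifying $\Fr$ by an element of tame inertia (which leaves the displayed relation intact, as the modification commutes with $\ttau$) I may assume $\varphi(\Fr) \in \hat{\G}$. Thus the identity above realizes $\varphi(\Fr)$ as an element of $\hat{\G}$ conjugating $\varphi(\ttau)$ to $\varphi(\ttau)^q$.

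Tameness forces $\varphi(\ttau)$ to have finite order coprime to $p$, and since $q$ is a power of $p$ the cyclic subgroups $\langle \varphi(\ttau) \rangle$ and $\langle \varphi(\ttau)^q \rangle$ coincide. Hence their centralizers in $\hat{\G}$ agree, and transporting centralizers by conjugation gives
\[
\varphi(\Fr) \cdot \Z_{\hat{\G}}(\varphi(\ttau)) \cdot \varphi(\Fr)^{-1} = \Z_{\hat{\G}}(\varphi(\ttau)).
\]
By Lemma \ref{lem:ua_tori:cent_ttau} this is exactly the statement that $\varphi(\Fr)$ normalizes $\hat{\Ss}^\tau$.

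To upgrade from $\N_{\hat{\G}}(\hat{\Ss}^\tau)$ to $\N_{\hat{\G}}(\hat{\Ss})$, I would verify $\Z_{\hat{\G}}(\hat{\Ss}^\tau) = \hat{\Ss}$: a root $\alpha = \chi_i - \chi_j$ of $\hat{\Ss}$ in $\hat{\G}$ vanishes on $\hat{\Ss}^\tau$ (after clearing torsion) only when $\tau(\alpha) = -\alpha$, and the explicit action $\tau(\chi_i) = -\chi_{-i}$ from Section \ref{sec:tori} then forces $i = -i$ and $j = -j$, impossible for a root since $i \ne j$. Consequently no nontrivial root vanishes on $\hat{\Ss}^\tau$, the centralizer of $\hat{\Ss}^\tau$ in $\hat{\G}$ is precisely $\hat{\Ss}$, and any element normalizing $\hat{\Ss}^\tau$ automatically normalizes its centralizer $\hat{\Ss}$.

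The main obstacle is this last step: the identity $\Z_{\hat{\G}}(\hat{\Ss}^\tau) = \hat{\Ss}$ is specific to the unitary-group situation via the concrete $\tau$-action on the character lattice and would not generalize verbatim to arbitrary tamely ramified groups. The earlier steps are formal consequences of the tame Weil-group presentation together with tameness and regularity of $\varphi$.
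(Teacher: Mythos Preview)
Your argument is correct and follows the same two-step skeleton as the paper: first show that $\varphi(\Fr)$ normalizes $\hat{\Ss}^\tau$ via the tame relation $\Fr\,\ttau\,\Fr^{-1}=\ttau^q$ together with Lemma~\ref{lem:ua_tori:cent_ttau}, then pass from $\N_{\hat{\G}}(\hat{\Ss}^\tau)$ to $\N_{\hat{\G}}(\hat{\Ss})$ using the equality $\Z_{\hat{\G}}(\hat{\Ss}^\tau)=\hat{\Ss}$. The genuine difference is in how that equality is established. You verify it by an explicit check on the type-$A$ roots $\chi_i-\chi_j$ and the action $\tau(\chi_i)=-\chi_{-i}$, showing no root lies in the $(-1)$-eigenspace of $\tau$; this is perfectly valid for unitary groups (your case split is a little terse---one must also dispose of the possibility $\{-i,-j\}=\{j,i\}$, but that case gives $\tau(\alpha)=\alpha$ rather than $-\alpha$). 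The paper instead exhibits a single regular element of $\hat{\Ss}^\tau$, namely $2\rho^\vee(1+\epsilon)$, using only that $\tau$ stabilizes a Borel and hence fixes the sum of positive coroots. That argument is uniform across all pinned automorphisms of all reductive groups, so your closing remark slightly undersells the situation: it is not the identity $\Z_{\hat{\G}}(\hat{\Ss}^\tau)=\hat{\Ss}$ that fails to generalize, only your root-by-root verification of it. The $\rho^\vee$ trick is worth knowing precisely because it removes the obstacle you flagged.
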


\begin{proof}
We first show that it suffices to find a regular element $z_0 \in \hat{\Ss}^\tau$.  The centralizer of $z_0$ would then would be the unique maximal torus containing $z_0$ \cite{humphreys:95a}*{Prop. 2.3} and would also contain $\hat{\Ss}^\tau$:
\begin{equation} \label{eq:ua_tori:ZSt}
\Z_{\hat{\G}}(\hat{\Ss}^\tau) = \hat{\Ss}.
\end{equation}
The image of $\Fr$ under $\varphi$ must normalize $\hat{\Ss}^\tau$ since $\Fr$ normalizes the powers of $\ttau$.  Now \eqref{eq:ua_tori:ZSt} implies that $\varphi(\Fr) \in \N_{\hat{\G}}(\hat{\Ss})$.

To find $z_0$, let $2\rho^\vee$ be the sum of the positive coroots of $\hat{\Ss}$ in $\hat{\G}$, which is $\tau$-invariant since the corresponding Borel subgroup of $\hat{\G}$ is stable under $\tau$.  We claim that for $\epsilon \ne 0$, $z_0 = \rho^\vee(1+\epsilon)$ is an element of $\hat{\Ss}^\tau$ and a regular semisimple element of $\hat{\G}$.  The first claim follows since $\rho^\vee$ is $\tau$-invariant, and the second since no root of $\hat{\Ss}$ vanishes on $\rho^\vee$.
\end{proof}

Proposition \ref{prop:ua_tori:Frob_in_N} allows us to define an element $\omega \in \W \cong \N_{\hat{\G}}(\hat{\Ss}) / \hat{\Ss}$ by projecting $\varphi(\Fr)$.  Since $\varphi$ and thus $\G$ are tamely ramified, $q$ must be odd.  Therefore the projection $\tau \in \W \rtimes \GalEK$ of $\varphi(\ttau)$ will satisfy $\omega \tau \omega^{-1} = \tau^q = \tau$, and thus $\omega \in \W^\I$.  By Proposition \ref{prop:ua_tori:unram_classifying_set} we get an isomorphism class of relatively unramified tori.  We will denote by $\T$ an abstract torus in this isomorphism class.  Moreover, since we assume that the centralizer of the image of $\varphi$ is finite, Frobenius acts without fixed points on $X_*(\Ss^\tau)$.  Thus $\omega$ is an elliptic element of $\W^\I$ and $\T$ is anisotropic. Tracing through the bijection between $\Hh^1(\GalKuK, \W^\I)$ and stable classes of tori, we can describe the Galois action on $X_*(\T)$.

\begin{construction} \label{con:ua_tori:T_construction}
The construction described in this section produces a maximal, relatively unramified, anisotropic torus $\T$.  The splitting field $M$ of $\T$ is naturally identified with the subgroup of $\W^\I \times \GalEK$ generated by $\omega$ and $\GalEK$.  The character and cocharacter groups $X^*(\T)$ and $X_*(\T)$ are identified with $X^*(\Ss)$ and $X_*(\Ss)$. While the action of $\ttau$ remains the same, Frobenius now acts via $\omega$ rather than trivially.
\end{construction}

We can summarize the action of $\GalK$ on $\hat{\T}$ as follows.  As a complex algebraic group, we identify $\hat{\Ss}$ with $\hat{\T}$.  Let $D_\varphi$ be the subgroup of ${}^L\G$ generated by $\hat{\T} \rtimes \GalEK$ and $\varphi(\Fr)$.  Then there is an exact sequence
\begin{equation}
1 \to \hat{\T} \to D_\varphi \to \GalMK \rightarrow 1
\end{equation}
so that the action of $\GalMK$ on $\hat{\T}$ is given by conjugating by a lift in $D_\varphi$.

\section{\texorpdfstring{Groups of type $L$}{Groups of type L}} \label{sec:character:typeL}

The group $D_\varphi$ is an example of a group of type $L$, a notion generalizing $L$-groups.

\begin{definition}
Suppose $\T$ is a torus with splitting field $M/\K$.  A \emph{group of type $L$} associated to $\T$ is an extension $D$ of the form
\[
1 \rightarrow \hat{\T} \rightarrow D \rightarrow \Gal(M/\K) \rightarrow 1.
\]
Such extensions are classified up to isomorphism by $\Hh^2(\Gal(M/\K), \hat{\T})$.

A \emph{split group of type $L$} is a group of type $L$ together with a chosen section of $D \rightarrow \Gal(M/\K)$ that yields an isomorphism $D \cong \hat{\T} \rtimes \Gal(M/\K)$.
\end{definition}

The notion of a group of type $L$ is similar to that of Vogan's weak extended group for $\G$ \cite{vogan:93a}*{Def. 2.3}, but with a torus $\T$ in place of a more general reductive group $\G$, and with $\Gal(M/\K)$ in place of $\GalK$.

For any group $D$ of type $L$, let $P_\K(D, \T)$ denote the set of equivalence classes of homomorphisms from $\Weil_\K$ to $D$ that yield the standard projection $\Weil_\K \rightarrow \Gal(M/\K)$ when composed with $D \rightarrow \Gal(M/\K)$.  We consider two such homomorphisms equivalent if one can be obtained from the other via conjugation by an element of $\hat{\T}$.  One can consider $P_\K(D, \T)$ to be a generalization of $\Hh^1(\K, \hat{\T})$, since in the case that $D$ is split
\[
P_\K(D, \T) = \Hh^1(\K, \hat{\T}).
\]
Note that there is no natural group structure on $P_\K(D, \T)$ when $D$ is not split.

\subsection*{Restriction}

For any extension $N$ of $\K$, we can consider the extension of scalars $\T_N$ of $\T$ to $N$.  The splitting field of $\T_N$ is just given by $NM$, and we have $\Gal(NM/N) \cong \Gal(M/N \cap M)$.  If we have a group $D$ of type $L$ associated to $\T$, then we can obtain a group $D_N$ associated to $\T_N$ as follows.  The dual group $\widehat{\T_N}$ is just $\hat{\T}$ with the subgroup $\GalN \subset \GalK$ acting, and thus we will denote it as $\hat{\T}$ as well.  We can thus define $D_N \subseteq D$ as the inverse image of $\Gal(M/N \cap M) \subseteq \Gal(M/\K)$.  The canonical isomorphism between $\Gal(NM/N)$ and $\Gal(M/N \cap M)$ then gives us the exact sequence
\begin{equation} \label{groupL-res-seq}
1 \rightarrow \hat{\T} \rightarrow D_N \rightarrow \Gal(NM/N) \rightarrow 1.
\end{equation}

We can now define a restriction map 
\[
\res_{N/\K} \colon P_\K(D, \T) \rightarrow P_N(D_N, \T_N)
\]
by just restricting to $\GalN$.  If $D$ is split then $D_N$ will be split by the restriction of the splitting map $\Gal(M/\K) \rightarrow D$ to $\Gal(M/M \cap N)$.  In this case, $\res_{N/\K}$ is just the normal restriction map of group cohomology from $\Hh^1(\K, \hat{\T}) \rightarrow \Hh^1(N, \hat{\T})$.

\begin{theorem} \label{thm:character:res_ker}
Suppose that $\K_f/\K$ is the maximal unramified subextension of the splitting field $M/\K$ of $\T$.  Then each fiber of 
\[
\res_{\K_f/\K} \colon P_\K(D, \T) \rightarrow P_{\K_f}(D_{\K_f}, \T_{\K_f})
\]
is either empty or a principal homogeneous space for $\Hh^1(\K, \hat{\T}^\I)$.
\end{theorem}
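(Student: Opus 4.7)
The plan is to fix a basepoint $\varphi_1$ in a non-empty fiber of $\res_{\K_f/\K}$ and parameterize the remaining elements by a cohomology class. For any other $\varphi_2$ in the fiber, I would replace it by a $\hat{\T}$-conjugate so that $\varphi_1|_{\Weil_{\K_f}} = \varphi_2|_{\Weil_{\K_f}}$, which is possible by definition of the fiber. Define $c \colon \Weil_\K \to D$ by $c(w) = \varphi_2(w) \varphi_1(w)^{-1}$. Since $\varphi_1$ and $\varphi_2$ have the same projection to $\Gal(M/\K)$, the map $c$ lands in $\hat{\T}$, and a direct computation---using that conjugation on $\hat{\T}$ by elements of $D$ factors through the $\Gal(M/\K)$-action, which is intrinsic to the extension $D$ and independent of any splitting---shows $c$ is a $1$-cocycle.

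Next, I would show $c$ descends to a cocycle $\Gal(\K_f/\K) \to \hat{\T}^\I$. By construction $c|_{\Weil_{\K_f}} = 1$, and $\Weil_{\K_f}$ is normal in $\Weil_\K$ since $\K_f/\K$ is Galois. For $h \in \Weil_{\K_f}$ and $g \in \Weil_\K$, the cocycle identity directly gives $c(hg) = {}^h c(g)$, while the decomposition $hg = g \cdot (g^{-1}hg)$ with $g^{-1}hg \in \Weil_{\K_f}$ yields $c(hg) = c(g)$. Comparing the two forces $c(g) \in \hat{\T}^\I$, and combined with $c(gh) = c(g)$ this shows $c$ factors through $\Weil_\K / \Weil_{\K_f} = \Gal(\K_f/\K)$.

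Finally, I would verify the torsor structure. An element $t \in \hat{\T}$ preserves the normalization $\varphi_2|_{\Weil_{\K_f}} = \varphi_1|_{\Weil_{\K_f}}$ under conjugation if and only if $t$ commutes with $\varphi_1(\Weil_{\K_f})$; since the image of $\Weil_{\K_f}$ in $\Gal(M/\K)$ equals $\Gal(M/\K_f)$, the image of $\I$, this stabilizer is exactly $\hat{\T}^\I$. Conjugation by $t \in \hat{\T}^\I$ modifies $c$ by a coboundary, so distinct fiber classes correspond to distinct elements of $\Hh^1(\Gal(\K_f/\K), \hat{\T}^\I)$. Conversely, given any cocycle $c$, the formula $\varphi_2(w) = c(w) \varphi_1(w)$ defines a homomorphism in the fiber (a routine check using the cocycle identity), proving transitivity. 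Since the $\GalK$-action on $\hat{\T}^\I$ factors through the finite quotient $\Gal(\K_f/\K)$---the action on $\hat{\T}$ factors through $\Gal(M/\K)$ and on the $\I$-fixed points through $\Gal(M/\K)/\Gal(M/\K_f)$---this cohomology identifies canonically with $\Hh^1(\K, \hat{\T}^\I)$. The main subtlety throughout is the non-split case of $D$: although $P_\K(D, \T)$ lacks the natural group structure present when $D$ splits, all cocycle manipulations rely only on the intrinsic $\Gal(M/\K)$-action on $\hat{\T}$, so the argument proceeds uniformly.
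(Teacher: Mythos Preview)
Your argument is correct and follows essentially the same route as the paper's: where you form the twisting cocycle $c(w) = \varphi_2(w)\varphi_1(w)^{-1}$ and show it factors through $\Gal(\K_f/\K)$ with values in $\hat{\T}^\I$, the paper instead specializes to a chosen Frobenius lift $F$ and checks directly that $y = \varphi(F)^{-1}\varphi'(F)$ lies in $\ker(\Nm_{\hat{\T}^\I})$ with ambiguity $(F-1)\hat{\T}^\I$. Both computations arrive at $\Hh^1(\Gal(\K_f/\K), \hat{\T}^\I)$ and make the same final identification with $\Hh^1(\K, \hat{\T}^\I)$.
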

\begin{proof}
For a fixed $\varphi \in P_\K(D, \T)$, let $\varphi_f = \res_{\K_f/\K}(\varphi)$.  We want to describe the set of all $\varphi'$ with $\res_{\K_f/\K}(\varphi') = \varphi_f$.  Since $\K_f/\K$ is unramified, in order to extend $\varphi_f$ to all of $\GalK$, we need only specify the image of some Frobenius element $F \in \GalK$.  By multiplying $F$ by an element of $\I$ if necessary, we may assume that $F^f$ acts trivially on $M$.  Since $F^f \in \GalM$, we must therefore have $\varphi_f(F^f) \in \hat{\T}$.  Whatever value $\varphi'(F)$ takes, it must satisfy
\begin{equation} \label{eq:character:fth_power}
\varphi'(F)^f = \varphi_f(F^f) = \varphi(F)^f.
\end{equation}
Write $x = \varphi(F)$, $x' = \varphi'(F)$ and $y = x^{-1}x'$.  Since $x$ and $x'$ have the same image in $\Gal(M/\K)$, we have $y \in \hat{\T}$.  Moreover, using the conjugation action of Frobenius on inertia, it is straightforward to show that $y$ commutes with $\varphi_f(\alpha)$ for any $\alpha \in \I$ and thus $y \in \hat{\T}^\I$.  If we define $\Nm_{\hat{\T}^\I} \colon \hat{\T}^\I \rightarrow \hat{\T}^\I$ by $t \mapsto \prod_{j=1}^f F^j(t)$, then one may further show that $y \in \ker(\Nm_{\hat{\T}^\I}$ using \eqref{eq:character:fth_power}. 

Conversely, suppose that $y \in \ker(\Nm_{\hat{\T}^\I})$.  Then setting $x' = xy$ and working backward through the same steps we find that $x'$ satisfies all the identities required for the image of $F$, and thus defines an element $\varphi' \in P_\K(D, \T)$ with the same restriction to $P_{\K_f}(D_{\K_f},\T_{\K_f})$.

Since elements of $P_\K(D, \T)$ are only defined up to $\hat{\T}$ conjugacy, different values of $x'$ may yield the same element.  In fact, $x$ and $x'$ will yield the same element of $P_\K(D,\T)$ if and only if $y \in (F-1) \hat{\T}^\I$.  To show this fact, suppose that $y = F(z) z^{-1}$ for some $z \in \hat{\T}^\I$.  Since the image of $\varphi_f$ projects onto $\Gal(M/\K_f)$, each element commutes with $F(z) \in \hat{\T}^\I$ and thus conjugating by $F(z)$ leaves the restriction $\varphi_f$ fixed.  Therefore, $xy$ yields the same element of $P_\K(D, \T)$ as 
\[
F(z)xyF(z)^{-1} = F(z) x z^{-1} = x.
\]

Conversely, suppose $x$ and $xy$ are identified after conjugating by some element $z \in \hat{\T}$.  We have already fixed $\varphi_f$, so $z$ must commute with every element of the image of $\varphi_f$.  Since the image projects surjectively onto $\Gal(M/\K_f)$ we must in fact have $z \in \hat{\T}^\I$.  Finally, $xy = x \cdot F(F^{-1}(z)^{-1}) \cdot F^{-1}(z),$ and thus $y \in (F - 1)\hat{\T}^\I$.

We finish the proof by noting that $\Hh^1(\K, \hat{\T}^\I) \cong \ker(\Nm_{\hat{\T}^\I}) / (F-1)\hat{\T}^\I$.
\end{proof}

In the case that $D$ is split, theorem \ref{thm:character:res_ker} reduces to the inflation-restriction sequence \cite{serre:LocalFields}*{Prop. VII.6.4}:
\[
1 \rightarrow \Hh^1(\Gal(\K_f/\K), \hat{\T}^\I) \xrightarrow{\inf} \Hh^1(\K, \hat{\T}) \xrightarrow{\res_{\K_f/\K}} \Hh^1(\K_f, \hat{\T}).
\]
By the remark at the end of that section, this sequence extends to 
\begin{multline} \label{eq:character:inf_res_full}
1 \longrightarrow \Hh^1(\Gal(\K_f/\K), \hat{\T}^\I) \xlongrightarrow{\inf} \Hh^1(\K, \hat{\T}) \xlongrightarrow{\res} \Hh^1(\K_f, \hat{\T})^{\Gal(\K_f/\K)} \\
 \longrightarrow \Hh^2(\Gal(\K_f/\K), \hat{\T}^\I) \longrightarrow \Hh^2(\K, \hat{\T}).
\end{multline}
We next seek an analogue for the second part of this sequence when $D$ is not split.

\subsection*{\texorpdfstring{Relatively unramified groups of type $L$}{Relatively unramified groups of type L}}

Since $\GalK$ acts only on $\hat{\T}$ and not on all of $D_{\K_f}$, we cannot merely follow Serre \cite{serre:LocalFields}*{Prop. VII.6.3} to define an action of $\Gal(\K_f/\K)$ on $P_{\K_f}(D_{\K_f}, \T_{\K_f})$ in general.  However, if the sequence \eqref{groupL-res-seq} is split then $\Gal(\K_f/\K)$ does act on $P_{\K_f}(D_{\K_f}, \T_{\K_f}) \cong \Hh^1(\K_f, \hat{\T})$.  We may extend this action to a broader class of groups of type $L$.

\begin{definition}
We say that $D$ is \emph{relatively unramified} if $D_N$ is split for some unramified extension $N/\K$.
\end{definition}

\begin{proposition} \label{prop:character:unramified_equiv}
Set $I = \Gal(M/\K_f)$.  The following conditions are equivalent:
\begin{enumerate}
\item $D$ is relatively unramified,
\item there is a function $\iota \colon I \rightarrow D$ splitting the map $D \rightarrow \Gal(M/\K)$,
\item there is an exact sequence
\[
1 \rightarrow \hat{\T} \rtimes I \rightarrow D \rightarrow \Gal(\K_f/\K) \rightarrow 1
\]
compatible with the one defining $D$.
\end{enumerate}
\end{proposition}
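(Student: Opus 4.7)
The plan is to establish a cyclic chain (1) $\Rightarrow$ (2) $\Rightarrow$ (3) $\Rightarrow$ (1). The unifying observation is that $\K_f$ is the maximal unramified subextension of $M/\K$, so any unramified extension $N/\K$ satisfies $N \cap M \subseteq \K_f$, and consequently $I = \Gal(M/\K_f) \subseteq \Gal(M/N \cap M)$.

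For (1) $\Rightarrow$ (2), I would start with a splitting $\Gal(NM/N) \to D_N$ for $N/\K$ unramified. Via the canonical isomorphism $\Gal(NM/N) \cong \Gal(M/N \cap M)$ used in \eqref{groupL-res-seq}, this is the same data as a section of $D \to \Gal(M/\K)$ defined over $\Gal(M/N \cap M)$, and its restriction to the subgroup $I$ supplies the desired $\iota$. For (3) $\Rightarrow$ (1) I would simply take $N = \K_f$: the sequence in (3) identifies the preimage $D_{\K_f}$ of $I$ in $D$ with $\hat{\T} \rtimes I = \hat{\T} \rtimes \Gal(\K_f M/\K_f)$, which is exactly the split form required.

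The substantive step is (2) $\Rightarrow$ (3). Let $D_{\K_f}$ denote the preimage of $I$ in $D$. I would form the semidirect product $\hat{\T} \rtimes I$ using the intrinsic conjugation action of $\Gal(M/\K)$ on $\hat{\T}$ coming from the extension defining $D$, and then use $\iota$ to define a map $\hat{\T} \rtimes I \to D$ by $(t, i) \mapsto t \cdot \iota(i)$. Injectivity is immediate by projecting to $I$, and surjectivity onto $D_{\K_f}$ follows by writing any $x \in D_{\K_f}$ as $(x \iota(i)^{-1}) \cdot \iota(i)$, where $i$ is the image of $x$ in $I$. Substituting the resulting isomorphism $D_{\K_f} \cong \hat{\T} \rtimes I$ into the extension $1 \to D_{\K_f} \to D \to \Gal(\K_f/\K) \to 1$ (obtained by dividing the defining sequence of $D$ by $I$) yields the sequence in (3).

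The only point requiring care is verifying, in (2) $\Rightarrow$ (3), that the abstract semidirect product structure on $\hat{\T} \rtimes I$ matches the multiplication inherited from $D$ via $\iota$. This is automatic: conjugation by any element of $D$ on the normal subgroup $\hat{\T}$ factors through $\Gal(M/\K)$ by the definition of a group of type $L$, so $\iota(i)$ acts on $\hat{\T}$ by the same rule as $i \in \Gal(M/\K)$ itself. I expect no other obstacles — the proof is essentially an unwinding of definitions organized around the maximality of $\K_f$ among unramified subextensions of $M$.
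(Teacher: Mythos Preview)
Your proposal is correct and follows the same cyclic chain $(1)\Rightarrow(2)\Rightarrow(3)\Rightarrow(1)$ as the paper, with the same ideas at each step. You supply more detail than the paper does (the paper's $(ii)\Rightarrow(iii)$ is the single sentence ``identify $I$ with its image under $\iota$ to get a subgroup $\hat{\T}\rtimes I\subset D$ with quotient $\Gal(\K_f/\K)$''), and you are slightly more careful in $(1)\Rightarrow(2)$ about the fact that an arbitrary unramified $N$ need not lie inside $M$, whereas the paper tacitly replaces $N$ by $N\cap M\subseteq\K_f$.
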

\begin{proof} \mbox{}
\begin{itemize}
\item $(i) \Rightarrow (ii)$: If $D$ is relatively unramified, then there is a subfield $N \subset \K_f$ and a homomorphism $\iota' \colon \Gal(M/N) \rightarrow D_N$ splitting the sequence
\[
1 \rightarrow \hat{\T} \rightarrow D_N \rightarrow \Gal(M/N) \rightarrow 1.
\]
The restriction of $\iota'$ to $I$ yields the desired splitting of $D \rightarrow \Gal(M/\K)$.
\item $(ii) \Rightarrow (iii)$: If we identify $I$ with its image under $\iota$, we get a subgroup $\hat{\T} \rtimes I \subset D$.  The quotient is just $\Gal(M/K) / I \cong \Gal(\K_f/\K)$.
\item $(iii) \Rightarrow (i)$: The restriction of $\hat{\T} \rtimes I \rightarrow D$ to $I$ provides a splitting for $N = \K_f$.
\end{itemize}
\end{proof}

We can now return to the exact sequence \eqref{eq:character:inf_res_full}.

\begin{proposition} \label{prop:character:res_im_fixed}
Suppose that $D$ is relatively unramified, and that $\Gal(M/\K_f)$ is abelian.
Then the image of 
\[
 \res_{\K_f/\K} \colon P_\K(D, \T) \rightarrow P_{\K_f}(D_{\K_f}, \T_{\K_f})
\]
 is fixed by $\Gal(\K_f/\K)$.
\end{proposition}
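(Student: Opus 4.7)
The approach is to use $\varphi$ itself to supply a lift of Frobenius in $D$. Given $\varphi \in P_\K(D, \T)$ and $\bar F \in \Gal(\K_f/\K)$, pick any Frobenius lift $\tilde F \in \Weil_\K$ of $\bar F$, so that $\varphi(\tilde F) \in D$ projects to a lift $g_0 \in \Gal(M/\K)$ of $\bar F$. The homomorphism property of $\varphi$ yields the identity
\[
\varphi(\tilde F) \cdot \varphi_f(\tilde F^{-1} w \tilde F) \cdot \varphi(\tilde F)^{-1} = \varphi(w) = \varphi_f(w)
\]
for every $w \in \Weil_{\K_f}$, where $\varphi_f = \res_{\K_f/\K}(\varphi)$. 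Thus $\varphi_f$ is conjugated back to itself by $\varphi(\tilde F) \in D$ in tandem with the conjugation $w \mapsto \tilde F^{-1} w \tilde F$ on $\Weil_{\K_f}$.

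To reinterpret this identity as invariance under the intrinsic $\bar F$-action on $P_{\K_f}(D_{\K_f}, \T_{\K_f})$, use the splitting $\iota \colon I \to D$ provided by Proposition~\ref{prop:character:unramified_equiv} to identify $P_{\K_f}(D_{\K_f}, \T_{\K_f}) \cong \Hh^1(\K_f, \hat{\T})$, writing $\varphi_f(w) = c(w)\,\iota(\pi(w))$ for a $1$-cocycle $c \colon \Weil_{\K_f} \to \hat{\T}$ and $\pi \colon \Weil_{\K_f} \to I$ the natural surjection. Under this identification, the $\bar F$-action on cocycles is $(\bar F \cdot c)(w) = g_0 \cdot c(\tilde F^{-1} w \tilde F)$, where the $g_0$-action on $\hat{\T}$ is the one coming from the group-of-type-$L$ structure on $D$. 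Breaking the displayed identity into its $\hat{\T}$-part and its $\iota(I)$-part yields
\[
(\bar F \cdot c)(w) = c(w) \cdot \beta(\pi(w))^{-1},
\]
where $\beta \colon I \to \hat{\T}$ is the defect defined by $\varphi(\tilde F)\,\iota(g_0^{-1} i g_0)\,\varphi(\tilde F)^{-1} = \beta(i)\,\iota(i)$.

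The proposition therefore reduces to showing that $\beta$ is a $1$-coboundary in $\Hh^1(I, \hat{\T})$, and this is the main obstacle. A direct computation using that $\iota$ is a group homomorphism on the abelian group $I$ confirms $\beta \in \Zcycle^1(I, \hat{\T})$, and changing $\varphi$ within its $\hat{\T}$-conjugacy class modifies $\beta$ by a coboundary, so the class $[\beta] \in \Hh^1(I, \hat{\T})$ depends only on $[\varphi]$. The plan for exhibiting an explicit coboundary is to apply the homomorphism identity to elements of $\Itame \subset \Weil_\K$ mapping onto $I$ to express $\beta(i)$ in terms of $c|_I$, and then use the abelian hypothesis on $I$ (together with the relative unramified structure of $D$) to solve for $s \in \hat{\T}$ with $\beta(i) = i(s)\,s^{-1}$. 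The abelianness of $I$ is crucial: it both makes $\iota$ a homomorphism so the bookkeeping above is valid and lets the $g_0$-twisted term in $\beta$ collapse into a single coboundary. Once $[\beta] = 0$, the identity above gives $[\bar F \cdot \varphi_f] = [\varphi_f]$, which is the desired invariance.
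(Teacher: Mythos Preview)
Your opening identity
\[
\varphi(\tilde F)\,\varphi_f(\tilde F^{-1} w \tilde F)\,\varphi(\tilde F)^{-1}=\varphi_f(w)
\]
is exactly what the paper uses, but the paper reaches the conclusion from it in one stroke where you take a longer route that you do not finish.

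The paper \emph{defines} the $\Gal(\K_f/\K)$-action on $P_{\K_f}(D_{\K_f},\T_{\K_f})$ intrinsically: for $\sigma\in\Gal(\K_f/\K)$, choose any lift $\tilde\sigma\in\Weil_\K$ and any lift $x\in D$ of $\sigma$, and set $(\sigma.\varphi_f)(\epsilon)=x\,\varphi_f(\tilde\sigma^{-1}\epsilon\,\tilde\sigma)\,x^{-1}$. The abelian hypothesis on $I=\Gal(M/\K_f)$ enters only in checking that this is independent of the lifts: two choices of $x$ differ by an element $(t,i)\in\hat\T\rtimes I$, and because $I$ is abelian the resulting ambiguity on $\hat\T\rtimes I$ is absorbed by the $\hat\T$-conjugacy already built into $P_{\K_f}$. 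Once the action is set up this way, taking $x=\varphi(\tilde F)$ turns your displayed identity into the literal statement $\bar F.\varphi_f=\varphi_f$; no defect cocycle ever appears.

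You instead pass through the splitting $\iota$ and work with Serre's action $(\bar F\cdot c)(w)=g_0\cdot c(\tilde F^{-1}w\tilde F)$ on $\Hh^1(\K_f,\hat\T)$. This forces you to measure the failure of conjugation by $\varphi(\tilde F)$ to preserve $\iota(I)$, which is your $\beta$, and the proposition then hinges on $[\beta]=0$ --- a statement you only sketch. In fact $[\beta]=0$ is precisely the claim that the paper's intrinsic action coincides with Serre's under the identification via $\iota$, which the paper records as a one-line remark (``one can check'') rather than as part of the proof. So your unfinished step and the paper's side remark are the same fact; the difference is that the paper has organized matters so that the proposition itself does not depend on it. As written, your argument has a genuine gap here: the plan to ``express $\beta$ via $c|_I$ and solve for $s$'' is not carried out, and it is not evident how the abelian hypothesis alone forces $\beta$ to be a coboundary for a general relatively unramified $D$. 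The cleaner route is the paper's: define the action via lifts to $D$, so that the identity you already have finishes the proof.
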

\begin{proof}
Since $D$ is relatively unramified, $P_{\K_f}(D_{\K_f}, \T_{\K_f}) \cong \Hh^1(\K_f, \hat{\T})$, and we want to use the standard action of $\Gal(\K_f/\K)$ on $\Hh^1(\K_f, \hat{\T})$ to give an action of $\GalK$ on $P_{\K_f}(D_{\K_f}, \T_{\K_f})$.  We would like to define, for $\varphi \in P_{\K_f}(D_{\K_f}, \T_{\K_f})$, $\sigma \in \GalK$ and $\epsilon \in \GalKf$
\[
(\sigma . \varphi)(\epsilon) = \sigma . \varphi(\sigma^{-1} \epsilon \sigma).
\]
Here the action of $\GalK$ on $\hat{\T} \rtimes I$ should come from conjugation within $D$, using the exact sequence from Proposition \ref{prop:character:unramified_equiv}: to determine how $\sigma$ acts we first project it to $\Gal(\K_f/\K)$, then lift it arbitrarily to $D$ and conjugate.  This does not actually yield an action on $\hat{\T} \rtimes I$, since the action would depend on our choice of lift.  Suppose that $x$ and $x'$ are two different lifts, and thus $x' = (t, i)x$ for some $(t, i) \in \hat{\T} \rtimes I$.  Then conjugation by $x$ and by $x'$ differs by conjugation by $(t, i)$.  Since $I$ is assumed to be abelian, a simple computation shows that conjugating by $(t, i)$ is the same as conjugating by $t \in \hat{\T}$.  Thus the ambiguity in the definition of the action of $\Gal(\K_f/\K)$ on $\hat{\T} \rtimes I$ disappears once we note that elements of $P_{\K_f}(D_{\K_f}, \T_{\K_f})$ are defined up to conjugation by an element of $\hat{\T}$.  Similarly, modifying $\sigma$ by an element of $\GalKf$ has the effect of conjugating $\varphi(\sigma^{-1} \epsilon \sigma)$ by an element of $\hat{\T} \rtimes I$, and thus by an element of $\hat{\T}$ by the same reasoning.  So we get a genuine action of $\Gal(\K_f/\K)$ on $P_{\K_f}(D_{\K_f}, \T_{\K_f})$, and one can check that in fact this is the same action as the one on $\Hh^1(\K_f, \hat{\T})$ described in \cite{serre:LocalFields}*{Prop. VII.6.3}.

If $\varphi \in P_{\K_f}(D_{\K_f}, \T_{\K_f})$ is in the image of restriction, write $\tilde{\varphi}$ for a homomorphism on $\GalK$ with restriction $\varphi$.  Then for $\sigma \in \GalK$,
\begin{align*}
(\sigma . \varphi)(\epsilon) &= \sigma . \varphi(\sigma^{-1} \epsilon \sigma) \\
&= \tilde{\varphi}(\sigma) \tilde{\varphi}(\sigma^{-1}) \varphi(\epsilon) \tilde{\varphi}(\sigma) \tilde{\varphi}(\sigma)^{-1}\\
&= \varphi(\epsilon),
\end{align*}
where all equalities are defined up to conjugation by an element of $\hat{\T}$ that depends on $\sigma$ but not $\epsilon$.
\end{proof}

Every group of type $L$ associated to a tame Langlands parameter will be relatively unramified:

\begin{proposition} \label{prop:character:Dphi_unram}
Suppose that $\varphi$ is a tame, discrete, regular Langlands parameter.  Then $D_\varphi$ is relatively unramified.
\end{proposition}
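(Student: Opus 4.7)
The plan is to verify condition \emph{(ii)} of Proposition~\ref{prop:character:unramified_equiv} by exhibiting a homomorphic section $\iota \colon I \to D_\varphi$ of the projection $D_\varphi \to \Gal(M/\K)$, where $I = \Gal(M/\K_f)$ is the inertia subgroup.

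The starting point is to identify $I$ explicitly. Since $\varphi$ is tame, its restriction to $\I \subset \GalK$ factors through the tame quotient, and Construction~\ref{con:ua_tori:T_construction} shows that $\ttau$ acts on $X^*(\T) = X^*(\Ss)$ as $\tau$ while $\Fr$ acts as $\omega$. Under the identification of $\Gal(M/\K)$ with the subgroup of $\W^\I \times \GalEK$ generated by $\omega$ and $\GalEK$, the image of $\I$ is therefore the copy $\{1\} \times \GalEK$, so $I \cong \ZZ/2\ZZ$ is generated by $\tau$.

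The next step exploits the fact that ${}^L\G = \hat{\G} \rtimes \GalEK$ is already split, so $\GalEK$ embeds naturally in ${}^L\G$ as $\{1\} \rtimes \GalEK$. This copy of $\GalEK$ is contained in $\hat{\T} \rtimes \GalEK \subseteq D_\varphi$, so we can define $\iota \colon I \to D_\varphi$ by sending the generator $\tau$ of $I$ to $\tau \in \{1\} \rtimes \GalEK$. Since $\tau$ has order $2$ in both places, $\iota$ is a homomorphism. To see that $\iota$ is a section, recall that the map $D_\varphi \to \Gal(M/\K)$ is determined by the conjugation action on $\hat{\T}$: the conjugation action of $\tau \in \{1\} \rtimes \GalEK$ on $\hat{\T} = \hat{\Ss}$ is the standard $\tau$-action inherited from ${}^L\G$, and under the identification above this coincides with the action of $\tau \in I \subset \Gal(M/\K)$ on $\hat{\T}$.

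Proposition~\ref{prop:character:unramified_equiv} then immediately gives that $D_\varphi$ is relatively unramified. There isn't really a main obstacle here: the argument is a bookkeeping exercise, and the real content is conceptual --- tameness of $\varphi$ confines the inertia of $M/\K$ to the $\GalEK$-factor, so the built-in splitting of ${}^L\G$ restricts to the required splitting of $D_\varphi$ over $I$. Notably, the element $\varphi(\ttau) = z\tau$ is \emph{not} used for the section (it may well have order greater than $2$); the point is precisely that inside $D_\varphi$ we have access to the honest order-$2$ element $\tau$ from the $\GalEK$-factor of ${}^L\G$.
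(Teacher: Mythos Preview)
Your argument is correct and is essentially the paper's own proof: identify the inertia subgroup $I$ of $\Gal(M/\K)$ with the $\GalEK$-factor, and use the canonical copy of $\GalEK$ inside $\hat{\T}\rtimes\GalEK \subset D_\varphi$ (coming from the semidirect product structure of ${}^L\G$) as the splitting required by Proposition~\ref{prop:character:unramified_equiv}(ii). The paper phrases it as a partial splitting of the larger sequence $1 \to \hat{\T} \to \hat{\T}\cdot \N_{\hat{\G}}(\hat{\T})^\tau \rtimes \GalEK \to \W^\tau \times \GalEK \to 1$ whose image lands in $D_\varphi$, but this is the same map.
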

\begin{proof}
The inertia subgroup of $\Gal(M/\K) \subset \W^\tau \times \Gal(\E/\K)$ is just the $\Gal(\E/\K)$ factor.  The obvious homomorphism $\Gal(\E/\K) \subset \W^\tau \times \Gal(\E/\K) \rightarrow \hat{\T} \cdot \N_{\hat{\G}}(\hat{\T})^\tau \rtimes \Gal(\E/\K)$ provides a partial splitting for the sequence
\[
1 \rightarrow \hat{\T} \rightarrow \hat{\T} \cdot \N_{\hat{\G}}(\hat{\T})^\tau \rtimes \Gal(\E/\K) \rightarrow \W^\tau \times \Gal(\E/\K) \rightarrow 1,
\]
and since its image lies within $D_\varphi$, this same map splits $D_\varphi \rightarrow \Gal(M/\K)$.
\end{proof}

\section{Regular characters from Langlands parameters} \label{sec:char_from_LP}

Suppose that $D$ is an relatively unramified group of type $L$ associated to a torus $\T$ that splits over a tame extension $M$ of $\K$.  
In this section we define a map
\[
\psi_D \colon P_\K(D, \T) \rightarrow \Hom(\T(\K)^0, \CC^\times)
\]
that will allow us to associate a character $\chi_\varphi$ to each $\varphi \in P_\K(D, \T)$.

\begin{lemma} \label{lem:character:T0_isom}
For any unramified extension $N/\K$, the norm map induces an isomorphism from $\T(N)^0_{\Gal(N/\K)}$ to $\T(\K)^0$.
\end{lemma}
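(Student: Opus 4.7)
The plan is to reduce the claim to a single elemental torus and then establish vanishing of the relevant Tate cohomology groups via the congruence filtration on the N\'eron model. By Theorem \ref{thm:ua_tori:elemental_decomposition}, $\T$ decomposes as a product of elemental tori $\T_{s_i}$, together with at most one $\U_1$ factor. Taking the identity component of the N\'eron model, forming coinvariants, and the norm map all commute with products of tori, so it suffices to verify the statement one factor at a time. I will focus on a single $\T_s$; the $\U_1$ case is analogous.

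Fix such a $\T_s$ and write $\TT_s$ for its N\'eron model, which is connected by Proposition \ref{prop:ua_tori:Ts_components}. Then $\T_s(N)^0 = \TT_s(\OO_N)$ and $\T_s(\K)^0 = \TT_s(\OK)$. Since $N/\K$ is unramified, $\Gal(N/\K)$ is cyclic, so Tate cohomology is periodic with period two; the claim that $\Nm$ induces an isomorphism from coinvariants to invariants is equivalent to the vanishing of $\hat{H}^0(\Gal(N/\K), \TT_s(\OO_N))$ and $\Hh^1(\Gal(N/\K), \TT_s(\OO_N))$.

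To establish this vanishing, I would filter by the congruence subgroups. The exact sequence
\[
1 \to \TT_s(\OO_N)^{0+} \to \TT_s(\OO_N) \to \overline{\TT}_s(\k_N) \to 1,
\]
together with the finer filtration on $\TT_s(\OO_N)^{0+}$, reduces the cohomology computation to that of the graded pieces. Smoothness of $\TT_s$ identifies each positive-depth graded piece with a finite-dimensional $\k_N$-vector space on which $\Gal(N/\K) = \Gal(\k_N/\k)$ acts through its action on $\k_N$; by the normal basis theorem such pieces are free over $\k[\Gal(\k_N/\k)]$, hence induced from the trivial subgroup and cohomologically trivial in every Tate degree. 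For the remaining depth-zero quotient, Lang's theorem applied to the connected algebraic group $\overline{\TT}_s$ over $\k$ gives $\Hh^1(\Gal(\k_N/\k), \overline{\TT}_s(\k_N)) = 0$, and surjectivity of the norm on $\k$-points (which gives $\hat{H}^0 = 0$) follows by applying Lang once more to the connected norm-one kernel of $\Res_{\k_N/\k}(\overline{\TT}_s)_{\k_N} \to \overline{\TT}_s$.

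The main technical subtlety is the identification of the positive-depth graded pieces with $\k_N$-vector spaces when $\T_s = \Res_{L/\K}\U_1(\E_s/L)$ has ramified splitting field $L/\K$. This should follow from standard compatibility of the Moy-Prasad filtration with Weil restriction through a tame extension, but will need to be handled carefully rather than invoked as a black box.
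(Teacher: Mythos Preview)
Your core strategy---show the relevant Tate cohomology groups vanish by filtering $\T(N)^0$ and checking the graded pieces---is exactly the paper's. The paper's execution is more streamlined in two respects, and adopting them would dissolve the subtlety you flag. First, the reduction to elemental tori is unnecessary: the argument goes through for any $\T$ once you work with the Moy--Prasad filtration directly. Second, rather than splitting into a depth-zero piece (handled by Lang) and positive-depth pieces (handled by the normal basis theorem), the paper invokes Yu's result \cite{yu:03a}*{Prop.~5.2} that every graded piece $\T(N)^r/\T(N)^{r+}$ is the group of $\k_N$-points of a \emph{connected} algebraic group over $\k$, so Lang's theorem kills all Tate cohomology uniformly. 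This avoids having to identify the positive-depth pieces explicitly as semilinear $\k_N$-vector spaces in the ramified setting. The paper also cites a lemma of Serre \cite{serre:LocalClassFieldThy}*{Lem.~3} to pass from the graded pieces to the inverse limit $\T(N)^0 = \varprojlim_r \T(N)^0/\T(N)^r$, a step you leave implicit. One small gap in your write-up: you equate the statement with ``norm induces an isomorphism from coinvariants to invariants,'' but the target is $\T(\K)^0$, so you should note that $\T(\K)^0 = (\T(N)^0)^{\Gal(N/\K)}$, which follows from smoothness of the N\'eron model and unramified descent.
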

\begin{proof}
Consider the Tate cohomology sequence for the $\Gal(N/\K)$-module $\T(N)^0$:
\[
0 \rightarrow \HT{-1}(\Gal(N/\K), \T(N)^0) \rightarrow \T(N)_{\Gal(N/\K)}^0 \rightarrow \T(\K)^0 \rightarrow \HT{0}(\Gal(N/\K), \T(N)^0) \rightarrow 0.
\]
Since the central map is precisely that induced by the norm, it suffices to prove that the outside two groups are trivial.

Note that
\[
T(N)^0 = \varprojlim_{r} T(N)^0 / T(N)^r.
\]
So by a result of Serre \cite{serre:LocalClassFieldThy}*{Lem. 3}, it suffices to prove that
$\HT{i}(\Gamma_n, T(N)^r / T(N)^{r+}) = 0$ for all $i$.  But $T(N)^r / T(N)^{r+}$
is connected \cite{yu:03a}*{Prop. 5.2} and thus has trivial cohomology by
Lang's Theorem \cite{lang:56a}.
\end{proof}

We can now construct $\psi_D$.  Since $M/\K$ is tame, $\Gal(M/\K_f)$ will be abelian, where $\K_f$ is the maximal unramified subextension of $M/\K$.  By Proposition \ref{prop:character:res_im_fixed}, we have a map
\[
P_\K(D, \T) \rightarrow \Hh^1(\K_f, \hat{\T})^{\Gal(\K_f/\K)}.
\]
The local Langlands correspondence for tori \cite{yu:09a}*{\S 7.5} defines an isomorphism
\[
\Hom(\T(\K_f), \CC^\times) \rightarrow \Hh^1(\K_f, \hat{\T}),
\]
and Lemma \ref{lem:character:T0_isom} gives an isomorphism
\[
\Hom(\T(\K)^0, \CC^\times) \rightarrow \Hom(\T(\K_f)^0_{\Gal(\K_f/\K)}, \CC^\times).
\]
Finally, restriction induces a homomorphism
\[
\Hom(\T(\K_f), \CC^\times) \rightarrow \Hom(\T(\K_f)^0, \CC^\times),
\]
and those characters fixed by $\Gal(\K_f/\K)$ are precisely those descending to a well defined homomorphism from the co-invariants $\T(\K_f)^0_{\Gal(\K_f/\K)}$.
Putting all of these together, we define $\psi_D$ as the composition
\begin{multline}
P_\K(D, \T) \rightarrow \Hh^1(\K_f, \hat{\T})^{\Gal(\K_f/\K)} \rightisoarrow \Hom(\T(\K_f), \CC^\times)^{\Gal(\K_f/\K)} \\
\rightarrow \Hom(\T(\K_f)^0_{\Gal(\K_f/\K)}, \CC^\times) \rightisoarrow \Hom(\T(\K)^0, \CC^\times).
\end{multline}

If $\varphi$ is a tame, discrete, regular Langlands parameter then by Proposition \ref{prop:character:Dphi_unram} $D_\varphi$ is relatively unramified.  Moreover, $\T$ splits over a tame extension of $\K$, so $\psi_{D_\varphi}$ exists.  For such a $\varphi$ we will denote the element $\psi_{D_\varphi}(\varphi) \in \Hom(\T(\K)^0, \CC^\times)$ by $\chi_\varphi$.

\subsection*{Depth of character} \label{sec:character:depth}

Just as we defined the depth of an element of $\Hh^1(\Weil_\K, \hat{\T})$ at the end of section \ref{sec:tori}, we can define the depth of an element of $P_\K(D, \T)$.  
\begin{definition}
Suppose $\varphi \in P_\K(D, \T)$.  Then the \emph{depth} of $\varphi$ is the infimum over $r \ge 0$ with 
\[
\ker(\varphi) \supset \Weil_\K^r.
\]
\end{definition}
Note that $\ker(\varphi)$ is well defined even though $\varphi \in P_\K(D, \T)$ is only defined up to conjugation by an element of $\hat{\T} \subseteq D$.

We can now generalize the depth preservation of the local Langlands correspondence for tori.
\begin{theorem}
Suppose that $\T$ splits over a tame extension $M$ of $\K$, and $D$ is an relatively unramified group of type $L$.  Then $\psi_D$ preserves depth.
\end{theorem}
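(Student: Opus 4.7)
The plan is to trace the depth through the four-step composition defining $\psi_D$, verifying depth preservation at each stage.

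\textbf{Restriction to $\K_f$.} Since $\K_f/\K$ is unramified, the inertia subgroup of $\Weil_{\K_f}$ coincides with that of $\Weil_\K$, and the upper-numbering ramification filtrations agree: $\Weil_\K^r \cap \Weil_{\K_f} = \Weil_{\K_f}^r$ for every $r > 0$. Because any $\varphi \in P_\K(D,\T)$ agrees with $\res_{\K_f/\K}(\varphi)$ on all of inertia (this is where depth is detected for $r > 0$), the restriction map preserves depth.

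\textbf{Local Langlands for tori.} Over $\K_f$, the group $D_{\K_f}$ is split, so $P_{\K_f}(D_{\K_f},\T_{\K_f}) \cong \Hh^1(\K_f,\hat{\T})$ and the isomorphism $\Hh^1(\K_f,\hat{\T}) \xrightarrow{\sim} \Hom(\T(\K_f),\CC^\times)$ of the LLC for tori is depth-preserving by Yu \cite{yu:09a}*{\S 7.10}. This handles the middle of the composition.

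\textbf{Passage to $\T(\K)^0$.} It remains to show that the composition
\[
\Hom(\T(\K_f),\CC^\times)^{\Gal(\K_f/\K)} \to \Hom(\T(\K_f)^0_{\Gal(\K_f/\K)},\CC^\times) \xrightarrow{\sim} \Hom(\T(\K)^0,\CC^\times)
\]
preserves depth. The depth of a character of $\T(\K_f)$ is already determined by its restriction to $\T(\K_f)^0$, since the Moy--Prasad filtration $\T(\K_f)^r$ for $r > 0$ lives inside $\T(\K_f)^0$, so the first map preserves depth. For the second map, the key is to verify that the norm map satisfies $\Nm_{\K_f/\K}\bigl(\T(\K_f)^r\bigr) = \T(\K)^r$ for every $r > 0$. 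Running the Tate cohomology argument of Lemma \ref{lem:character:T0_isom} on each filtration quotient rather than on all of $\T(\cdot)^0$, and invoking Yu \cite{yu:03a}*{Prop.~5.2} together with Lang's theorem to kill $\HT{i}\bigl(\Gal(\K_f/\K),\T(\K_f)^r/\T(\K_f)^{r+}\bigr)$, yields the required surjectivity and the identification $\T(\K)^r/\T(\K)^{r+} \cong \bigl(\T(\K_f)^r/\T(\K_f)^{r+}\bigr)_{\Gal(\K_f/\K)}$. From this, a character of $\T(\K)^0$ annihilates $\T(\K)^{r+}$ precisely when its pullback to $\T(\K_f)^0$ annihilates $\T(\K_f)^{r+}$.

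\textbf{Main obstacle.} The only non-formal step is the last: showing that the Moy--Prasad filtration on $\T(\K)^0$ is the image under the norm of that on $\T(\K_f)^0$, level by level. Everything else is bookkeeping about the ramification filtration in unramified extensions plus citation of Yu's depth theorem. If one accepts the filtered refinement of Lemma \ref{lem:character:T0_isom} given by applying Lang's theorem to each connected quotient $\T(\K_f)^r/\T(\K_f)^{r+}$, the proof assembles immediately.
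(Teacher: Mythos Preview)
Your overall strategy matches the paper's exactly: verify depth preservation at each of the four steps in the definition of $\psi_D$. Steps (1)--(3) are handled identically, and the only substantive work is in step (4), the norm map $\T(\K_f)^0 \to \T(\K)^0$.

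Here the two arguments diverge. The paper first treats the induced torus $\T = \Res_{M/\K}\Gm$ by a direct computation---$\T(\K_f) \cong (M^\times)^f$ with the norm given by multiplying coordinates, so $(1+\pi_M^r\OO_M)^f$ visibly surjects onto $1+\pi_M^r\OO_M$---and then reduces the general case to this one by embedding $\T$ into a product of such induced tori and invoking the compatibility of N\'eron models with unramified base change. Your route instead recycles the cohomological vanishing already established in the proof of Lemma~\ref{lem:character:T0_isom}: since each quotient $\T(\K_f)^r/\T(\K_f)^{r+}$ is connected (Yu) and hence has trivial Tate cohomology (Lang), the same limit argument gives $\HT{i}(\Gal(\K_f/\K),\T(\K_f)^r)=0$ for all $r>0$, whence the norm carries $\T(\K_f)^r$ onto $\T(\K)^r$. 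This is a perfectly legitimate alternative, and arguably tidier since it avoids the embedding step; it does, like the paper, implicitly use that $(\T(\K_f)^r)^{\Gal(\K_f/\K)} = \T(\K)^r$, i.e.\ that the Moy--Prasad filtration is compatible with unramified descent. You might make that identification explicit rather than leaving it inside the phrase ``filtered refinement of Lemma~\ref{lem:character:T0_isom}.''
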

\begin{proof}
We prove that each map going into the definition of $\psi_D$ preserves depth.  Let $\K_f$ be the maximal unramified subextension of $M/\K$ as in the construction of $\psi_D$.
\begin{enumerate}
\item Restricting to $\Weil_{\K_f} \subset \Weil_\K^0$ has no effect on depth since $\Weil_{\K_f}^r = \Weil_\K^r \cap \Weil_{\K_f}$.
\item The local Langlands correspondence for tori preserves depth \cite{yu:09a}*{\S 7.10}.
\item Restricting characters to $\T(\K_f)^0$ has no effect on the depth since we intersect with $\T(\K_f)^0$ in the definition of the Moy-Prasad filtration already.
\item Finally, we need to show that the norm map $\T(\K_f)^0 \rightarrow \T(\K)^0$ preserves the Moy-Prasad filtration.

Suppose first that $\T = \Res_{M/\K} \Gm$.  Then 
\[
\T(\K_f) = (M^\times)^f,
\]
and $\Gal(\K_f/\K)$ acts by permuting the coordinates.  The norm map thus just multiplies all coordinates together, which sends
\[
\T(\K_f)^r = (1 + \pi_M^r \OO_M)^f
\]
surjectively onto $\T(\K)^r = (1 + \pi_M^r \OO_M)$ for any positive integer $r$.  Since $\K_f / \K$ is unramified, we get that the Moy-Prasad filtration is preserved by the norm map on $\T$.

For a more general $\T$, we embed $\T$ into a product $\R$ of restrictions of the above form.  Since the Moy-Prasad filtration is defined as the intersection of the filtration on $\R$ with the connected N\'eron model of $\T$, our result follows from the above case and the behavior of N\'eron models under unramified base change \cite{bosch-lutkebohmert-reynaud:NeronModels}*{Prop. 10.1.3}.
\end{enumerate}
\end{proof}

\begin{corollary}
If $\varphi$ is a tame, discrete, regular Langlands parameter then $\chi_\varphi$ has depth zero.  In particular, it induces a character on the $k$-points
\[
\Tbar(\k) = \T(\K)^0 / \T(\K)^{0+}
\]
of the special fiber of the N\'eron model $\TT$.
\end{corollary}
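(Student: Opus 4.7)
The proof is essentially a one-line consequence of the preceding theorem together with the hypothesis that $\varphi$ is tame, so my plan is to unpack what ``tame'' says about the depth of $\varphi$ and then transport the conclusion across $\psi_{D_\varphi}$.

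First I would make precise the claim that $\varphi$ has depth zero on the Galois side. By hypothesis $\varphi$ factors through the quotient of $\Weil_\K$ by wild inertia, which (in the upper-numbering filtration used to define depth) is $\Weil_\K^{0+} = \bigcap_{r > 0} \Weil_\K^r$. Thus $\ker(\varphi) \supset \Weil_\K^{0+}$, which means $\ker(\varphi) \supset \Weil_\K^r$ for every $r > 0$, and so the infimum appearing in the definition of depth is $0$. I would emphasise that discreteness and regularity play no role here: tameness alone forces depth zero.

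Next I would invoke Proposition \ref{prop:character:Dphi_unram} to know that $D_\varphi$ is relatively unramified, so $\psi_{D_\varphi}$ is defined and the preceding theorem applies. Since $\psi_{D_\varphi}$ preserves depth, $\chi_\varphi = \psi_{D_\varphi}(\varphi)$ is a depth-zero character of $\T(\K)^0$. By the definition of depth for characters via the Moy--Prasad filtration, this means $\chi_\varphi$ is trivial on $\T(\K)^{0+}$, so it descends to a character of the quotient $\T(\K)^0/\T(\K)^{0+}$, which is identified with $\Tbar(\k)$ via the connected Néron model $\TT$.

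There is no real obstacle here; the only subtlety is matching conventions, namely confirming that the ``tame'' condition $\ker(\varphi) \supset$ (wild inertia) coincides with depth $\le 0$ under the indexing convention used in the definition of depth of elements of $P_\K(D, \T)$, and that trivial restriction to $\T(\K)^{0+}$ is the correct meaning of depth zero on the character side. Both are built into the Moy--Prasad / upper-numbering framework already cited in the paper, so once that is spelled out the corollary is immediate.
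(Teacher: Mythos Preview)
Your proposal is correct and follows exactly the paper's approach: the paper's proof is the single sentence ``The tameness of $\varphi$ is equivalent to $\varphi$ having depth zero,'' relying implicitly on the immediately preceding depth-preservation theorem for $\psi_{D_\varphi}$. You have simply unpacked the same argument in more detail.
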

\begin{proof}
The tameness of $\varphi$ is equivalent to $\varphi$ having depth zero.
\end{proof}

\subsection*{Regularity} \label{sec:character:regularity}

In order to prove the irreducibility of the Deligne-Lusztig representations we construct, we need to compute the stabilizer of $\chi_\varphi$ in the Weyl group of $\Tbar$.  This stabilizer will depend on the embedding of $\T$ into pure inner forms of $\G$.
\begin{lemma} \label{lem:character:Kuconj}
Let $\G'$ be pure inner form of $\G$.
\begin{enumerate}
\item There is an isomorphism $\beta \colon \G \rightarrow \G'$ defined over $\Ku$.
\item There is an element $g \in \G'(\Ku)$ with $\T(\Ku) = g\beta(\Ss(\Ku))g^{-1}$.  Moreover, the identification of $X^*(\Ss)$ with $X^*(\T)$ and $X_*(\Ss)$ with $X_*(\T)$ defined by $\beta$ and conjugation by $g$ is precisely that obtained by the construction of $\T$ as a twist of $\Ss$.
\end{enumerate}
\end{lemma}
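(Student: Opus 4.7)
The plan is to deduce both parts from the vanishing $\Hh^1(\Ku, H) = 1$ for any connected linear algebraic group $H$ over $\Ku$---an instance of Steinberg's theorem \cite{serre:GaloisCohomology}*{III.2.3}, applicable because the absolute Galois group of $\Ku$ is the inertia subgroup $\I$, which has the requisite cohomological dimension.

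For part (1), I would present $\G'$ as the twist of $\G$ by a cocycle $\xi \in \Zcycle^1(\K, \G)$. Restricting $\xi$ to $\I$ and invoking Steinberg's theorem, one obtains $h \in \G(\Ku)$ with $\xi|_\I = \delta h$; inner conjugation by $h$ then transports the $\K$-structure on $\G$ to that of $\G'$, producing a $\Ku$-isomorphism $\beta \colon \G \to \G'$.

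For part (2), by Construction \ref{con:ua_tori:T_construction} the torus $\T$ is a twist of $\Ss$ by a cocycle $\rho \in \Zcycle^1(\Gal(\Ku/\K), \W^\I)$, which lifts non-canonically to a cocycle $\tilde\rho \in \Zcycle^1(\K, \Nn)$. I would choose this lift so that its image in $\Hh^1(\K, \G)$ recovers the class of $\xi$, reflecting that $\T$ embeds into $\G'$ rather than some other pure inner form. Since $\rho|_\I$ is trivial by construction, after adjusting $\tilde\rho|_\I$ by a lift of its $\W$-coboundary to $\Nn(\Ku)$ it becomes a cocycle in $\Zcycle^1(\I, \Ss)$; a second application of Steinberg's theorem, now to the connected torus $\Ss$, trivializes that cocycle. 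Combining both trivializations yields $g' \in \Nn(\Ku)$ conjugating $\Ss_{\Ku}$ onto $\T_{\Ku}$ inside $\G_{\Ku}$, and transporting via $\beta$ produces the desired $g \in \G'(\Ku)$ with $\T(\Ku) = g \beta(\Ss(\Ku)) g^{-1}$.

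For the compatibility of the induced identifications on $X^*$ and $X_*$, the composition $\mathrm{Ad}(g) \circ \beta \colon \Ss_{\Ku} \to \T_{\Ku}$ is by construction the trivialization of $\tilde\rho$ over $\Ku$ underlying the realization of $\T$ as a twist of $\Ss$, so the two identifications of character and cocharacter lattices must coincide. The hard part will be the cocycle-level bookkeeping: the lift $\tilde\rho$ of $\rho$ is not canonical, and verifying that any such choice (once matched with $\xi$) produces precisely the identification promised by Construction \ref{con:ua_tori:T_construction} requires tracking the trivializations at the level of cocycles rather than cohomology classes.
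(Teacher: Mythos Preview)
For part (1) both you and the paper invoke Steinberg's theorem. For part (2) the paper takes a shorter, more structural route: since the splitting field $E/\K$ is totally ramified, the $\K$-rank and $\Ku$-rank of $\G$ coincide, so the maximal $\K$-split torus $\Aa \subset \Ss$ remains a maximal $\Ku$-split torus after base change. Thus $\beta(\Ss)$ is a quasi-split maximal torus of $\G'_{\Ku}$; and since $\T$ is relatively unramified, $\T_{\Ku} \cong \Ss_{\Ku}$ as abstract $\Ku$-tori, so $\T$ also contains a maximal $\Ku$-split torus of $\G'$. Quasi-splitness of $\G'_{\Ku}$ (from part (1)) then forces $\beta(\Ss)$ and $\T$ into a single $\G'(\Ku)$-conjugacy class, producing $g$ without any cocycle manipulation; the compatibility clause is referred to Proposition~\ref{prop:ua_tori:unram_classifying_set}. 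Your approach via lifting $\rho$ to $\Nn$ and trivializing restrictions to $\I$ is sound in principle and is more portable---it does not use that $E/\K$ is totally ramified---but the bookkeeping you anticipate (matching the lift $\tilde\rho$ to the given embedding $\T \hookrightarrow \G'$ and verifying that the induced identification on $X^*$ is exactly that of Construction~\ref{con:ua_tori:T_construction}) is genuine extra work that the paper's rank argument sidesteps entirely.
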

\begin{proof}
By Steinberg's theorem \cite{serre:GaloisCohomology}*{Ch. II \S 3.3 and III \S 2.3}, $\Hh^1(\Ku, \G) = 0$, and thus all inner forms of $\G$ become isomorphic (and quasi-split) over $\Ku$.  

Since $\G$ is already quasi-split over $\K$ with totally ramified splitting field, the $\K$-rank and $\Ku$-rank of $\G$ are identical.  Since $\Ss$ contains a maximal $\K$-split torus, and because $\Ss$ and $\T$ become isomorphic over $\Ku$, they both contain a $\Ku$-split torus of dimension equal to the $\Ku$-rank of $\G$, which is the same as the $\Ku$-rank of $\G'$.  Now we note that $\G'$ has a unique conjugacy class of such maximal tori over $\Ku$ since it is quasi-split over $\Ku$.

The final statement follows from Proposition \ref{prop:ua_tori:unram_classifying_set}.
\end{proof}

Conjugation by this $g$ also takes the normalizer of $\beta(\Ss)$ to the normalizer of $\T$, and thus defines an isomorphism of the Weyl group $\W_{\Ss}$ of $\Ss$ with the Weyl group $\W_{\T}$ of $\T$, as finite group schemes over $\Ku$.  Moreover, since $\T$ splits over $M$, we can choose both the element $g$ and the isomorphism of Weyl groups to be defined over $\K_f$, the maximal unramified subextension of $M$.

Since $\Tbar$ is defined as the special fiber of the N\'eron model of $\T$, the Weyl group of $\Tbar$ is naturally identified with the sub-group scheme $\W_{\T}^{\I} \subset \W_{\T}$.  As our isomorphism $\W_{\Ss} \rightisoarrow \W_{\T}$ is defined over $\Ku$, we may identify $\W_{\Ss}^{\I}$ and $\W_{\T}^{\I}$.  We define a character $\chi_\varphi' \colon \Ss(\K_f) \rightarrow \CC^\times$ by pulling $\chi_\varphi$ back to $\T(\K_f)$ using the norm (essentially moving back one step in the application of $\psi_{D_\varphi}$) and then conjugating $\T(\K_f)$ to $\Ss(\K_f)$ with $g$.  Since our identification of $\W_{\Ss}$ with $\W_{\T}$ is done via conjugation by $g$ as well, an element of $\W_{\Ss}^{\I}$ will fix $\chi_\varphi'$ if and only if the corresponding element of $\W_{\T}^{\I}$ fixes $\chi_\varphi$.  For the rest of this section we will work with $\W_{\Ss}$; write $\W$ for $\W_{\Ss}$.

In order to state the result on the stabilizer of $\chi_\varphi'$ in $\W_{\Ss}^{\I}$, we need to recall some notation from Reeder \cite{reeder:10a}.  Set $Y = X^*(\Ss)$ and $Y_\RR = Y \otimes \RR$.  Any element $\vartheta \in \Gal(E/\K)$ acts via a pinned automorphism on $\hat{\G}$.  Suppose that $\vartheta$ has order $m$, and let
\[
P_\vartheta = m^{-1}(1 + \vartheta + \cdots + \vartheta^{m-1}) \in \End(Y_\RR).
\]
Set
\[
Y_\vartheta = P_\vartheta Y,
\]
the projection of $Y$ onto $Y_\RR^\vartheta$.  We then define
\[
\widetilde{\W}_\vartheta = \W^\vartheta \ltimes Y_\vartheta.
\]
We have the exact sequence
\[
1 \rightarrow Y \rightarrow Y \otimes \CC \xrightarrow{\exp} \hat{\Ss} \rightarrow 1,
\]
and the subspace $Y_\RR^\vartheta$ maps under $\exp$ into $\hat{\Ss}^\vartheta$.  By \cite{reeder:10a}*{Lem. 3.4}, if $x, x' \in Y_\RR^\vartheta$, then the elements $\exp(x)\vartheta$ and $\exp(x')\vartheta$ of ${}^L\G$ are $\hat{\Ss}$-conjugate if and only if $x-x' \in Y_\vartheta$.  We will apply this result to the case that $\vartheta = \tau$, and note that $\W_{\Ss}^\I = \W_{\Ss}^\tau$.  Our next goal is to define an alcove $C_\tau$ in $Y_\RR^\tau$.  

Reeder denotes by $\Phi / \vartheta$ the set of $\vartheta$-equivalence classes of roots in $\Phi(\hat{\G}, \hat{\Ss})$ and for each $a \in \Phi / \vartheta$, he sets
\[
\gamma_a = \sum_{\alpha \in a} \bar{\alpha} \qquad \mbox{and} \qquad \Phi_\vartheta = \{\gamma_a \st a \in \Phi / \vartheta\},
\]
where $\bar{\alpha}$ is the restriction of the root $\alpha$ to $W^\vartheta$.

He defines $I_\vartheta$ as the set of orbits in $\{1, \ldots, l\}$ under the permutation induced by the action of $\vartheta$ on the set $\{\alpha_1, \ldots, \alpha_l\}$ of simple roots in $\Delta(\hat{\G}, \hat{\B})$, for $\iota \in I_\vartheta$ sets $a_\iota \in \Phi / \vartheta$ as the equivalence class containing $\{a_i \st i \in \iota\}$, and defines $\gamma_\iota = \gamma_{a_\iota}$.  The set $\Delta_\vartheta = \{\gamma_\iota \st \iota \in I_\vartheta\}$ is a base for the reduced root system $\Phi_\vartheta$, and he can thus define $\tilde{\gamma}_0$ as the highest root of $\Phi_\vartheta$ with respect to $\Delta_\vartheta$.  He then sets
\[
\tilde{I}_\vartheta = \{0\} \cup I_\vartheta, \qquad \mbox{and} \qquad \gamma_0 = 1 - \tilde{\gamma}_0.
\]

We can now define an alcove $C_\vartheta$ in $W^\vartheta$ by
\[
C_\vartheta = \{x \in Y_\RR^\vartheta \st \gamma_\iota > 0 \ \ \forall \iota \in \tilde{I}_\vartheta\}.
\]
There is a unique element $y$ in the closure of $C_\tau$ satisfying
\[
\varphi(\ttau) = \exp(y)\tau.
\]
Finally, we let 
\[
\W_{\varphi(\ttau)} = \N_{\hat{\G}}(\hat{\Ss})^{\varphi(\ttau)} / \hat{\Ss}^\tau
\]
be the subgroup of elements of $\W^\tau$ representable by a $\varphi(\ttau)$ fixed element of $\N_{\hat{\G}}(\hat{\Ss})$.

From the proof of \cite{reeder:10a}*{Lem. 3.9}, the projection $\widetilde{\W}_\tau \rightarrow \W^\tau$ maps the stabilizer $\widetilde{\W}_{\tau, y}$ of $y$ in $\widetilde{\W}_\tau$ isomorphically onto $\W_{\varphi(\ttau)}$.

\begin{proposition} \label{prop:character:Wfix}
\[
\{ w \in \W^\tau \st w \cdot \chi_\varphi' = \chi_\varphi'\} \subseteq \W_{\varphi(\ttau)}.
\]
\end{proposition}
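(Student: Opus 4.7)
The plan is to translate the character-theoretic hypothesis $w\cdot\chi_\varphi' = \chi_\varphi'$ into a cohomological identity via the local Langlands correspondence for tori, and then promote that identity to the commutation relation in ${}^L\G$ that defines $\W_{\varphi(\ttau)}$. From the construction of $\psi_{D_\varphi}$, the character $\chi_\varphi'$ corresponds under LLC for tori over $\K_f$ to the class in $\Hh^1(\K_f,\hat{\T}) = \Hh^1(\K_f,\hat{\Ss})$ of the 1-cocycle $c := \varphi|_{\Weil_{\K_f}}$; here $c$ is genuinely $\hat{\T}$-valued because $D_\varphi$ is relatively unramified (Proposition \ref{prop:character:Dphi_unram}), and the natural splitting $\iota\colon \Gal(E/\K) \hookrightarrow \hat{\G}\rtimes\Gal(E/\K)$ exhibited in the proof of that proposition gives $c(\ttau) = z$. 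Since LLC for tori is Weyl-equivariant, and since the identifications used to pass between $\chi_\varphi$ on $\T(\K)^0$ and $\chi_\varphi'$ on $\Ss(\K_f)$ (via the norm of Lemma \ref{lem:character:T0_isom} and conjugation by $g$ from Lemma \ref{lem:character:Kuconj}) are compatible with the identification $\W_{\Ss}^{\I} \cong \W_{\T}^{\I}$, the hypothesis $w\cdot\chi_\varphi' = \chi_\varphi'$ is equivalent to $w$ fixing $[c] \in \Hh^1(\K_f,\hat{\T})$.

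Unpacking this cohomological identity: for any lift $\dot{w} \in \N_{\hat{\G}}(\hat{\Ss})$ of $w$ there exists $s \in \hat{\T}$ with
\[
\dot{w}\, c(\sigma)\,\dot{w}^{-1} \;=\; s\, c(\sigma)\, \sigma(s)^{-1} \qquad \text{for every } \sigma \in \Weil_{\K_f}.
\]
Modifying $\dot{w}$ by an element of $\hat{\T}$ does not change the left-hand side (since $\hat{\T}$ is abelian and the conjugate lies in $\hat{\T}$), so I am free to choose a $\tau$-stable lift. For $\hat{\G} = \GL_n$ with the pinned outer involution $\tau\colon g\mapsto w_0({}^{t}g)^{-1}w_0$, the permutation matrix attached to $w$ is such a lift: $\tau(\dot{w}) = w_0\dot{w}w_0$, and $w$ commutes with $w_0$ by Proposition \ref{prop:ua_tori:WI_for_2An}.

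Specializing the cocycle relation to $\sigma = \ttau$, where $c(\ttau) = z$ and $\ttau$ acts on $\hat{\T}$ as $\tau$, yields $\dot{w}\,z\,\dot{w}^{-1} = s\,z\,\tau(s)^{-1}$. Set $n := s^{-1}\dot{w}$, another lift of $w$. Then
\[
nz \;=\; s^{-1}\dot{w}z \;=\; z\,\tau(s)^{-1}\dot{w}, \qquad z\,\tau(n) \;=\; z\,\tau(s)^{-1}\tau(\dot{w}) \;=\; z\,\tau(s)^{-1}\dot{w},
\]
using the cocycle relation and $\tau$-stability of $\dot{w}$, respectively. Hence $nz = z\,\tau(n)$, which is exactly the commutation of $n$ with $\varphi(\ttau) = z\tau$ in ${}^L\G$. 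Therefore $n \in \N_{\hat{\G}}(\hat{\Ss})^{\varphi(\ttau)}$ projects to $w$, proving $w \in \W_{\varphi(\ttau)}$. The main delicate steps are tracking Weyl-equivariance across all of the identifications (norm, transport by $g$, and LLC for tori), and the existence of a $\tau$-stable lift, which is clean here because $\hat{\G} = \GL_n$ but in greater generality would be governed by an obstruction class in $\Hh^1(\langle\tau\rangle,\hat{\T})$.
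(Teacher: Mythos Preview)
Your argument is correct and reaches the same conclusion as the paper, but by a different and more hands-on route.  Both proofs begin identically: the equivariance of the local Langlands correspondence for tori converts $w\cdot\chi_\varphi'=\chi_\varphi'$ into the statement that $w$ fixes the class of $\varphi|_{\Weil_{\K_f}}$ in $\Hh^1(\K_f,\hat{\T})$, and in particular $w\cdot\varphi(\ttau)$ is $\hat{\T}$-conjugate to $\varphi(\ttau)$.  At this point the paper invokes Reeder's machinery: Lemma~3.4 of \cite{reeder:10a} translates $\hat{\Ss}$-conjugacy of $\exp(y)\tau$ and $\exp(w\cdot y)\tau$ into $y-w\cdot y\in Y_\tau$, which places $w$ in the image of the stabilizer $\widetilde{\W}_{\tau,y}$, and then the proof of Lemma~3.9 of \cite{reeder:10a} identifies that image with $\W_{\varphi(\ttau)}$.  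You instead bypass the affine Weyl group entirely and exhibit the $\varphi(\ttau)$-fixed lift $n=s^{-1}\dot{w}$ directly, which is in effect an unpacking of the relevant part of Reeder's Lemma~3.9 in this special case.  Your approach is more elementary and self-contained; the paper's is more uniform across groups, since Reeder's lemmas hold without any case analysis.

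One small caveat: your claim that the permutation matrix $\dot w$ itself satisfies $\tau(\dot w)=\dot w$ depends on taking the pinned involution on $\GL_n$ to be $g\mapsto P_{w_0}({}^tg)^{-1}P_{w_0}$ with the sign-free antidiagonal $P_{w_0}$.  The genuinely pinned involution usually requires an antidiagonal $J$ with alternating signs (to preserve the simple root vectors), and then $\tau(\dot w)$ can differ from $\dot w$ by a sign matrix.  This does not break your proof: what you actually need is the \emph{existence} of a $\tau$-stable lift, and that follows from $\Hh^1(\langle\tau\rangle,\hat{\Ss})=0$, which is an easy direct computation for $(\CC^\times)^n$ with $\tau$ acting by invert-and-reverse.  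You already flag this obstruction class at the end, so the fix is simply to replace ``the permutation matrix is $\tau$-stable'' with ``a $\tau$-stable lift exists since $\Hh^1(\langle\tau\rangle,\hat{\Ss})$ vanishes.''
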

\begin{proof}
The local Langlands correspondence for tori is given by the following series of isomorphisms \cite{yu:09a}*{\S 7.7}:
\begin{align*}
\Hom(\T(K_f), \CC^\times) &\rightisoarrow \Hom(\T(M), \CC^\times)_{\Gal(M/K_f)} \\
&\rightisoarrow \Hom(M^\times \otimes_\ZZ X_*(\T), \CC^\times)_{\Gal(M/K_f)} \\
&\rightisoarrow \Hom(M^\times, X^*(\T) \otimes_\ZZ \CC^\times)_{\Gal(M/K_f)} \\
&\rightisoarrow \Hh^1(\Weil_M, \hat{\T})_{\Gal(M/K_f)} \\
&\rightisoarrow \Hh^1(\Weil_{K_f}, \hat{\T}).
\end{align*}

We can translate to $\Ss$ by conjugating by $g$, and then trace through the action of $\W^\tau$.  The action of $\W^\tau$ on $\Ss(K_f)$ comes from its action on $X_*(\Ss)$, and this corresponds to the standard action of $\W^\tau$ on $\hat{\Ss}$.  Since $\chi_\varphi$ maps to $\varphi$, an element $w \in \W^\tau$ will fix $\chi_\varphi$ if and only if it fixes the restriction of $\varphi$ to $\Weil_{\K_f}$.  Note that $\varphi \in \Hh^1(\Weil_{\K_f}, \hat{\T})$ is determined by $\varphi(\ttau)$ and $\varphi(\Fr^f)$.  The condition that $\varphi$ is fixed by $w$ translates to the requirement that $w \cdot \varphi(\ttau)$ is conjugate to $\varphi(\ttau)$ by some $t \in \hat{\T}$, and that $w \cdot \varphi(\Fr^f)$ is also conjugate to $\varphi(\Fr^f)$ by the same $t$.

We now invoke \cite{reeder:10a}*{Lem. 3.4} to replace the condition that $w \cdot \varphi(\ttau) = \exp(w \cdot y)\tau$ be conjugate to $\varphi(\ttau) = \exp(y)\tau$ with the requirement that
\[
y - w\cdot y \in Y_\tau.
\]

Since $\widetilde{\W}_\tau = \W^\tau \ltimes Y_\tau$, the statement that we can translate from $y$ to $w\cdot y$ by an element of $Y_\tau$ is equivalent to the statement that $y$ is be fixed by some element of $\widetilde{\W}_\tau$.  The image of this element under the projection $\widetilde{\W}_\tau \rightarrow \W^\tau$ gives us a $w \in \W^\tau$ so that $w \cdot \varphi(\ttau)$ is conjugate to $\varphi$.
Therefore any $w$ fixing $\chi_\varphi'$ must be in the image of $\Wexaff_{\tau, y}$ under the projection $\Wexaff_\tau \rightarrow \W^\tau$, which is precisely $\W_{\varphi(\ttau)}$.
\end{proof}

\section{Supercuspidal representations from Langlands parameters} \label{sec:induction:rep_const}

We may now define a complex admissible representation $\pi = \pi_{\varphi, \uk}$ of $G$ in a sequence of steps.

\begin{enumerate}
\item Since the character $\chi_\varphi$ has depth zero, it descends to a character on $\Tbar$.  Together with the torus $\Tbar \subseteq \Gbar^\circ$, this character defines a Deligne-Lusztig representation $\pi^\circ$ of $\Gbar^\circ$.
\item We obtain a representation of the parahoric subgroup $G^\circ$ via the reduction map $G^\circ \rightarrow \Gbar^\circ$; we will also call this representation $\pi^\circ$.
\item Define a representation $\pi^\flat$ on the maximal compact subgroup $G^\flat$ by a finite induction from $G^\circ$.
\item Finally, define a representation $\pi$ on all of $G$ by compact induction from $G^\flat$.
\end{enumerate}

In this section we elaborate on the different steps in this process and give conditions under which the representation at each step is irreducible.  

\subsection*{Representation of the parahoric}
The representation $\pi^\circ$ will be irreducible if and only if the only $\Fr$-invariant of the Weyl group of $\Tbar$ fixing $\chi_\varphi$ is the identity \cite{carter:93a}*{Thm. 7.3.4}, namely that $\chi_\varphi$ is in \emph{general position}.
\begin{proposition}
Suppose that $\Z_{\hat{\G}}(\varphi(\ttau)) = \hat{\Ss}^{\tau}$.  Then $\chi_\varphi$ is in general position.
\end{proposition}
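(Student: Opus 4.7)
The plan is to chain together the identifications of Weyl groups from earlier in the paper and then show that the hypothesis on $\Z_{\hat{\G}}(\varphi(\ttau))$ makes the relevant stabilizer group $\W_{\varphi(\ttau)}$ trivial outright, which is more than needed for general position.

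First I would recall that ``general position'' in the Deligne--Lusztig sense for the Deligne--Lusztig pair $(\bar{\T}, \chi_\varphi)$ inside $\bar{\G}^\circ$ means that no nontrivial element of the $\Fr$-rational Weyl group $\W(\bar{\T})^{\Fr}$ fixes $\chi_\varphi$. By the remarks preceding Proposition~\ref{prop:character:Wfix}, the Weyl group $\W(\bar{\T})$ is naturally identified with $\W_\T^\I$, and via the $\K_f$-conjugation furnished by Lemma~\ref{lem:character:Kuconj}, this is further identified with $\W_\Ss^\I = \W^\tau$ in such a way that an element fixes $\chi_\varphi$ if and only if the corresponding element fixes $\chi_\varphi'$. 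It therefore suffices to prove the stronger statement that the whole stabilizer $\{w \in \W^\tau : w \cdot \chi_\varphi' = \chi_\varphi'\}$ is trivial.

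Next I would apply Proposition~\ref{prop:character:Wfix} to bound this stabilizer inside $\W_{\varphi(\ttau)} = \N_{\hat{\G}}(\hat{\Ss})^{\varphi(\ttau)}/\hat{\Ss}^\tau$. The key observation is that any $n \in \N_{\hat{\G}}(\hat{\Ss})$ commuting with $\varphi(\ttau)$ lies in $\Z_{\hat{\G}}(\varphi(\ttau))$, and the hypothesis forces this centralizer to equal $\hat{\Ss}^\tau$. Therefore $\N_{\hat{\G}}(\hat{\Ss})^{\varphi(\ttau)} \subseteq \hat{\Ss}^\tau$, and since the reverse inclusion is obvious (because $z \in \hat{\Ss}^\tau$ implies conjugation by $\varphi(\ttau) = z\tau$ acts on $\hat{\Ss}$ as $\tau$), we get equality. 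Hence $\W_{\varphi(\ttau)}$ is trivial.

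Combining these two steps, the stabilizer of $\chi_\varphi'$ in $\W^\tau$ is trivial, so \emph{a fortiori} its $\Fr$-fixed part is trivial, and $\chi_\varphi$ is in general position. There is no real obstacle here: the hard work was already done in Proposition~\ref{prop:character:Wfix}; the only mild subtlety is being careful that ``general position'' for Deligne--Lusztig requires merely the $\Fr$-fixed stabilizer to vanish, which is automatic once one has shown the entire stabilizer is trivial.
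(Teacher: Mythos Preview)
Your proof is correct and follows essentially the same approach as the paper: both show that the hypothesis forces $\W_{\varphi(\ttau)}$ to be trivial (since any representative in $\N_{\hat{\G}}(\hat{\Ss})^{\varphi(\ttau)}$ lies in $\Z_{\hat{\G}}(\varphi(\ttau)) = \hat{\Ss}^\tau$) and then invoke Proposition~\ref{prop:character:Wfix}. Your write-up simply spells out the Weyl-group identifications and the meaning of general position that the paper leaves implicit in its reference to the notation of Section~\ref{sec:character:regularity}.
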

\begin{proof}
We use the notation of Section \ref{sec:character:regularity}.  Note that $\W_{\varphi(\ttau)}$ is trivial, since any nontrivial element would lift to an element of $\hat{\G}$ centralizing $\varphi(\ttau)$ but lying outside $\hat{\Ss}^\tau$, contradicting the assumption on $\varphi(\ttau)$.  The result now follows by Proposition \ref{prop:character:Wfix}.
\end{proof}

Inflation of $\pi^\circ$ to $G^\circ$ does not affect its irreducibility.
\subsection{Induction to the normalizer}
The induction of $\pi^\circ$ from $G^\circ$ to $G^\flat$ does not always remain irreducible, but we may pick out an irreducible factor using the central character when it does not.

We have two methods for obtaining a character on the center $Z = \Zz(\K)$.  Since $Z$ is compact and central, $Z \subset G^\flat$.  We can thus restrict $\pi^\circ$ to get a character $\epsilon$ from $Z^\circ$ to the center of a general linear group, which is isomorphic to $\CC^\times$.  On the other hand, Gross and Reeder give a recipe for the central character $\omega_\varphi \colon Z \rightarrow \CC^\times$ \cite{gross-reeder:09a}*{\S 8}.
\begin{lemma} \label{lem:induction:Z0agree}
The two characters $\epsilon$ and $\omega_\varphi$ agree on $Z^\circ$.  
\end{lemma}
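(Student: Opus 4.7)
The plan is to show that both $\epsilon$ and $\omega_\varphi$, when restricted to $Z^\circ$, coincide with the restriction of $\chi_\varphi$ itself. Since the center $\Zz$ lies inside every maximal torus of $\G$, the embedding $\T \hookrightarrow \G$ factors through $\Zz \hookrightarrow \T$, so that $Z^\circ \subseteq \T(\K)^0$ (using connected N\'eron models on both sides). Thus restricting $\chi_\varphi$ to $Z^\circ$ makes sense and gives a candidate for the common value.

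To identify $\epsilon$ with $\chi_\varphi|_{Z^\circ}$, I would invoke the standard fact about Deligne--Lusztig characters: for $z$ in the image of $Z^\circ$ inside $\Tbar \subset \Gbar^\circ$, the value $R_{\Tbar}^{\chi_\varphi}(z)$ equals $\chi_\varphi(z) \cdot \dim R_{\Tbar}^{\chi_\varphi}$ (see Carter \cite{carter:93a}*{Prop.\ 7.5.1}), so the restriction of $R_{\Tbar}^{\chi_\varphi}$ to the image of $Z^\circ$ is scalar multiplication by $\chi_\varphi|_{\bar Z}$. Since $\pi^\circ$ is the inflation of this Deligne--Lusztig representation through $G^\circ \twoheadrightarrow \Gbar^\circ$, and since $Z^\circ \subset G^\circ$ maps into $\bar Z$ under reduction, the central character $\epsilon$ on $Z^\circ$ is precisely $\chi_\varphi|_{Z^\circ}$.

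To identify $\omega_\varphi|_{Z^\circ}$ with $\chi_\varphi|_{Z^\circ}$, I would use naturality. The Gross--Reeder recipe \cite{gross-reeder:09a}*{\S 8} defines $\omega_\varphi$ by composing $\varphi$ with the projection ${}^L\G \to {}^L\Zz$ dual to $\Zz \hookrightarrow \G$ and then applying the local Langlands correspondence for the torus $\Zz$. The inclusion $\Zz \hookrightarrow \T$ induces a surjection $\hat{\T} \twoheadrightarrow \hat{\Zz}$ and hence a morphism of groups of type $L$ from $D_\varphi$ to a corresponding group $D_\varphi^{\Zz}$ associated to $\Zz$, through which $\varphi$ factors compatibly. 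It remains to verify that the four ingredients making up $\psi_{D_\varphi}$---restriction to $\Weil_{\K_f}$, local Langlands for tori, restriction of characters to $\T(\K_f)^0$, and the descent isomorphism of Lemma \ref{lem:character:T0_isom}---are all functorial with respect to the homomorphism $\Zz \hookrightarrow \T$. Each of these steps is manifestly natural in the torus, so the composition is as well, which yields $\psi_{D_\varphi}(\varphi)|_{Z^\circ} = \omega_\varphi|_{Z^\circ}$.

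The main obstacle is verifying this functoriality at the level of groups of type $L$: one must check that the map $D_\varphi \to D_\varphi^{\Zz}$ genuinely carries the class of $\varphi \in P_\K(D_\varphi, \T)$ to the class used by Gross--Reeder, which at its core comes down to the fact that after passing to the maximal unramified subextension $\K_f$ everything splits and reduces to an ordinary cohomological statement in $\Hh^1(\K_f, \hat{\T})^{\Gal(\K_f/\K)} \to \Hh^1(\K_f, \hat{\Zz})^{\Gal(\K_f/\K)}$, where naturality of Langlands for tori is well known. Combining the two identifications then gives $\epsilon = \chi_\varphi|_{Z^\circ} = \omega_\varphi|_{Z^\circ}$, as required.
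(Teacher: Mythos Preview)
Your proposal is correct and follows essentially the same approach as the paper. Both arguments identify each of $\epsilon$ and $\omega_\varphi|_{Z^\circ}$ with $\chi_\varphi|_{Z^\circ}$: the first via the Deligne--Lusztig central character formula (the paper cites Carter \S 7.2, you cite Prop.~7.5.1), and the second via compatibility of the local Langlands correspondence for tori with the inclusion $\Zz \hookrightarrow \T$ after passing to $\K_f$. The paper spells this compatibility out as two explicit commutative diagrams (one for LLC over $\K_f$ versus restriction of characters, one for LLC over $\K$ versus over $\K_f$ under the norm), whereas you package it as functoriality of the construction $\psi_D$ in the torus; these are the same content.
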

\begin{proof}
From the description of the Deligne-Lusztig representation in Carter \cite{carter:93a}*{\S 7.2}, we see that central elements $z \in \Gbar$ scale by $\chi_\varphi(z)$.  Recall the description of $\omega_\varphi$ as the image of $\varphi$ under the composition
\[
\Hh^1(\K, \hat{\G}) \rightarrow \Hh^1(\K, \hat{\Zz}) \rightarrow \Hom(Z, \CC^\times).
\]
If we instead proceed by restricting to $\Hh^1(\K_f, \hat{\G})$, projecting onto $\Hh^1(\K_f, \hat{\Zz})$, mapping to $\Hom(Z, \CC^\times)$ and then restricting to $\Hom(Z^\circ, \CC^\times)$ we will get the same character since the following diagrams commute:
\begin{enumerate}
\item
\[
\begin{tikzpicture}[font=\normalsize]
\matrix [matrix of math nodes, row sep=1cm, column sep=1.5cm]
{
|(HT)| \Hh^1(\K_f, \hat{\T}) & |(HoT)| \Hom(\T(\K_f), \CC^\times) \\
|(HZ)| \Hh^1(\K_f, \hat{\Zz}) & |(HoZ)| \Hom(\Zz(\K_f), \CC^\times) \\
};
\draw[->] (HT) -- (HoT);
\draw[->] (HT) -- (HZ);
\draw[->] (HZ) -- (HoZ);
\draw[->] (HoT) -- (HoZ);
\end{tikzpicture}
\]
where the horizontal maps are the local Langlands correspondence, the left map is induced by the quotient map $\hat{\G} \rightarrow \hat{\Zz}$ and the right map is restriction.
\item 
\[
\begin{tikzpicture}[font=\normalsize]
\matrix [matrix of math nodes, row sep=1cm, column sep=1.5cm]
{
|(H)| \Hh^1(\K, \hat{\Zz}) & |(Ho)| \Hom(\Zz(\K), \CC^\times) \\
|(Hf)| \Hh^1(\K_f, \hat{\Zz}) & |(Hof)| \Hom(\Zz(\K_f), \CC^\times) \\
};
\draw[->] (H) -- (Ho);
\draw[->] (H) -- (Hf);
\draw[->] (Ho) -- (Hof);
\draw[->] (Hf) -- (Hof);
\end{tikzpicture}
\]
where the left map is restriction and the right is induced by $\Nm : \Zz(\K_f) \rightarrow \Zz(\K)$.
\end{enumerate}
\end{proof}

With this lemma in hand, we can define the representation $\pi^\flat$ of $G^\flat$.
\begin{proposition}
There is a unique irreducible representation $\pi^\flat$ of $G^\flat$ satisfying:
\begin{enumerate}
\item $\pi^\flat$ is a sub-representation of the induction $\Ind^{G^\flat}_{G^\circ} \pi^\circ$,
\item the restriction of $\pi^\flat$ to $Z$ agrees with $\omega_\Zz$.
\end{enumerate}
\end{proposition}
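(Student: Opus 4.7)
I would apply Clifford theory to the normal subgroup $G^\circ \triangleleft G^\flat$, whose index is at most $2$ by the corollary to Theorem~\ref{thm:induction:Ured}. The existence and uniqueness of $\pi^\flat$ then break into two cases according to this index, and in the index-$2$ case into two further subcases according to whether $\Ind^{G^\flat}_{G^\circ} \pi^\circ$ remains irreducible.

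If $G^\flat = G^\circ$ the only candidate is $\pi^\flat = \pi^\circ$, so uniqueness is immediate and I would verify (ii) by extending Lemma~\ref{lem:induction:Z0agree} from $Z^\circ$ to all of $Z$: the corollary to Theorem~\ref{thm:induction:Ured} says this case forces $n$ even with every $\kappa_i$ of odd valuation, and Proposition~\ref{prop:induction:connected_center} then places the image of $Z$ inside $\Gbar^\circ$, so the equality of $\pi^\circ|_{Z^\circ}$ with $\omega_\Zz|_{Z^\circ}$ propagates across the (possibly trivial) quotient $Z/Z^\circ$. If $[G^\flat:G^\circ]=2$ and $\Ind^{G^\flat}_{G^\circ}\pi^\circ$ is irreducible, take $\pi^\flat := \Ind^{G^\flat}_{G^\circ}\pi^\circ$: each $z\in Z$ acts as a scalar, and by looking at the Mackey restriction $\pi^\flat|_{G^\circ} \cong \pi^\circ \oplus \pi^{\circ,g}$ this scalar equals $\pi^\circ(z)$, so $\pi^\flat|_Z = \pi^\circ|_Z$ and I am reduced to the same extension argument as in Case~A. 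Finally, if $\Ind^{G^\flat}_{G^\circ}\pi^\circ = \pi_1 \oplus \pi_2$ with $\pi_2 \cong \pi_1 \otimes \epsilon$ for $\epsilon$ the nontrivial character of $G^\flat/G^\circ$, then $\pi_1|_Z$ and $\pi_2|_Z$ agree with $\omega_\Zz|_{Z^\circ}$ by Lemma~\ref{lem:induction:Z0agree} and differ on $Z$ by $\epsilon|_Z$, so $\omega_\Zz|_Z$ selects a unique summand provided $\epsilon|_Z$ is nontrivial.

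\textbf{Main obstacle.} The crux is showing that whenever $\Ind^{G^\flat}_{G^\circ}\pi^\circ$ genuinely decomposes, $\epsilon|_Z$ is nontrivial, so that the central character really distinguishes the two summands. Combining the corollary to Theorem~\ref{thm:induction:Ured} with Proposition~\ref{prop:induction:connected_center}, I expect this forces the decomposing subcase to occur only for $n$ odd: when $n$ is even one has $\Z(\Gbar)\subset\Gbar^\circ$, hence $Z\subset G^\circ$, and the regularity of $\chi_\varphi$ established in Section~\ref{sec:character:regularity} together with Proposition~\ref{prop:character:Wfix} rules out any nontrivial $G^\flat/G^\circ$-stabilizer of $\pi^\circ$, so Clifford's reducibility criterion is not met and $\Ind^{G^\flat}_{G^\circ}\pi^\circ$ stays irreducible. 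For $n$ odd, $\Zz = \U_1$ has component group $\ZZ/2\ZZ$, $Z$ maps onto $G^\flat/G^\circ$, and $\epsilon|_Z$ is automatically nontrivial; a direct computation with the Gross-Reeder recipe should confirm that $\omega_\Zz$ takes the value on $Z/Z^\circ$ matching one of the two summands, completing both existence and uniqueness.
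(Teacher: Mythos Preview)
Your approach matches the paper's: Clifford theory for $G^\circ \triangleleft G^\flat$ of index at most $2$, with the even case giving an irreducible induction and the odd case splitting into two summands distinguished via a central element lying in the nontrivial coset of $G^\circ$ in $G^\flat$ (the paper invokes this element directly from Proposition~\ref{prop:induction:connected_center} rather than phrasing it through your sign character $\epsilon$, but the content is identical). The paper is in fact terser than you on why the even-case induction remains irreducible, merely citing Isaacs' dichotomy without checking the invariance hypothesis; your regularity argument via Proposition~\ref{prop:character:Wfix} points in the right direction but would still require the extra step of identifying the action of the nontrivial coset of $G^\flat/G^\circ$ on $\pi^\circ$ with that of an element of $\W^\tau$ on $\chi_\varphi$.
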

\begin{proof}
There are two cases.  Assume first that $\G$ is an even unitary group.  Then the induction $\Ind^{G^\flat}_{G^\circ} \pi^\circ$ remains irreducible \cite{isaacs:CharacterTheoryOfFiniteGroups}*{Thm. 6.11}.  By Lemma \ref{lem:induction:Z0agree} and Proposition \ref{prop:induction:connected_center}, the induction satisfies the second property.  So we may set

\[
\pi^\flat = \Ind^{G^\flat}_{G^\circ} \pi^\circ.
\]

For odd unitary groups, the induction has two irreducible sub-representations.  By Proposition \ref{prop:induction:connected_center}, there is a central element $z$ lying in the nontrivial coset of $G^\circ \subset G^\flat$.  The two irreducible representations in the induction will take different values on $z$, and thus exactly one of these will satisfy the second requirement.  On the other hand, Lemma \ref{lem:induction:Z0agree} guarantees that both pieces of the induction will agree with $\omega_\Zz$ on $Z^\circ$, so our chosen sub-representation will satisfy both desired properties.
\end{proof}
\subsection*{Compact induction}
The representation $\pi^\flat$ acts on a finite dimensional $\CC$-vector space.  We now define
\[
\pi = \ind_{G^\flat}^G \pi^\flat.
\]
By a theorem of Moy and Prasad, $\pi$ is irreducible and supercuspidal \cite{moy-prasad:96a}*{\S 6.3, Prop. 6.6}.

\subsection*{\texorpdfstring{$L$-packets}{L-packets}} \label{sec:induction:Lpacketsize}

Let $\G = \U(V)$ as normal, and set $n=\dim V$.  The $L$-packet $\Pi_\varphi$ associated to a tame, discrete, regular Langlands parameter $\varphi$ for $\G$ consists of the representations $\pi$ constructed in the previous section, parameterized by the embeddings $\rho \colon \T \hookrightarrow \G'$ as $\G'$ ranges over the pure inner forms of $\G$.  The parameter $\varphi$ controls the size of $\Pi_\varphi$ as follows.

\begin{proposition}
Let $j$ be the number of cycles in the permutation obtained by projecting $\varphi(\Fr)$ onto the Weyl group of the unique maximal torus containing $\varphi(\ttau)$.  Then there are $2^j$ representations in $\Pi_\varphi$.
\end{proposition}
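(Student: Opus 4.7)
The plan is to parameterize the representations in $\Pi_\varphi$ by tuples of local norm classes and show the resulting set has cardinality $2^j$.

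First, I would decompose the torus $\T$ built in Construction \ref{con:ua_tori:T_construction} via Theorem \ref{thm:ua_tori:elemental_decomposition}. Write $\omega$ for the image of $\varphi(\Fr)$ in $\W \cong \Perm_n$, so that by hypothesis $j$ is the number of cycles of $\omega$. Since $\varphi$ is discrete, $\T$ is anisotropic, and the anisotropy criterion in Section \ref{sec:tori} forces every cycle of $\omega$ on $\{\pm 1,\ldots,\pm m\}$ to be negative in the sense of Definition \ref{def:ua_tori:positive_cycle}. For even $n = 2m$ this produces $j$ negative cycles and $j$ factors $\T_{s_i}$; for odd $n = 2m+1$ the index $0$ is fixed by $\omega \in \W^\I$, contributing both an extra $1$-cycle and the $\U_1$ factor, while the remaining $j-1$ cycles are negative and yield the $\T_{s_i}$. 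In either case $\T$ has exactly $j$ factors.

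Next, the constructions of Section \ref{sec:embedding_tori} associate to each tuple $\uk = (\kappa_1,\ldots,\kappa_j)$ with $\kappa_i \in L_i^\times$ (and $L_j = \K$ when $s_j = 1$) an embedding $\T \hookrightarrow \G_{\us,\uk}$ into a pure inner form of $\G$. Multiplication by $\alpha \in \E_{s_i}^\times$ gives an isometry $V_{s_i,\kappa_i} \to V_{s_i, \Nm(\alpha)\kappa_i}$ conjugating the two embeddings, so the cosets in $L_i^\times / \Nm_{\E_{s_i}/L_i}(\E_{s_i}^\times)$ (respectively $\K^\times / \Nm_{\E/\K}(\E^\times)$ for the $\U_1$ factor) classify the embeddings up to $\G_{\us,\uk}(\K)$-conjugacy. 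By local class field theory each such quotient has order $2$, giving a total of $\prod_{i=1}^j 2 = 2^j$ cosets.

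Finally, I would verify that these $2^j$ cosets distribute correctly across pure inner forms. For even $n$, Proposition \ref{prop:induction:ukform} shows $\G_{\us,\uk}$ is quasi-split exactly when $\sum_i v_{L_i}(\kappa_i) \equiv 0 \pmod 2$, splitting the $2^j$ cosets evenly into $2^{j-1}$ embeddings per inner form. For odd $n$, Proposition \ref{prop:background:Uclass} says all odd unitary groups of rank $n$ are isomorphic, so the unique pure inner form receives all $2^j$ embeddings. Summed across pure inner forms this gives $2^j$ distinct pairs $(\T,\rho)$, hence $2^j$ representations in $\Pi_\varphi$. The main obstacle I anticipate is showing that distinct cosets of $\uk$ yield non-isomorphic supercuspidals after the Deligne-Lusztig step and compact induction of Section \ref{sec:induction:rep_const}; I would address this using the uniqueness of the vertex fixed by $\T_{\us,\uk}(\K)$ from Theorem \ref{thm:induction:unique_T_fix_vertex} together with the regularity of $\chi_\varphi$ furnished by Proposition \ref{prop:character:Wfix}.
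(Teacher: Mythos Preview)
Your approach is essentially identical to the paper's: both identify the $j$ factors of $\T$ via the cycle decomposition of $\omega$ using Theorem \ref{thm:ua_tori:elemental_decomposition}, then count norm-class tuples $\uk$ using the fact that each $L_i^\times/\Nm_{\E_{s_i}/L_i}(\E_{s_i}^\times)$ has order $2$. Your added discussion of how the $2^j$ classes split across pure inner forms and your closing remark about distinguishing the resulting supercuspidals go beyond what the paper's proof actually argues---the paper simply asserts that $\pi$ depends only on $\kappa_i$ modulo norms and stops there.
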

\begin{proof}
Note that the action of Frobenius on $\T$ is defined by $\varphi(\Fr)$, and the decomposition in the proof of Theorem \ref{thm:ua_tori:elemental_decomposition} is determined by the cycles in that action on $X^*(\T)$.  When $n$ is even, $j$ is the number of tori $\T_{s_i}$ in the decomposition of Theorem \ref{thm:ua_tori:elemental_decomposition}.  For odd $n$, the $\U_1$ factor adds $1$ to this total.

For each $\T_{s_i}$, the representation $\pi$ associated to the embedding determined by the tuple $\uk$ depends only on the choice of $\kappa_i$ modulo $\Nm_{E_i/L_i} E_i^\times$.  Since $L_i^\times / \Nm_{E_i/L_i} E_i^\times$ has order 2, the result follows.
\end{proof}

Write $\omega$ for the image of $\varphi(\Fr)$ in $\W^\I$.  When $V$ has dimension $2m$, the smallest $L$-packets, of cardinality 2, occur when $\omega$ is a Coxeter element.  The largest, of size $2^m$, occur when $\omega$ is a product of $m$ commuting transpositions.  For any $L$-packet, each embedding determines a vertex of $\BB(\G)$ stabilized by the image of $\T(\K)$ in $\G'(\K)$.  Up to conjugacy within $\G'(\K)$ each embedding is determined by the choice of even or odd valuation for each $\kappa_i$, and one can pick out the type of the stabilized vertex in the tables of Figure \ref{fig:background:Ureductions} using Theorem \ref{thm:induction:Ured} and Proposition \ref{prop:induction:ukform}.

Similar results hold when $V$ has dimension $2m+1$: the smallest $L$-packets have cardinality 4 and occur when $\omega$ is a Coxeter element.  The largest have size $2^m$.

\appendix
\section{The Local Index} \label{app:loc_indx}

Tits describes \cite{tits:79a}*{\S 1.11} the local index, a visual tool used to classify simple groups over $\K$ \cite{tits:79a}*{\S 4}, to understand the geometry of the apartment \cite{tits:79a}*{\S 1.8} and to find the reduction of models associated to vertices in the building.  In this appendix we reproduce the local indices of unitary groups.  The local index of $\G$ is:
\begin{enumerate}
\item the extended Dynkin diagram $\DynKu$ of $\G$ over $\Ku$,
\item the action of $\GalKuK$ on $\DynKu$.
\end{enumerate}
This data determines the extended Dynkin diagram $\Dyn$ of $\G$ over $\K$ according to an algorithm described by Tits.  In particular, there is a bijection between vertices $v$ of $\Dyn$ and orbits $O(v)$ for the action of $\GalKuK$ on $\DynKu$.  In Figure \ref{fig:background:Ureductions} we give the local indices for unitary groups associated to both unramified and tamely ramified $E/\K$.  The lower diagram is $\Dyn$ and the upper is $\DynKu$; the vertices of $O(v)$ are placed vertically above $v$.  In the case that $\GalKuK$ acts trivially on $\DynKu$, the two diagrams are the same and the upper is omitted.  The hyperspecial vertices are denoted \begin{tikzpicture} \node [hsvert] {}; \end{tikzpicture} and the other special vertices are denoted \begin{tikzpicture} \node [svert] {}; \end{tikzpicture}.  Thick lines are used when one simple root is a negative multiple of the other.  As normal, arrows point toward the shorter root if there is a difference in length.  

{\footnotesize
\begin{figure}
\begin{tabular}{cccccc}

\phantom{hihihello}&\textbf{$E / \K$ unramified} &\phantom{middle1}&\phantom{middle2}& \textbf{$E/\K$ ramified} &\phantom{byebyebye} \\
\noalign{\vspace{.4cm}}
\multicolumn{6}{c}{quasi-split $\U_3(E/\K)$} \\
\multicolumn{6}{c}{\begin{tikzpicture}[x=.5cm,y=.25cm]
\matrix[row sep=.4cm,column sep=3cm, ampersand replacement=\&]
{
\node (ua0) at (-1,0) [uvert] {};
\node (ua1) at (1,1) [uvert] {};
\node (ua2) at (1,-1) [uvert] {};
\draw (ua0) -- (ua1) .. controls (1.6,1.3) and (1.6,-1.3) .. (ua2) -- (ua0); \& \\
\node (a0) at (-1,0) [hsvert] {};
\node (a0L) at (-1.16,-1) [anchor=north east] {\phantom{$\U_1\times$}$\U_3$};
\draw [Lline] (a0L.north east) -- (a0);
\node (a1) at (1,0) [sxvert] {};
\node (a1L) at (1.16,-1) [anchor=north west] {$\U_1 \times \U_2$};
\draw [Lline] (a1L.north west) -- (a1);
\draw [tline] (a0) -- (a1); \& 
\node (b0) at (-1,0) [svert] {};
\node (b0L) at (-1.16,-1) [anchor=north east] {\phantom{$\Sp_2\times$}$\Orth_3$};
\draw [Lline] (b0L.north east) -- (b0);
\node (b1) at (1,0) [svert] {};
\node (b1L) at (1.16,-1) [anchor=north west] {$\Orth_1 \times \Sp_2$};
\draw [Lline] (b1L.north west) -- (b1);
\draw [tline] (b0) -- (b1); \\
};
\end{tikzpicture}}\\
\multicolumn{6}{c}{quasi-split $\U_{2m+1}(E/\K)$} \\
\multicolumn{6}{c}{
\begin{tikzpicture}[x=.5cm,y=.25cm]
\matrix[row sep=.4cm,column sep=0cm,ampersand replacement=\&]
{
\node (uc0) at (-5,0) [uvert] {};
\node (uc1) at (-3,1) [uvert] {};
\node (uc2) at (-1,1) [uvert] {};
\node (uc3) at (1,1) [uvert] {};
\node (uc4) at (3,1) [uvert] {};
\node (uc5a) at (5,1) [uvert] {};
\node (uc5b) at (5,-1) [uvert] {};
\node (uc6) at (3,-1) [uvert] {};
\node (uc7) at (1,-1) [uvert] {};
\node (uc8) at (-1,-1) [uvert] {};
\node (uc9) at (-3,-1) [uvert] {};
\draw (uc2) -- (uc1) -- (uc0) -- (uc9) -- (uc8) (uc3) -- (uc4) -- (uc5a) .. controls (5.6, 1) and (5.6, -1) .. (uc5b) -- (uc6) -- (uc7);
\draw [dashline] (uc2) -- (uc3) (uc7) -- (uc8); \& \\
\node (c0) at (-5,0) [hsvert] {};
\node (c0L) at (-5.83,-5) {$\U_{2m+1}$} edge [Lline] (c0);
\node (c1) at (-3,0) [lvert] {};
\node (c1L) at (-3.42,-2.5) {$\U_{2m-1}\times\U_2$} edge [Lline] (c1);
\node (c2) at (-1,0) [lvert] {};
\node (c2L) at (-1.83,-5) {$\U_{2m-3}\times\U_4$} edge [Lline] (c2);
\node (c3) at (1,0) [lvert] {};
\node (c3L) at (.58,-2.5) {$\U_5\times\U_{2m-4}$} edge [Lline] (c3);
\node (c4) at (3,0) [lvert] {};
\node (c4L) at (2.17,-5) {$\U_3\times\U_{2m-2}$} edge [Lline] (c4);
\node (c5) at (5,0) [sxvert] {};
\node (c5L) at (4.58,-2.5) {$\U_1\times\U_{2m}$} edge [Lline] (c5);
\draw [dline] (c0) -- (c1);
\draw (c1) -- (c2) (c3) -- (c4);
\draw [dashline] (c2) -- (c3);
\draw [dline] (c4) -- (c5); \&
\node (d0) at (-5,0) [svert] {};
\node (d0L) at (-5.83,-5) {$\Orth_{2m+1}$} edge [Lline] (d0);
\node (d1) at (-3,0) [lvert] {};
\node (d1L) at (-3.42,-2.5) {$\Orth_{2m-1}\times\Sp_2$} edge [Lline] (d1);
\node (d2) at (-1,0) [lvert] {};
\node (d2L) at (-1.83,-5) {$\Orth_{2m-3}\times\Sp_4$} edge [Lline] (d2);
\node (d3) at (1,0) [lvert] {};
\node (d3L) at (.58,-2.5) {$\Orth_5\times\Sp_{2m-4}$} edge [Lline] (d3);
\node (d4) at (3,0) [lvert] {};
\node (d4L) at (2.17,-5) {$\Orth_3\times\Sp_{2m-2}$} edge [Lline] (d4);
\node (d5) at (5,0) [svert] {};
\node (d5L) at (4.58,-2.5) {$\Orth_1\times\Sp_{2m}$} edge [Lline] (d5);
\draw [dline] (d0) -- (d1);
\draw (d1) -- (d2) (d3) -- (d4);
\draw [dashline] (d2) -- (d3);
\draw [dline] (d4) -- (d5); \\
};
\end{tikzpicture}}\\
\multicolumn{6}{c}{quasi-split $\U_4(E/\K)$} \\
\multicolumn{6}{c}{
\begin{tikzpicture}[x=.5cm,y=.25cm]
\matrix[row sep=.4cm,column sep=3cm,ampersand replacement=\&]
{
\node (ue0) at (-2,0) [uvert] {};
\node (ue1) at (0,1) [uvert] {};
\node (ue2) at (2,0) [uvert] {};
\node (ue3) at (0,-1) [uvert] {};
\draw (ue0) -- (ue1) -- (ue2) -- (ue3) -- (ue0); \& \\
\node (e0) at (-2,0) [hsvert] {};
\node (e0L) at (-2.16,-1) [anchor=north east] {$\U_4$};
\draw [Lline] (e0L.north east) -- (e0);
\node (e1) at (0,0) [lvert] {};
\node (e1L) at (0,-1) [anchor=north] {$\U_2 \times \U_2$};
\draw [Lline] (e1L.north) -- (e1);
\node (e2) at (2,0) [hsvert] {};
\node (e2L) at (2.16,-1) [anchor=north west] {$\U_4$};
\draw [Lline] (e2L.north west) -- (e2);
\draw [dline] (e0) -- (e1);
\draw [dline] (e2) -- (e1); \&
\node (f0) at (-2,0) [svert] {};
\node (f0L) at (-2.16,-1) [anchor=north east] {$\Sp_4$};
\draw [Lline] (f0L.north east) -- (f0);
\node (f1) at (0,0) [lvert] {};
\node (f1L) at (0,-1) [anchor=north] {$\Orth_4$};
\draw [Lline] (f1L.north) -- (f1);
\node (f2) at (2,0) [svert] {};
\node (f2L) at (2.16,-1) [anchor=north west] {$\Sp_4$};
\draw [Lline] (f2L.north west) -- (f2);
\draw [dline] (f1) -- (f0);
\draw [dline] (f1) -- (f2); \\
};
\end{tikzpicture}}\\
\multicolumn{6}{c}{quasi-split $\U_{2m}(E/\K)$} \\
\multicolumn{6}{c}{
\begin{tikzpicture}[x=.5cm,y=.25cm]
\matrix[row sep=-.125cm,column sep=0cm,ampersand replacement=\&]
{
\node (ug0) at (-5,0) [uvert] {};
\node (ug1) at (-3,1) [uvert] {};
\node (ug2) at (-1,1) [uvert] {};
\node (ug3) at (1,1) [uvert] {};
\node (ug4) at (3,1) [uvert] {};
\node (ug5) at (5,0) [uvert] {};
\node (ug6) at (3,-1) [uvert] {};
\node (ug7) at (1,-1) [uvert] {};
\node (ug8) at (-1,-1) [uvert] {};
\node (ug9) at (-3,-1) [uvert] {};
\draw (ug2) -- (ug1) -- (ug0) -- (ug9) -- (ug8) (ug3) -- (ug4) -- (ug5) -- (ug6) -- (ug7);
\draw [dashline] (ug2) -- (ug3) (ug7) -- (ug8); \& \\
\node (g0) at (-5,0) [hsvert] {};
\node (g0L) at (-5.83,-5) {$\U_{2m}$} edge [Lline] (g0);
\node (g1) at (-3,0) [lvert] {};
\node (g1L) at (-3.42,-2.5) {$\U_{2m-2}\times\U_2$} edge [Lline] (g1);
\node (g2) at (-1,0) [lvert] {};
\node (g2L) at (-1.83,-5) {$\U_{2m-4}\times\U_4$} edge [Lline] (g2);
\node (g3) at (1,0) [lvert] {};
\node (g3L) at (.58,-2.5) {$\U_4\times\U_{2m-4}$} edge [Lline] (g3);
\node (g4) at (3,0) [lvert] {};
\node (g4L) at (2.17,-5) {$\U_2\times\U_{2m-2}$} edge [Lline] (g4);
\node (g5) at (5,0) [hsvert] {};
\node (g5L) at (4.58,-2.5) {$\U_{2m}$} edge [Lline] (g5);
\draw [dline] (g0) -- (g1);
\draw (g1) -- (g2) (g3) -- (g4);
\draw [dashline] (g2) -- (g3);
\draw [dline] (g5) -- (g4); \&
\node (h0) at (-5,0) [lvert] {};
\node (h0L) at (-5.83,-5) {$\Orth_{2m}$} edge [Lline] (h0);
\node (h1) at (-3,0) [lvert] {};
\node (h1L) at (-3.42,-2.5) {$\Orth_{2m-2}\times\Sp_2$} edge [Lline] (h1);
\node (h2) at (-1,0) [lvert] {};
\node (h2L) at (-1.83,-5) {$\Orth_{2m-4}\times\Sp_4$} edge [Lline] (h2);
\node (h3) at (1,0) [lvert] {};
\node (h3L) at (.42,-2.5) {$\Orth_6\times\Sp_{2m-6}$} edge [Lline] (h3);
\node (h4) at (3,0) [lvert] {};
\node (h4L) at (2.17,-5) {$\Orth_4\times\Sp_{2m-4}$} edge [Lline] (h4);
\node (h5) at (5,2) [svert] {};
\node (h6) at (5,-2) [svert] {};
\node (h5L) at (6.5,0) {$\Sp_{2m}$} edge [Lline] (h5) edge [Lline] (h6);
\draw [dline] (h0) -- (h1);
\draw [dashline] (h2) -- (h3);
\draw (h1) -- (h2) (h3) -- (h4) -- (h5) -- (h4) -- (h6); \\
};
\end{tikzpicture}}\\
\multicolumn{6}{c}{non-quasi-split $\U_4(E/\K)$} \\
\multicolumn{6}{c}{
\begin{tikzpicture}[x=.5cm,y=.25cm]
\matrix[row sep=.4cm,column sep=3cm,ampersand replacement=\&]
{
\node (ui0) at (-1,1) [uvert] {};
\node (ui1) at (1,1) [uvert] {};
\node (ui2) at (1,-1) [uvert] {};
\node (ui3) at (-1,-1) [uvert] {};
\draw (ui0) -- (ui1) .. controls (1.6,1) and (1.6,-1) .. (ui2) -- (ui3) .. controls (-1.6,-1) and (-1.6,1) .. (ui0); \& 
\node (uj0) at (-1,0) [uvert] {};
\node (uj1) at (1,1) [uvert] {};
\node (uj2) at (1,-1) [uvert] {};
\draw [dline] (uj0) -- (uj1);
\draw [dline] (uj0) -- (uj2); \\
\node (i0) at (-1,0) [sxvert] {};
\node (i0L) at (-1.16,-1) [anchor=north east] {$\U_3\times\U_1$};
\draw [Lline] (i0L.north east) -- (i0);
\node (i1) at (1,0) [sxvert] {};
\node (i1L) at (1.16,-1) [anchor=north west] {$\U_1 \times \U_3$};
\draw [Lline] (i1L.north west) -- (i1);
\draw [ttline] (i0) -- (i1); \& 
\node (j0) at (-1,0) [svert] {};
\node (j0L) at (-1.16,-1) [anchor=north east] {\phantom{$\Sp_2\times$}$\Orth_4'$};
\draw [Lline] (j0L.north east) -- (j0);
\node (j1) at (1,0) [svert] {};
\node (j1L) at (1.16,-1) [anchor=north west] {$\Orth_2' \times \Sp_2$};
\draw [Lline] (j1L.north west) -- (j1);
\draw [tline] (j0) -- (j1); \\
};
\end{tikzpicture}}\\
\multicolumn{6}{c}{non-quasi-split $\U_{2m}(E/\K)$} \\
\multicolumn{6}{c}{
\begin{tikzpicture}[x=.5cm,y=.25cm]
\matrix[row sep=.4cm,column sep=0cm,ampersand replacement=\&]
{
\node (uk0a) at (-5,1) [uvert] {};
\node (uk1) at (-3,1) [uvert] {};
\node (uk2) at (-1,1) [uvert] {};
\node (uk3) at (1,1) [uvert] {};
\node (uk4) at (3,1) [uvert] {};
\node (uk5a) at (5,1) [uvert] {};
\node (uk5b) at (5,-1) [uvert] {};
\node (uk6) at (3,-1) [uvert] {};
\node (uk7) at (1,-1) [uvert] {};
\node (uk8) at (-1,-1) [uvert] {};
\node (uk9) at (-3,-1) [uvert] {};
\node (uk0b) at (-5,-1) [uvert] {};
\draw (uk2) -- (uk1) -- (uk0a) .. controls (-5.6,1) and (-5.6,-1) .. (uk0b) -- (uk9) -- (uk8) (uk3) -- (uk4) -- (uk5a) .. controls (5.6,1) and (5.6,-1) .. (uk5b) -- (uk6) -- (uk7);
\draw [dashline] (uk2) -- (uk3) (uk7) -- (uk8); \& 
\node (ul0) at (-5,0) [uvert] {};
\node (ul1) at (-3,0) [uvert] {};
\node (ul2) at (-1,0) [uvert] {};
\node (ul3) at (1,0) [uvert] {};
\node (ul4) at (3,0) [uvert] {};
\node (ul5) at (5,1) [uvert] {};
\node (ul6) at (5,-1) [uvert] {};
\draw [dline] (ul0) -- (ul1);
\draw (ul1) -- (ul2) (ul3) -- (ul4) -- (ul5) -- (ul4) -- (ul6);
\draw [dashline] (ul2) -- (ul3); \\
\node (k0) at (-5,0) [svert] {};
\node (k0L) at (-5.83,-5) {$\U_{2m-1}\times\U_1$} edge [Lline] (k0);
\node (k1) at (-3,0) [lvert] {};
\node (k1L) at (-3.42,-2.5) {$\U_{2m-3}\times\U_3$} edge [Lline] (k1);
\node (k2) at (-1,0) [lvert] {};
\node (k2L) at (-1.83,-5) {$\U_{2m-5}\times\U_5$} edge [Lline] (k2);
\node (k3) at (1,0) [lvert] {};
\node (k3L) at (.58,-2.5) {$\U_5\times\U_{2m-5}$} edge [Lline] (k3);
\node (k4) at (3,0) [lvert] {};
\node (k4L) at (2.17,-5) {$\U_3\times\U_{2m-3}$} edge [Lline] (k4);
\node (k5) at (5,0) [svert] {};
\node (k5L) at (4.58,-2.5) {$\U_1\times\U_{2m-1}$} edge [Lline] (k5);
\draw [dline] (k1) -- (k0);
\draw (k1) -- (k2) (k3) -- (k4);
\draw [dashline] (k2) -- (k3);
\draw [dline] (k4) -- (k5); \& 
\node (l0) at (-5,0) [svert] {};
\node (l0L) at (-5.83,-5) {$\Orth_{2m}'$} edge [Lline] (l0);
\node (l1) at (-3,0) [lvert] {};
\node (l1L) at (-3.42,-2.5) {$\Orth_{2m-2}'\times\Sp_2$} edge [Lline] (l1);
\node (l2) at (-1,0) [lvert] {};
\node (l2L) at (-1.83,-5) {$\Orth_{2m-4}'\times\Sp_4$} edge [Lline] (l2);
\node (l3) at (1,0) [lvert] {};
\node (l3L) at (.58,-2.5) {$\Orth_6'\times\Sp_{2m-6}$} edge [Lline] (l3);
\node (l4) at (3,0) [lvert] {};
\node (l4L) at (2.17,-5) {$\Orth_4'\times\Sp_{2m-4}$} edge [Lline] (l4);
\node (l5) at (5,0) [svert] {};
\node (l5L) at (4.58,-2.5) {$\Orth_2'\times\Sp_{2m-2}$} edge [Lline] (l5);
\draw [dline] (l0) -- (l1);
\draw (l1) -- (l2) (l3) -- (l4);
\draw [dashline] (l2) -- (l3);
\draw [dline] (l4) -- (l5); \\
};
\end{tikzpicture}}\\
\end{tabular}
\caption{Local Indices and Reductions of Unitary Groups} \label{fig:background:Ureductions}
\end{figure}
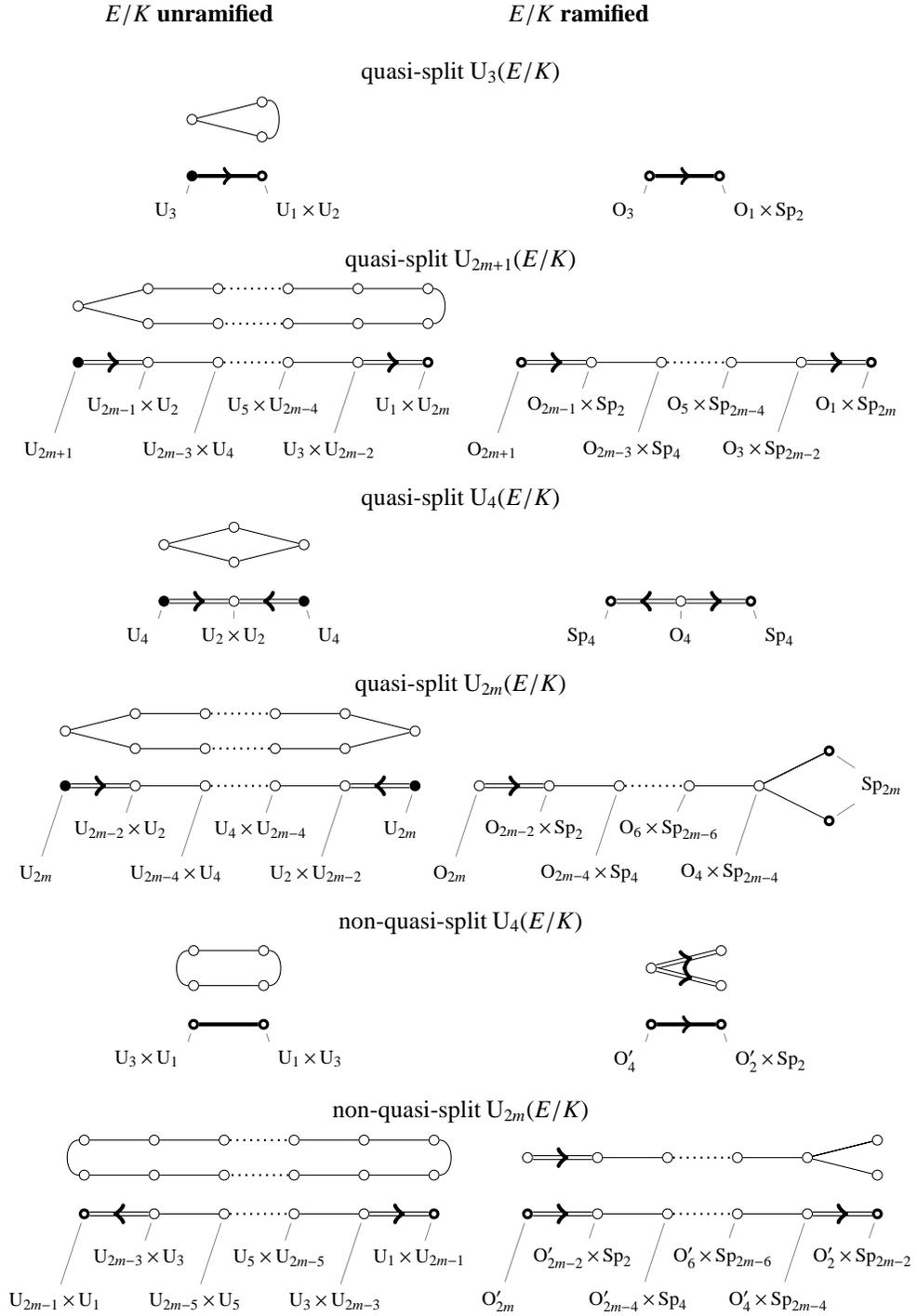}

In addition to the diagrams, Figure \ref{fig:background:Ureductions} also gives the groups $\fbar{x}$ for each vertex $x$ in the closure of the fundamental alcove.  Tits gives a detailed description of how these reductions are derived \cite{tits:79a}*{\S 2.10, \S 3.11} in the case of odd quasi-split unitary groups.  For even groups, the analogous results can be determined using \cite{tits:79a}*{3.5.1}; we will also argue more directly in the proof of Theorem \ref{thm:induction:Ured}.  See also Johnson \cite{johnson:68a} for a discussion of lattices in Hermitian spaces.  We denote by $\Orth_n$ the split orthogonal group over $\k$, and by $\Orth_n'$ the non-split orthogonal group.

\bibliography{bibliography/Biblio}

\end{document}